\documentclass[12pt]{article}

\usepackage[utf8]{inputenc}
\usepackage{xcolor}
\usepackage{amsfonts}
\usepackage{amsmath}
\usepackage{amsthm}
\usepackage{thm-restate}
\usepackage{enumitem}
\usepackage{comment}
\usepackage{hyperref}
\usepackage{float}
\usepackage[parfill]{parskip}
\usepackage{geometry}\geometry{margin=1in}

\declaretheorem[name = Theorem, numberwithin = section]{theorem}
\declaretheorem[name =  Lemma, numberwithin = section]{lemma}
\declaretheorem[name = Proposition, numberwithin = section]{proposition}

\declaretheorem[name = Remark, numberwithin = section, style = remark]{remark}
\declaretheorem[name =  Definition, numberwithin = section, style = definition]{definition}
\numberwithin{equation}{section}
\newcommand{\Secref}[1]{Section~\ref{#1}}

\DeclareMathOperator{\tr}{tr}
\let\div\relax
\DeclareMathOperator{\div}{div}
\DeclareMathOperator{\Ric}{Ric}
\DeclareMathOperator{\Scal}{Scal}
\DeclareMathOperator{\Hess}{Hess}
\DeclareMathOperator{\M}{\mathcal{M}}

\newcommand{\LB}{[\![}
\newcommand{\RB}{]\!]}
\newcommand{\ssubset}{\subset\joinrel\subset}
\newcommand{\mres}{\mathbin{\vrule height 1.6ex depth 0pt width
0.13ex\vrule height 0.13ex depth 0pt width 1.3ex}}

\title{On the existence and properties of solutions of the generalized Jang equation with respect to asymptotically anti-de Sitter initial data}
\author{Benjamin Meco \\ Uppsala University \\ \href{mailto:benjamin.meco@math.uu.se}{benjamin.meco@math.uu.se}}
\date{}

\begin{document}

\maketitle

\begin{abstract}
    We provide a rigorous analysis of the generalized Jang equation in the asymptotically anti-de Sitter setting modelled on constant time slices of anti-de Sitter spacetimes in dimensions $3\leq n \leq 7$ for a very general class of asymptotics. Potential applications to spacetime positive mass theorems for asymptotically anti-de Sitter initial data sets are discussed.
\end{abstract}




\section{Introduction}
\label{sec_Introduction}
In Mathematical General Relativity, initial data sets are triples $(M,g,K)$ where $(M,g)$ is a Riemannian manifold and $K$ is a symmetric $2$-tensor. An initial data set $(M,g,K)$ models a slice of a spacetime $(N,h)$ such that the induced metric is $g$ and the second fundamental is $K$. Roughly speaking, an initial data set can be described as
\begin{itemize}
    \item \textit{asymptotically Euclidean} if it is asymptotic to the $\{t = 0\}$ slice of Minkowski spacetime which may be seen as the initial data set $(\mathbb R^n,\delta, 0)$,
    \item \textit{asymptotically hyperbolic} or \textit{hyperboloidal} if it is asymptotic to the upper unit hyperboloid in Minkowski spacetime which may be seen as the initial data set $(\mathbb H^n,b,b)$, where $\mathbb H^n$ is hyperbolic space and $b$ is the standard hyperbolic metric,
    \item \textit{asymptotically anti-de Sitter} if it is asymptotic to the $\{t = 0\}$ slice of anti-de Sitter spacetime which may be seen as the initial data set $(\mathbb H^n,b,0)$. 
\end{itemize}
The mass of an initial data set $(M,g,K)$ is defined by comparing it with a model initial data set $(M_0,g_0,K_0)$ seen as a slice of some model spacetime $(N_0,h_0)$. The expressions that have been derived using Hamiltonian formalism usually take form of a flux integral at infinity that is well-behaved under isometries of $(N_0,h_0)$. In all physically reasonable matter models, the so called \emph{dominant energy condition} is satisfied and so it is standard to assume it when studying initial data sets. A general positive mass theorem may be stated as follows: "If an initial data set $(M,g,K)$ has well-defined mass and satisfies the dominant energy condition, then its mass is non-negative and is equal to zero if and only if $M$ embeds in the model spacetime as a spacelike slice with the induced metric $g$ and the second fundamental form of the embedding equal to $K$".

The first result in this direction was obtained by Schoen and Yau \cite{SchoenYau1} for asymptotically Euclidean $3$-dimensional initial data sets with $K \equiv 0$,  the so called Riemannian case. Subsequently, this result was extended by Schoen and Yau  in \cite{SchoenYau2} to the case of general $3$-dimensional asymptotically Euclidean initial data sets. Around the same time, Witten \cite{Witten} proved a positive mass theorem for asymptotically Euclidean initial data sets in all dimensions under the assumption that the manifold be spin, which imposes a topological restriction in dimensions $n>3$. Many new results have recently been proven in the asymptotically Euclidean setting. In particular, the result of \cite{SchoenYau2} has been extended to hold in dimensions $3 < n \leq 7$, see Eichmair \cite{EichmairReduction} and Eichmair, Huang, Lee and Schoen \cite{EichmairHuangLeeSchoen}, using in particular methods from geometric measure theory. In addition, a positive mass theorem has been shown to hold for a more general class of asymptotically Euclidean initial data sets which may, in addition to the distinguished asymptotically Euclidean end, have other asymptotic ends, see Lesourd, Unger and Yau \cite{LesourdUngerYau}. There are also recent proofs by so-called level set methods. See for instance Bray, Khuri, Kazaras, and Stern \cite{BrayKhuriKazarasStern} and Hirch, Kazaras, and Khuri \cite{HirschKazarasKhuri} using level sets of linearly growing harmonic functions and generalizations thereof. See also Agostiniani, Mazzieri and Oronzio \cite{AgostinianiMazzieriOronzio} for a proof in the Riemannian case using level sets of the Green function of the
manifold $(M,g)$.

The first definitions of mass in the asymptotically hyperbolic setting were given by Wang \cite{Wang}, for a class of conformally compact Riemannian manifolds satisfying rather stringent asymptotic conditions, and by Chru\'sciel and Nagy \cite{ChruscielNagy} for asymptotically anti-de Sitter initial data sets. Some key properties of these definitions, in particular coordinate invariance, were established  by Chru\'sciel and Herzlich \cite{ChruscielHerzlich}. Proofs of related positive mass theorems were obtained, under spin assumption, by Wang \cite{Wang}, Chru\'sciel and Herzlich \cite{ChruscielHerzlich}, and Zhang \cite{Zhang}. The first non-spinor proof in the asymptotically hyperbolic setting  was given by Andersson, Cai, and Galloway \cite{AnderssonCaiGalloway} for asymptotically hyperbolic manifolds with asymptotics as in  Wang \cite{Wang} satisfying an additional assumption that the so called \emph{mass aspect function} has sign. This assumption has been recently removed by Chru\'sciel,  Galloway, Nguyen, and Paetz in \cite{ChruscielGallowayNguyenPaetz}, yielding a non-spinor proof of the positive mass theorem for asymptotically hyperbolic manifolds in dimensions less than $8$. We also note a reduction argument of Chru\'sciel and Delay \cite{ChruscielDelayHyp} that allows one to deduce the positive mass theorem for asymptotically hyperbolic manifolds from the positive mass theorem for asymptotically Euclidean initial data sets in any dimension, provided that the later holds.

Positivity of mass of hyperboloidal initial data sets has also been actively studied. The definition of mass in this setting and the first spinor proof of the positive mass theorem can be found in Chru\'{s}ciel, Jezierski, and \L\c{e}ski \cite{ChruscielJezierskiLeski}. A non-spinor proof adapting the original argument of Schoen and Yau \cite{SchoenYau2} to the hyperboloidal case was given in dimension $3$ by Sakovich \cite{SakovichJang}. This proof was extended to higher dimensions $3 < n < 8$ by Lundberg \cite{Lundberg} using  methods similar to those in Eichmair \cite{EichmairReduction}. 

The proofs of the positive mass theorem for asymptotically Euclidean initial data sets in \cite{SchoenYau2} and \cite{EichmairReduction} and for hyperboloidal initial data sets in \cite{SakovichJang} and \cite{Lundberg} are based on the so-called \emph{Jang equation reduction} argument. The Jang equation is a prescribed mean curvature equation for a graph in a product Riemannian manifold that first appeared in the work of Jang \cite{Jang}. Jang was the first to notice the relevance of this equation in the context of positive mass theorems, especially analysing the case of having zero mass, pointing out that the equation can be used to characterize initial data sets arising as slices of Minkowski spacetime. The Jang equation was rediscovered by Schoen and Yau in \cite{SchoenYau2}, where it was observed that it can be used to deform initial data sets satisfying the dominant energy condition, to initial data sets with the same mass and "almost nonnegative" scalar curvature. Further, it turns out that performing a conformal change to zero scalar curvature decreases the mass and yields a Riemannian asymptotically Euclidean manifold to which the Riemannian case of the positive mass theorem for asymptotically Euclidean manifolds can be applied, see \cite{SchoenYau1} and \cite{SchoenYau2}, yielding positivity of mass for the initial data that may be chosen to be asymptotically Euclidean as in \cite{SchoenYau2} and \cite{EichmairReduction} or asymptotically hyperbolic as in \cite{SakovichJang} and \cite{Lundberg}. When the mass is zero, the constructed solution to the Jang equation provides the desired embedding into the Minkowski spacetime. 
 
It has been discussed (see e.g. Malec and O'Murchadha \cite{MalecMurchadha2004}) whether a Jang equation reduction argument can be used to prove the so-called spacetime Penrose inequality, a refinement of the positive mass theorem for spacetimes with black holes. The version of this conjectured result for asymptotically Euclidean initial data sets argues that there is a certain lower bound for their mass that becomes an equality only for spacelike slices of Schwarzschild spacetime. As pointed out by Bray and Khuri in \cite{BrayKhuri}, Schwarzschild spacetime is a warped product and so it appears more natural to modify the Jang equation so that the reduction is carried out in a warped product setting. One of the proposals of Bray and Khuri reduces the proof of the spacetime Penrose inequality to solving a coupled system consisting of the so-called generalized Jang equation and an equation for the warping factor  designed so that initial data sets satisfying the dominant energy condition are deformed into asymptotically Euclidean manifolds with ``almost nonnegative'' scalar curvature. We note that in spherical symmetry the coupled system reduces to a single ODE yielding a proof of the spacetime Penrose inequality in the spherically symmetric case, see \cite{BrayKhuri} for details.

Similar issues arise when attempting to apply the Jang equation reduction to anti-de Sitter initial data sets. In the case of zero mass the initial data set is expected to be embedable as a slice in anti-de Sitter spacetime, which is a warped product. Thus, it appears natural that reduction arguments allowing to prove the positive mass theorem for this class of initial data should involve a generalized Jang equation as well. One such argument, proposed by Cha and Khuri \cite{ChaKhuri18} in dimension 3, involves deforming the asymptotically anti-de Sitter initial data set to an asymptotically Euclidean initial data set in a several step process using both a coupled system involving a generalized Jang equation as in \cite{BrayKhuri} and a classical Jang equation as studied in \cite{SakovichJang}. Cha and Khuri \cite{ChaKhuri18} also study the behaviour of solutions of the generalized Jang equation at infinity in dimension $3$, for a fixed warping factor when the asymptotics of initial data are similar to those in \cite{Wang}. 

This article may be seen as a complement to the work of Cha and Khuri \cite{ChaKhuri18}. Our main result is the proof of the existence of solutions to the generalized Jang equation for asymptotically anti-de Sitter initial data in dimensions $3 \leq n \leq 7$ for very general asymptotics, given a warping factor satisfying a general asymptotic condition. We make use of geometric measure theory tools as in Eichmair \cite{EichmairReduction} and a barrier method, different from the one used in Cha and Khuri \cite{ChaKhuri18}, based on the observation that the metric is "almost conformal" to the Euclidean metric. We also propose an alternative reduction argument that can be used to deduce a positive mass theorem for asymptotically anti-de Sitter initial data from the positive mass theorem for asymptotically hyperbolic manifolds provided that a coupled system involving the same generalized Jang equation as in \cite{ChaKhuri18} and \cite{BrayKhuri} but a different equation for a warping factor, has a solution. The reduction thereby proceeds in one step and the proposed coupled system appears to be simpler and arguably more natural from a geometric point of view.

The paper is structured as follows. \Secref{sec_Preliminaries} contains necessary background and definitions that we will use. In \Secref{sec_Barriers} we construct barriers for the generalized Jang equation for asymptotically anti-de Sitter initial data sets with the most general possible asymptotics and in any dimension $n \geq 3$.  In \Secref{sec_RegularBVP} we apply standard methods (similar to those used in \cite[Section $2$]{EichmairReduction} and \cite[Section $4$]{SakovichJang}) to solve a regularized boundary value problem associated to the generalized Jang equation. In \Secref{sec_GeometricSolution} we apply the results of Sections \ref{sec_Barriers} and \ref{sec_RegularBVP} and methods of geometric measure theory developed in \cite{EichmairPlateau} to construct a so-called \emph{geometric} solution to the generalized Jang equation. This is the only part of our work where the dimension restrictions $3\leq n \leq 7$ are required. In \Secref{sec_MassChange} we present our proposed coupled system and discuss its relevance in the context of positive mass theorems. We prove \autoref{ChangeOfMassFinal}, that the positive mass theorem is valid for asymptotically anti-de Sitter initial data sets, assuming the coupled system has a solution with certain properties. 

\newpage
\textbf{Acknowledgements.} 
This work would not be possible without the help of my advisor Anna Sakovich. I would like to thank her for suggesting that I work on this topic and for patiently answering all my questions during our many discussions.  I would like to thank the Erwin Schr\"odinger International Institute for Mathematics and Physics (ESI) for their hospitality during the programs "Non-regular Spacetime Geometry" and "Mathematical Relativity: Past, Present, Future" in 2023 where parts of this paper were completed.


\section{Preliminaries}
\label{sec_Preliminaries}

\label{subsec_notation}
We will repeatedly work with manifolds embedded in each other. The following conventions will be employed

\begin{itemize}
    \item $(M,g)$ is an $n$-dimensional Riemannian manifold which is  complete unless otherwise stated.
    \item $(\widetilde M,\widetilde g)$ is the $(n+1)$-dimensional manifold $M\times \mathbb{R}$ equipped with the metric $\widetilde g := g + u^2dt^2$, where $t$ is the coordinate on $\mathbb{R}$ and $u: M \to \mathbb{R}$ is a positive function. The resulting Riemannian manifold $(\widetilde M, \widetilde g)$ is denoted by $M\times_u \mathbb{R}$ and is called the warped product space with the warping factor $u$. 
    \item $(\Gamma(f),\overline g)$ is the $n$-dimensional graph $\Gamma(f) := \{(x,f(x)) : x \in U\}\subset \widetilde M$, where $U \subset M$ is an open set, $f: U \to \mathbb{R}$ is a function and $\overline g := g + u^2df^2$ is the induced metric. The covariant derivative of $\overline g$ will be denoted by $\overline \nabla$.
    \item The second fundamental form $A$ of $\Sigma^{n-1} \subset M^n$ with respect to the unit normal $\nu$ is
    \begin{align*}
        A(X,Y) & := \langle \nabla_X\nu, Y \rangle.
    \end{align*}
\end{itemize}

Let $\{\partial_i\}_{i = 1}^n$ denote a local coordinate frame on $M$ so that $\{\overline \partial_i = \partial_i + f_i\partial_t\}_{i = 1}^{n}$ is a local coordinate frame on $\Gamma(f)$ and $\{\partial_t\} \cup \{\partial_i\}_{i = 1}^n$ is a local coordinate frame on $\widetilde M$. Using these frames, we can express the components of $\widetilde g$ and $\overline g$ in terms of those of $g$ as follows
\begin{align}
    \label{eq:MetricComponents}
    \begin{split}
    \widetilde g_{ij} & = g_{ij}, \quad \widetilde g_{it} = g_{it} = 0, \quad \widetilde g_{tt} = u^2, \quad \overline g_{ij} = g_{ij} + u^2f_if_j, \\
    \widetilde g^{ij} & = g^{ij}, \quad \widetilde g^{it} = g^{it} = 0, \quad \widetilde g^{tt} = u^{-2}, \quad \overline g^{ij} = g^{ij} - \frac{u^2f^if^j}{1 + u^2|df|_g^2}.
    \end{split}
\end{align}
We also have
\begin{equation}
    \label{eq:WarpedChristoffels}
    \begin{aligned}
    \widetilde \Gamma_{ij}^k & = \Gamma_{ij}^k, & 
    \widetilde \Gamma_{it}^k & = 0, & 
    \widetilde \Gamma_{tt}^k & = -uu^k, &
    \widetilde \Gamma_{ij}^t & = 0, & 
    \widetilde \Gamma_{it}^t & = \frac{u_i}{u}, & 
    \widetilde \Gamma_{tt}^t & = 0.
    \end{aligned}
\end{equation}
We now collect some standard results for the graphs $\Gamma(f) \subset M \times_u \mathbb{R}$, see e.g. \cite{BrayKhuri}.\\

\begin{restatable}[Properties of graphs in warped product spaces]{proposition}{PropertiesGraphs}
    \label{prop:PropertiesGraphs}
    Suppose that the warped product space $(\widetilde M = M \times \mathbb R, \widetilde g = g + u^2dt^2)$ satisfies $g \in C^2_\text{loc}(M)$ and $u \in C^2_\text{loc}(M)$, so that $\widetilde g \in C^2_\text{loc}(\widetilde M)$. We then have the following.
    
    The downward pointing unit normal of $\Gamma(f)$ is given by
    \begin{equation}
        \label{eq:UnitNormalGraph}
        \nu = \frac{1}{u}\frac{u^2f^i \partial_i - \partial_t}{(1 + u^2|df|_g^2)^{1/2}}. 
    \end{equation}
    The second fundamental form of $\Gamma(f)\subset M\times_u\mathbb{R}$ is given by
    \begin{equation}
        \label{eq:SecFundFormGraph}
        A = \frac{u\Hess(f) + du\otimes df + df \otimes du + u^2\langle du,df \rangle df\otimes df}{\sqrt{1 + u^2|df|_g^2}}.
    \end{equation}
    The mean curvature of $\Gamma(f)\subset M\times_u\mathbb{R}$ is given by
    \begin{equation}
        \label{eq:MeanCurvatureGraph}
        H_{\Gamma(f)} := \tr_{\overline g}(A) = \left(g^{i j} - \frac{u^2 f^i f^j}{1 + u^2|df|_g^2}\right)\frac{u\Hess_{i j}(f) + u_i f_j + f_i u_j + u^2 \langle du,df\rangle_g f_i f_j}{\sqrt{1+u^2 |df|_g^2}}. 
    \end{equation}
    The Christoffel symbols of $M$ and of the graph $\Gamma(f)$ are related by
    \begin{equation*}
        \overline \Gamma_{ij}^k - \Gamma_{ij}^k = - uu^kf_i f_j + A_{ij}\frac{uf^k}{\sqrt{1 + u^2|df|_g^2}}.
    \end{equation*}
    For $v,f: M \to \mathbb R$, the Laplacian $\Delta^{\overline g} v$ is given by
    \begin{align}
        \label{eq:LaplacianDifferenceGraph}
        \Delta^{\overline g} v 
        & = \left(g^{ij} - \frac{u^2f^if^j}{1 + u^2|df|_g^2}\right)\Hess_{ij}v + \frac{u|df|_g^2\langle \nabla u, \nabla v\rangle_g}{1 + u^2|df|_g^2} -\frac{u\langle\nabla f,\nabla v\rangle_g H_{\Gamma(f)}}{\sqrt{1 + u^2|df|_g^2}}.
    \end{align}
\end{restatable}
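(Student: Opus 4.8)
The plan is to derive all five identities directly from the coordinate data already tabulated — the components \eqref{eq:MetricComponents} of $\widetilde g$, $\overline g$ and their inverses, and the Christoffel symbols \eqref{eq:WarpedChristoffels} of $\widetilde g$. Throughout I write $f_{ij} := \partial_i\partial_j f$ for coordinate second derivatives and use $\Hess_{ij}(f) = f_{ij} - \Gamma_{ij}^k f_k$ to convert between these and the covariant Hessian of $g$. All the formulas are pointwise identities, so it suffices to verify them in a fixed local coordinate frame.

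\emph{Unit normal and second fundamental form.} I would first realize $\Gamma(f)$ as the zero set of $\Phi(x,t) := t - f(x)$, so that a normal field is $\widetilde\nabla\Phi = \widetilde g^{tt}\partial_t - \widetilde g^{ij}f_i\partial_j = u^{-2}\partial_t - f^j\partial_j$ by \eqref{eq:MetricComponents}. Since $|\widetilde\nabla\Phi|_{\widetilde g}^2 = u^{-2} + |df|_g^2 = u^{-2}(1 + u^2|df|_g^2)$, dividing by the norm and reversing the sign so that $\langle\nu,\partial_t\rangle_{\widetilde g} < 0$ (the ``downward pointing'' choice) gives \eqref{eq:UnitNormalGraph}. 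Next, in the graph frame $\overline\partial_i = \partial_i + f_i\partial_t$ I would expand $\widetilde\nabla_{\overline\partial_i}\overline\partial_j$ by bilinearity and insert \eqref{eq:WarpedChristoffels}; the entries $\widetilde\Gamma^k_{tt} = -uu^k$ and $\widetilde\Gamma^t_{it} = u_i/u$ are precisely what produce the $du$-dependent terms. Using $A_{ij} = A(\overline\partial_i,\overline\partial_j) = -\langle\nu,\widetilde\nabla_{\overline\partial_i}\overline\partial_j\rangle_{\widetilde g}$ together with $\langle\nu,\partial_k\rangle_{\widetilde g} = uf_k/\sqrt{1+u^2|df|_g^2}$ and $\langle\nu,\partial_t\rangle_{\widetilde g} = -u/\sqrt{1+u^2|df|_g^2}$, one reads off $A_{ij}$ and, after replacing $f_{ij}$ by the Hessian, obtains \eqref{eq:SecFundFormGraph}. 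Contracting with $\overline g^{ij}$ from \eqref{eq:MetricComponents} then yields the mean curvature \eqref{eq:MeanCurvatureGraph}.

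\emph{Connection and Laplacian.} For the Christoffel identity I would use the Gauss decomposition $\widetilde\nabla_{\overline\partial_i}\overline\partial_j = \overline\Gamma^k_{ij}\overline\partial_k - A_{ij}\nu$ — the sign coming from $A(X,Y) = \langle\widetilde\nabla_X\nu,Y\rangle = -\langle\nu,\widetilde\nabla_X Y\rangle$ — and compare the $M$-horizontal parts (coefficients of $\partial_k$) on both sides, noting that the horizontal part of $\overline\partial_k$ is $\partial_k$ while that of $\nu$ is $uf^k(1+u^2|df|_g^2)^{-1/2}\partial_k$; solving for $\overline\Gamma^k_{ij} - \Gamma^k_{ij}$ gives the claimed formula. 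Finally, starting from $\Delta^{\overline g}v = \overline g^{ij}(\partial_i\partial_j v - \overline\Gamma^k_{ij}\partial_k v)$, substituting the Christoffel identity just proved, and simplifying with the three contractions $\overline g^{ij}\partial_i\partial_j v - \overline g^{ij}\Gamma^k_{ij}v_k = \overline g^{ij}\Hess_{ij}v$, $\overline g^{ij}f_if_j = |df|_g^2(1+u^2|df|_g^2)^{-1}$, and $\overline g^{ij}A_{ij} = H_{\Gamma(f)}$, produces \eqref{eq:LaplacianDifferenceGraph}.

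Each step reduces to a finite, routine computation once the frames and orientation are fixed; the main nuisance is bookkeeping — keeping the coordinate second derivatives $f_{ij}$ distinct from the covariant Hessian, and carrying the downward-pointing sign through consistently — rather than anything conceptual. These formulas are standard; cf.\ \cite{BrayKhuri}.
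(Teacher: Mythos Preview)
Your proposal is correct and complete; each of the five identities follows by the direct coordinate computation you outline, and the sign and bookkeeping conventions are handled consistently. The paper itself does not give a proof of this proposition at all --- it simply states the formulas as standard and refers to \cite{BrayKhuri} --- so your argument in fact supplies more detail than the paper does.
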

We will now introduce the class of asymptotically hyperbolic initial data sets that we will work with. Our model for $(M,g)$ is the hyperbolic space $\mathbb H^n$ for which we will use the Poincaré ball model. In other words, $\mathbb{H}^n$ will be represented as the unit ball $ B_1 := \{x \in \mathbb{R}^n : |x| < 1\}$ equipped with the metric
\begin{equation*}
    b := \frac{4}{(1-|x|^2)^2}\delta,
\end{equation*}
where $\delta$ is the standard Euclidean metric on $B_1$. Using $\rho := \frac{1-|x|^2}{2}$ we can rewrite $b$ as
\begin{align}
    \label{eq:AltDefMetric}
    \begin{split}
        b & = \rho^{-2}\delta = \rho^{-2}\left(\frac{d\rho^2}{1-2\rho} + (1-2\rho)\sigma\right),
    \end{split}
\end{align}
where $\sigma$ is the round metric on $S^{n-1}$.\\
\begin{definition}
    \label{def:AsympHypManifolds}
    Suppose that $k\geq 0$ is an integer, that $\alpha \in [0,1)$ and $\tau > 0$ are real numbers and that $(M,g)$ is a Riemannian manifold with $g \in C^{k,\alpha}_\text{loc}(M)$, possibly with boundary. We then say that $(M,g)$ is a $C^{k,\alpha}_\tau$ \emph{asymptotically hyperbolic manifold}, if there is a compact set $K_0 \subset M$, a radius $R_0 > 0$ and a diffeomorphism
    \begin{equation*}
        \Psi: M\setminus K_0 \to \mathbb{H}^n \setminus \overline {B_{R_0}},
    \end{equation*}
    called a \emph{chart at infinity} such that $\Psi_*g - b \in C^{k,\alpha}_{\text{loc}}(\mathbb{H}^n; S^2\mathbb{H}^n)$ and
    \begin{equation*}
        ||\Psi_* g - b||_{C^{k,\alpha}_\tau(\mathbb{H}^n\setminus B_{R_0};S^2\mathbb{H}^n)} := \sup_{x \in \mathbb{H}^n \setminus \overline B_{R_0 + 1}}\bigg(\rho(x)^{-\tau} ||\Psi_*g -b||_{C^{k,\alpha}(B_1(x);S^2\mathbb{H}^n)}\bigg) < \infty.
    \end{equation*}
\end{definition}
Next we define the notion of a $C^{k,\alpha}$ initial data set.\\ 
\begin{definition}
    \label{def:InitialDataSets}
    Suppose that $k \geq 1$ is an integer, that $\alpha \in [0,1)$ is a real number, that $(M,g)$ is a Riemannian manifold with $g \in C^{k,\alpha}_\text{loc}(M)$, possibly with boundary and that $K \in C_\text{loc}^{k-1,\alpha}(M)$ symmetric $2$-tensor on $M$. We then say that the triple $(M,g,K)$ is a \emph{$C^{k,\alpha}$ initial data set}. 
\end{definition}
\hfill
\begin{definition}
    \label{def:aads}
    Suppose that $k \geq 1$ is an integer, that $\alpha \in [0,1)$ and $\tau > 0$ are real numbers and that $(M,g,K)$ is a $C^{k,\alpha}$ initial data set. We then say that $(M,g,K)$ is a \emph{$C^{k,\alpha}_\tau$ asymptotically anti-de Sitter} initial data set if $(M,g)$ is a $C^{k,\alpha}_\tau$ asymptotically hyperbolic manifold and the symmetric $2$-tensor $K$ satisfies 
    \begin{equation*}
        \label{eq:DecayDef_2}
        ||\Psi_* K||_{C^{k-1,\alpha}_\tau(\mathbb{H}^n\setminus \overline B_{R_0};S^2\mathbb{H}^n)} < \infty,
    \end{equation*}
    where $\Psi: M \setminus K_0 \to \mathbb H^n \setminus \overline{B_{R_0}}$ is the chart at infinity as in \autoref{def:InitialDataSets}.
\end{definition}
 \hfill
\begin{definition}
    \label{def:DecayFunctions}
    Suppose that $k,k'\geq 0$ are integers, that $\alpha,\alpha' \in [0,1)$ and $\tau,\tau' > 0$ are real numbers, 
    that $(M,g)$ is a $C^{k,\alpha}_\tau$ asymptotically hyperbolic manifold and that $\Psi: M\setminus K_0 \to \mathbb H^n \setminus B_{R_0}$ is the associated chart at infinity. We then define \emph{the weighted H\"older space $C^{k',\alpha'}_{\tau'}(M\setminus K_0)$} as the collection of those $f \in C^{k',\alpha'}_\text{loc}(M\setminus K_0)$ such that
    \begin{equation*}
        ||f||_{C^{k',\alpha'}_{\tau'}(M\setminus K_0)} := \sup_{x \in \mathbb H^n\setminus \overline {B_{R_0 + 1}}}\left(\rho(x)^{-\tau}||f\circ \Psi^{-1}||_{C^{k,\alpha}(B_1(\Phi(x)))}\right) < \infty.
    \end{equation*}
    Since $K_0$ is compact it can be covered by finitely many geodesic balls $B_{r_1}(p_1), \dots, B_{r_k}(p_k)$ of radius less than $r_0$, where $r_0 > 0$ is the injectivity radius of $(M,g)$. Letting $\phi_i: B_{r_i}(p_i) \to \{x \in \mathbb R^n : |x| < r_i\}$ denote geodesic normal coordinates in each $B_{r_i}(p_i)$, we define \emph{the weighted H\"older space $C^{k',\alpha'}_{\tau'}(M)$} as the collection of those $f \in C^{k',\alpha'}_\text{loc}(M)$ such that
    \begin{equation*}
        ||f||_{C^{k',\alpha'}_{\tau'}(M)} := ||f||_{C^{k',\alpha'}_{\tau'}(M\setminus K_0)} + \sum_{i = 1}^k ||f\circ \phi_i^{-1}||_{C^{k',\alpha'}(B_{r_i}(0))} < \infty.
    \end{equation*}
\end{definition}
Different finite open covers of the compact set $K_0$ give different but equivalent norms on the space $C^{k',\alpha'}_\text{loc}(M)$. 

We remind the reader that asymptotically anti-de Sitter initial data sets model slices of spacetimes satisfying the Einstein equations with negative cosmological constant. Consequently, the \emph{dominant energy condition} takes the following form.\\
\begin{definition}
    \label{def:DEC}
    Suppose that $k \geq 2$, we then say that a $C^{k,\alpha}$ initial data set $(M,g,K)$ satisfies \emph{the dominant energy condition} if $\mu \geq |J|_g$, where $\mu$ and $J$ are given by
    \begin{align}
        \label{eq:LocalEnergy}
        2\mu & = \Scal^g + n(n-1) + \tr_g(K)^2 - |K|_g^2, \\
        \label{eq:LocalCurrent}
        J & = \div_g K - d\tr_gK.
    \end{align}
\end{definition}
We will now introduce the generalized Jang equation. Let $(M,g,K)$ be a $C^{k,\alpha}_\tau$ asymptotically anti-de Sitter initial data set. We extend the tensor $K$ to $M \times_u \mathbb{R}$ as follows
\begin{equation}
    \label{eq:ExtensionK}
    \overline K(\partial_i, \partial_t) := \overline K(\partial_t,\partial_i) := 0 , \quad \overline K(\partial_t, \partial_t) := \frac{u^2 \langle df,du\rangle}{\sqrt{1 + u^2|df|_g^2}}.
\end{equation}
In this case
\begin{align}
    \label{eq:TraceK}
    \begin{split}
    \tr_{\Gamma(f)} \overline K
    & = \tr_{\Gamma(f)} K + \frac{u^2|df|_g^2\langle df,du\rangle}{(1 + u^2|df|_g^2)^{3/2}},
    \end{split}
\end{align}
where the second term in the left hand side of \eqref{eq:TraceK} can be expressed using \eqref{eq:UnitNormalGraph} as follows
\begin{equation*}
    \frac{u^2|df|_g^2\langle df,du\rangle}{(1 + u^2|df|_g^2)^{3/2}} = \frac{\langle \nu, \nabla u\rangle}{u}\left(1 - \langle -\nu, u^{-1}\partial_t \rangle^2\right).
\end{equation*}
Hence the trace of the extended tensor $\overline K$ is
\begin{equation}
    \label{eq:PresecribedMeanCurvature}
    \tr_{\Gamma(f)} \overline K = \tr_{\Gamma(f)}K + \frac{\langle \nu, \nabla u\rangle}{u}\left(1 - \langle -\nu, u^{-1}\partial_t \rangle^2\right). 
\end{equation}
The identity \eqref{eq:MeanCurvatureGraph} now implies
\begin{equation}
    \label{eq:JangOperator}
    \mathcal{J}(f) := H_{\Gamma(f)} - \tr_{\Gamma(f)}(\overline K) = \left(g^{i j} - \frac{u^2f^i f^j}{1 + u^2 |df|_g^2}\right)\left(\frac{u\Hess_{i j}f + u_i f_j + f_i u_j}{\sqrt{1 + u^2|df|_g^2}} - K_{i j}\right). 
\end{equation}
The equation
\begin{align}
    \label{eq:JangEquation}
    \left(g^{i j} - \frac{u^2f^i f^j}{1 + u^2 |df|_g^2}\right)\left(\frac{u\Hess_{i j}f + u_i f_j + f_i u_j}{\sqrt{1 + u^2|df|_g^2}} - K_{i j}\right) = 0, 
\end{align}
will be referred to as the \emph{generalized Jang equation} and abbreviated as $\mathcal J(f) = 0$. This equation was originally introduced by Bray and Khuri in \cite{BrayKhuri}, where one can find an extensive discussion and motivation. In particular, the extension \eqref{eq:ExtensionK} is chosen so that solutions to the generalized Jang equation blow up respectively down at marginally outer trapped respectively inner trapped surfaces of the initial data set $(M,g,K)$, see Bray and Khuri \cite[Section $2$]{BrayKhuri} or the proof of \autoref{thm:JangGeometric} below. 

In this article, we will use the following straightforward modification of a result by Bray and Khuri \cite[Theorem 1]{BrayKhuri}.\\

\begin{proposition}[Generalized Schoen-Yau identity]
    \label{prop:GeneralizedSchoenYauIdentity}
    Suppose that $k \geq 2$, that $U \subset M$ is open, that $(M,g,K)$ is a $C^{k,\alpha}$ initial data set and that $f: U \to \mathbb R$ satisfies the generalized Jang equation \eqref{eq:JangEquation}. Then the scalar curvature $\Scal^{\overline g}$ of the graph $ \Sigma = \Gamma(f)$ is given by
    \begin{equation}
        \label{eq:SchoenYauIdentity}
        \Scal^{\overline g} = -n(n-1) + 2(\mu - J(w)) + |A - \overline K|_{\overline g}^2 + 2|q|_{\overline g}^2 - \frac{2}{u}\div_{\overline g}(uq),
        \end{equation}
        where
        \begin{equation}
            \label{eq:DefSchoenYau}
            q_i := \frac{uf^j(A^\Sigma_{ij} - (\overline K|_\Sigma)_{ij})}{\sqrt{1 + u^2|df|_g^2}}, \quad w:= \frac{uf^i\partial_i}{\sqrt{1 + u^2|df|_g^2}}, \quad f^i = f_jg^{ij}.
        \end{equation}
\end{proposition}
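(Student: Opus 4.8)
The plan is to adapt the computation of Bray and Khuri \cite[Theorem 1]{BrayKhuri}, the point being that the cosmological constant term $n(n-1)$ passes through the argument without interacting with any of the Jang-graph geometry. I would begin with the Gauss equation for the hypersurface $\Sigma = \Gamma(f) \subset \widetilde M = M \times_u \mathbb R$ equipped with the downward unit normal $\nu$ of \eqref{eq:UnitNormalGraph},
\begin{equation*}
    \Scal^{\overline g} = \Scal^{\widetilde g} - 2\Ric^{\widetilde g}(\nu,\nu) + H_{\Gamma(f)}^2 - |A|_{\overline g}^2 ,
\end{equation*}
and insert the standard warped-product curvature identities for $\widetilde g = g + u^2\,dt^2$: for $X,Y$ tangent to $M$,
\begin{gather*}
    \Scal^{\widetilde g} = \Scal^g - \tfrac{2}{u}\Delta^g u , \qquad \Ric^{\widetilde g}(X,\partial_t) = 0 , \qquad \Ric^{\widetilde g}(\partial_t,\partial_t) = -u\,\Delta^g u , \\
    \Ric^{\widetilde g}(X,Y) = \Ric^g(X,Y) - \tfrac{1}{u}(\Hess^g u)(X,Y) .
\end{gather*}
By \eqref{eq:UnitNormalGraph} the $TM$-component of $\nu$ is exactly the vector $w = \frac{uf^i\partial_i}{\sqrt{1+u^2|df|_g^2}}$ of \eqref{eq:DefSchoenYau} and its $\partial_t$-component is $-\big(u\sqrt{1+u^2|df|_g^2}\big)^{-1}$, so $\Ric^{\widetilde g}(\nu,\nu) = \Ric^g(w,w) - \tfrac{1}{u}(\Hess^g u)(w,w) - \frac{\Delta^g u}{u(1+u^2|df|_g^2)}$. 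Substituting turns the Gauss equation into an identity for $\Scal^{\overline g}$ in terms of $\Scal^g$, $\Ric^g(w,w)$, $\Delta^g u$, $(\Hess^g u)(w,w)$, $H_{\Gamma(f)}$ and $A$.

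Next I would use the generalized Jang equation in the form $H_{\Gamma(f)} = \tr_{\Gamma(f)}\overline K$ (see \eqref{eq:JangOperator}) and complete the square: since $\tr_{\Gamma(f)}A = H_{\Gamma(f)} = \tr_{\Gamma(f)}\overline K$,
\begin{equation*}
    H_{\Gamma(f)}^2 - |A|_{\overline g}^2 = -\,|A - \overline K|_{\overline g}^2 + 2\langle A, \overline K\rangle_{\overline g} - |\overline K|_{\overline g}^2 + \big(\tr_{\Gamma(f)}\overline K\big)^2 ,
\end{equation*}
all traces and contractions being taken along $\Sigma$. I would then invoke \eqref{eq:LocalEnergy} to write $\Scal^g = 2\mu - n(n-1) - \tr_g(K)^2 + |K|_g^2$, which is where the $-n(n-1)$ of \eqref{eq:SchoenYauIdentity} comes from, and reorganize the remaining first- and second-order terms. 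The contributions linear in $du$ and $df$, together with $\Ric^g(w,w)$, $\Delta^g u$ and $(\Hess^g u)(w,w)$, are processed via the Codazzi equation for $\Sigma \subset \widetilde M$ (which relates the tangential derivatives of $A$ to ambient Ricci curvature) and the momentum constraint $J = \div_g K - d\,\tr_g K$ on $M$; this is exactly what produces the vector field $q$ of \eqref{eq:DefSchoenYau}, the terms $2|q|_{\overline g}^2$ and $-2J(w)$, and a divergence. Tracking the warping factor carefully — in particular the $-\tfrac1u\Hess^g u$ piece of $\Ric^{\widetilde g}$ and the factor $u$ inside the definition of $q$ — the divergence that appears is $-\tfrac{2}{u}\div_{\overline g}(uq)$, which reduces to the Schoen–Yau term $-2\div_{\overline g}(q)$ when $u$ is constant.

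The main obstacle is bookkeeping rather than conceptual: many first- and second-order terms in $u$ and $f$ enter — from the decomposition of $\nu$, from $\overline K(\partial_t,\partial_t)$ in \eqref{eq:ExtensionK}, and from the warped-product curvature formulas — and one must check that they collapse precisely into the compact combinations $J(w)$, $|q|_{\overline g}^2$ and $\tfrac1u\div_{\overline g}(uq)$. Since the only structural difference from \cite[Theorem 1]{BrayKhuri} is the additive constant $n(n-1)$ built into $\mu$ through \eqref{eq:LocalEnergy}, which commutes with every step of the computation, I would carry out the substitutions described above and refer to \cite{BrayKhuri} for the detailed algebraic reduction.
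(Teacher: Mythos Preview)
Your proposal is correct and matches the paper's approach exactly: the paper does not give an independent proof but simply cites \cite[Theorem~1]{BrayKhuri} and remarks that the only difference is the additive constant $n(n-1)$ absorbed into the definition of $\mu$ in \eqref{eq:LocalEnergy}. Your outline of the Gauss equation, warped-product curvature identities, completion of the square, and Codazzi/constraint manipulations is a faithful sketch of the Bray--Khuri computation, and your observation that the cosmological constant is inert throughout is precisely the paper's point.
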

The difference between \eqref{eq:SchoenYauIdentity} and \cite[Equation $2.3$]{BrayKhuri} is that the definition of $\mu$ in \eqref{eq:LocalEnergy} is different, to account for the cosmological constant. 

We will often assume that the warping factor $u$ is strictly positive and that $u - \rho^{-1} \in C^{2,0}_0(M\setminus K_0)$. In this case we have the following estimate.\\
\begin{lemma}
    \label{lem:WarpingBounds}
    Suppose that $k \geq 1$, that $(M,g,K)$ is a $C^{k,\alpha}_{\tau}$ asymptotically anti-de Sitter initial data set and that $(u - \rho^{-1}) \in C^{2,0}_0(M)$. Then there is a constant $C > 0$ depending only on $(M,g,K)$, $|u - \rho^{-1}|_{C^{2,0}_0(M)}$ and $\min_M u > 0$ such that
    \begin{align*}
        |\nabla u|_g + |\nabla^2 u|_g \leq Cu.
    \end{align*}
\end{lemma}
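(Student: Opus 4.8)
The plan is to exploit the splitting $u = \rho^{-1} + v$ with $v := u - \rho^{-1} \in C^{2,0}_0(M)$ and to estimate the two pieces separately, the key point being that $\rho^{-1}$ is, up to an additive constant, the static lapse function of $\mathbb H^n$, so that its $b$-Hessian is completely explicit. Throughout, $\nabla^2$ denotes $\Hess$.

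First I would reduce to an estimate near infinity. Cover $M$ by a compact set $K_1 \supset K_0$ and a neighbourhood of infinity. On $K_1$ the function $u$ is a fixed positive $C^2$ function bounded below by $\min_M u > 0$, so $|\nabla u|_g + |\nabla^2 u|_g \le C u$ there with $C$ controlled by the given data. On the end I work in the chart $\Psi$, writing $\hat g := \Psi_* g$, $\hat u := u\circ\Psi^{-1}$ and $v := \hat u - \rho^{-1}$, so that $\|\hat g - b\|_{C^{k,\alpha}_\tau} < \infty$ and $\|v\|_{C^{2,0}_0} < \infty$. Since $\tau > 0$ and $\rho$ is bounded above on $\mathbb H^n\setminus\overline{B_{R_0}}$, after enlarging $R_0$ we may assume $\tfrac12 b \le \hat g \le 2b$, so that $b$-norms and $\hat g$-norms of tensors are uniformly comparable; moreover, after a further enlargement of $R_0$, $\hat u \ge \rho^{-1} - \|v\|_{C^0_0} \ge \tfrac12\rho^{-1}$ on the end, because $\rho \to 0$ at infinity.

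Next I would record the model estimates. Writing $r$ for the $b$-distance to the centre of the ball, one has $|x| = \tanh(r/2)$, hence $\rho = \tfrac12(1-|x|^2) = (1+\cosh r)^{-1}$, i.e. $\rho^{-1} = 1 + \cosh r$. Since $\Hess^b(\cosh r) = \cosh r \cdot b$ on $\mathbb H^n$ (the standard static identity, which also follows at once from $b = dr^2 + \sinh^2 r\,\sigma$),
\begin{equation*}
    d(\rho^{-1}) = \sinh r\, dr, \qquad \Hess^b(\rho^{-1}) = (\rho^{-1}-1)\, b,
\end{equation*}
so that $|d(\rho^{-1})|_b = \sinh r \le \rho^{-1}$ and $|\Hess^b(\rho^{-1})|_b = \sqrt n\,(\rho^{-1}-1) \le \sqrt n\,\rho^{-1}$ on the end.

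Finally I would transfer these to $\hat g$ and add the contribution of $v$. Because $k \ge 1$, the difference of Christoffel symbols $T := \Gamma^{\hat g} - \Gamma^b$ is the tensor obtained by contracting $\hat g^{-1}$ with $\nabla^b(\hat g - b)$ (using $\nabla^b b = 0$), hence $|T|_b \le C\,\|\hat g - b\|_{C^1_\tau}\,\rho^\tau \le C$ on the end. For a function $w$ the difference $\Hess^{\hat g}w - \Hess^b w$ is a contraction of $T$ with $dw$, and $dw$ is metric-independent, so
\begin{equation*}
    |\nabla^{\hat g} w|_{\hat g} \le C\,|dw|_b, \qquad |\Hess^{\hat g} w|_{\hat g} \le C\big(|\Hess^b w|_b + |T|_b\,|dw|_b\big).
\end{equation*}
Taking $w = \rho^{-1}$ gives $|\nabla^{\hat g}(\rho^{-1})|_{\hat g} + |\Hess^{\hat g}(\rho^{-1})|_{\hat g} \le C\rho^{-1}$, while taking $w = v$ and using $\|v\|_{C^{2,0}_0} < \infty$ gives $|\nabla^{\hat g} v|_{\hat g} + |\Hess^{\hat g} v|_{\hat g} \le C$. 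Adding, and using $\rho^{-1} \ge 1$ together with $\hat u \ge \tfrac12\rho^{-1}$ on the (sufficiently deep) end, yields $|\nabla^{\hat g}\hat u|_{\hat g} + |\Hess^{\hat g}\hat u|_{\hat g} \le C\rho^{-1} \le 2C\hat u$; pulling back by $\Psi$ and combining with the bound on $K_1$ proves the lemma. The only input beyond routine asymptotic bookkeeping is the identification $\rho^{-1} = 1 + \cosh r$ and the static identity for $\cosh r$, which is exactly what makes $\Hess^b(\rho^{-1})$ explicit; I expect the mild care needed in comparing the $g$- and $b$-geometries (the bound on $T$) to be the only other point requiring attention.
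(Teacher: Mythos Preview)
Your proof is correct and follows essentially the same approach as the paper's: compute the $b$-derivatives of $\rho^{-1}$ explicitly, use $v = u - \rho^{-1} \in C^{2,0}_0$ to bound the remainder, and transfer from $b$ to $g$ via the assumed $C^{k,\alpha}_\tau$ closeness. The only stylistic difference is that you obtain the model estimates by recognising $\rho^{-1} = 1 + \cosh r$ and invoking the static identity $\Hess^b(\cosh r) = \cosh r\, b$, whereas the paper simply records $|\nabla^b \rho^{-1}|_b = (1+O(\rho))/\rho$ and $|(\nabla^b)^2 \rho^{-1}|_b = (2+O(\rho))/\rho$ by direct computation; your version is a bit more conceptual but entirely equivalent.
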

\begin{proof}
    Note that
    \begin{align*}
        \left|\nabla^b \rho^{-1}\right|_b = \frac{1+O(\rho)}{\rho}, \quad |(\nabla^b)^2 \rho^{-1}|_b = \frac{2+O(\rho)}{\rho}.
    \end{align*}
    The claim now follows from the assumptions on $(M,g,K)$, $u$ and $u - \rho^{-1}$.
\end{proof}


\section{Barriers at infinity for the Jang equation}
\label{sec_Barriers}
In this section, $(M,g,K)$ is a $C^{1,\alpha}_\tau$ asymptotically anti-de Sitter initial data set for some $\alpha \in (0,1)$ and $\tau > n/2$. The barriers are defined as follows (cf. Schoen and Yau \cite{SchoenYau2}).\\
\begin{definition}
    \label{def:Barrier}
    A function $f_+: \{\rho < \rho_0\} \to \mathbb R$ is an upper barrier for the Jang equation $\mathcal{J}(f) = 0$, if $f_+ \in C^2(\{\rho < \rho_0\}) \cap C^0(\{\rho \leq \rho_0\})$ and
    \begin{align}
        \label{eq:BarrierConditionUpper}
        \lim_{\rho \to \rho_0-}\partial_\rho f_+ = \infty, \quad \mathcal{J}(f_+) < 0.
    \end{align}
    Similarly, a function $f_-:\{\rho < \rho_0\} \to \mathbb R$ is a lower barrier for the Jang equation if $f_- \in C^2(\{\rho < \rho_0\}) \cap C^0(\{\rho \leq \rho_0\})$ and
    \begin{align}
        \label{eq:BarrierConditionLower}
        \lim_{\rho \to \rho_0-}\partial_\rho f_- = -\infty, \quad \mathcal{J}(f_-) > 0.
    \end{align}
\end{definition}
The following comparison principle is well known (cf. Schoen and Yau \cite{SchoenYau2} and Sakovich \cite[Proposition $4.4$]{SakovichJang}).\\
\begin{lemma}
    \label{lem:ComparisonJang}
    Let $0 < \rho_1 < \rho_0$ be constants, suppose that $f_+$ is an upper barrier and that $f_-$ is a lower barrier for the Jang equation on $\{\rho \leq \rho_0\}$ such that
    \begin{equation*}
        f_-|_{\{\rho = \rho_1\}} \leq \phi|_{\{\rho = \rho_1\}} \leq f_+|_{\{\rho = \rho_1\}}
    \end{equation*}
    and let $\epsilon > 0$ be such that $\mathcal{J}(f_+) \leq \epsilon f_+$ and $\mathcal{J}(f_-) \geq \epsilon f_-$ on $\{\rho_1 \leq \rho \leq \rho_0\}$. Further suppose that $f$ is a solution to the Dirichlet problem
    \begin{equation*}
        \begin{cases}
        \mathcal{J}(f) = \epsilon f, & \text{on } \{\rho \geq \rho_1\} \\
        f = \phi , & \text{on } \{\rho = \rho_1\} 
        \end{cases}.
    \end{equation*}
    Then
    \begin{equation*}
        f_- \leq f \leq f_+ \text{ on } \{\rho_1 \leq \rho \leq \rho_0\}.
    \end{equation*}
\end{lemma}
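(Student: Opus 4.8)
The plan is to argue by contradiction using the classical comparison principle for quasilinear elliptic equations of prescribed mean curvature type. The key structural observation is that the generalized Jang operator $\mathcal{J}$ in \eqref{eq:JangOperator} has the form $\mathcal{J}(f) = a^{ij}(x,\nabla f)\,\Hess_{ij}f + b(x,\nabla f)$, i.e.\ it is linear in the Hessian, independent of the value of $f$ itself, and elliptic: the coefficient matrix $a^{ij} = \big(g^{ij} - \tfrac{u^2 f^i f^j}{1+u^2|df|_g^2}\big)\tfrac{u}{\sqrt{1+u^2|df|_g^2}}$ is positive definite, since the quadratic form $\big(g^{ij} - \tfrac{u^2f^if^j}{1+u^2|df|_g^2}\big)\xi_i\xi_j$ is bounded below by $(1+u^2|df|_g^2)^{-1}|\xi|_g^2$ and $u>0$. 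It suffices to prove $f \le f_+$ on $\{\rho_1 \le \rho \le \rho_0\}$; the bound $f_- \le f$ follows by the same reasoning with maxima replaced by minima and the barrier inequalities \eqref{eq:BarrierConditionLower} in place of \eqref{eq:BarrierConditionUpper}.

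First I would note that $\{\rho_1 \le \rho \le \rho_0\}$ is compact and that $h := f - f_+$ is continuous on it: $f$ is defined on all of $\{\rho\ge\rho_1\}$ and is $C^2$ on its interior (in particular near $\{\rho=\rho_0\}$, which is interior to $\{\rho\ge\rho_1\}$ because $\rho_0>\rho_1$), while $f_+\in C^0(\{\rho\le\rho_0\})$. Hence $m := \max_{\{\rho_1\le\rho\le\rho_0\}} h$ is attained, say at $p$, and I assume for contradiction that $m>0$. Then $p\notin\{\rho=\rho_1\}$, since there $f=\phi\le f_+$, so $h\le 0$. Next I would exclude the maximum being attained on $\{\rho=\rho_0\}$: near this hypersurface $f$ is $C^2$, hence $\partial_\rho f$ is bounded, whereas $\partial_\rho f_+\to+\infty$ as $\rho\to\rho_0-$ by \eqref{eq:BarrierConditionUpper}, so $\partial_\rho h = \partial_\rho f - \partial_\rho f_+ < 0$ on a one-sided collar $\{\rho_0-\delta\le\rho<\rho_0\}$; thus along each radial curve $h$ is strictly decreasing as $\rho\uparrow\rho_0$, so the maximum of $h$ over such a curve is attained at its inner endpoint. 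Consequently we may take $p$ with $\rho_1 < \rho(p) < \rho_0$, where in particular $f_+$ is $C^2$ near $p$ and $\mathcal{J}(f_+)(p)$ is defined.

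At such an interior maximum of $h$ we have $h(p) = m > 0$, $\nabla f(p) = \nabla f_+(p)$, and $\Hess f(p) \le \Hess f_+(p)$ as symmetric $2$-tensors (at a critical point of $h$ the covariant and coordinate Hessians of $h$ agree, and the latter is negative semidefinite at a maximum). Since the coefficients $a^{ij}$ and $b$ of $\mathcal{J}$ depend only on $x$ and on $\nabla f$, they take the same values at $p$ whether evaluated along $f$ or along $f_+$; ellipticity ($a^{ij}\ge 0$) together with $\Hess f(p)\le\Hess f_+(p)$ then gives $\mathcal{J}(f)(p)\le\mathcal{J}(f_+)(p)$. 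Combining with the equation $\mathcal{J}(f)=\epsilon f$ and the hypothesis $\mathcal{J}(f_+)\le\epsilon f_+$ on $\{\rho_1\le\rho\le\rho_0\}$,
\[
\epsilon\, f(p) \;=\; \mathcal{J}(f)(p) \;\le\; \mathcal{J}(f_+)(p) \;\le\; \epsilon\, f_+(p),
\]
so $f(p)\le f_+(p)$ because $\epsilon>0$, contradicting $h(p)=f(p)-f_+(p)=m>0$. Hence $m\le 0$, i.e.\ $f\le f_+$. The lower bound $f\ge f_-$ is obtained identically: a negative interior minimum of $f-f_-$ would force $\nabla f = \nabla f_-$ and $\Hess f \ge \Hess f_-$ there, hence $\mathcal{J}(f)\ge\mathcal{J}(f_-)$ by ellipticity, hence $\epsilon f \ge \mathcal{J}(f_-)\ge\epsilon f_-$ by the hypothesis of the lemma, a contradiction.

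The step I expect to be the main obstacle is the exclusion of the maximum of $f - f_+$ at (or arbitrarily close to) the hypersurface $\{\rho=\rho_0\}$. This is exactly where the slope condition $\lim_{\rho\to\rho_0-}\partial_\rho f_+ = \infty$ from Definition~\ref{def:Barrier} enters, and it must be paired with an a priori bound on $|\nabla f|$ near $\{\rho=\rho_0\}$, which is available because $\{\rho=\rho_0\}$ lies in the interior of the domain $\{\rho\ge\rho_1\}$ of the classical solution $f$, so interior regularity applies. One should also verify that $\partial_\rho f_+\to\infty$ holds with enough uniformity along $\{\rho=\rho_0\}$ for the collar argument, which will be automatic for the explicit barriers constructed below. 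Everything else is the standard quasilinear comparison argument and is routine.
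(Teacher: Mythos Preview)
Your proposal is correct and follows essentially the same approach as the paper: locate an extremum of the difference $f-f_+$ (equivalently $f_+-f$), rule out the two boundary hypersurfaces using the Dirichlet data on $\{\rho=\rho_1\}$ and the slope condition $\partial_\rho f_+\to\infty$ on $\{\rho=\rho_0\}$, and at an interior extremum use that $\nabla f = \nabla f_+$ makes the coefficients of $\mathcal{J}$ coincide so that ellipticity and the Hessian inequality yield $\mathcal{J}(f)\le\mathcal{J}(f_+)$. The paper's write-up is more terse but the logic is identical.
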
 
\begin{proof}
    Since $\{\rho_1 \leq \rho \leq \rho_0\}$ is compact it follows that $f_+ - f$ attains a minimum at some point $p \in \{\rho_1 \leq \rho \leq \rho_0\}$. If $p \in \{\rho = \rho_1\}$ then since $f_+ \geq \phi$ on $\{\rho = \rho_1\}$ we find
    \begin{align*}
        \inf_{\rho_1 \leq \rho \leq \rho_0} (f_+ - f) = f_+(p) - f(p) = f_+(p) - \phi(p) \geq 0.
    \end{align*}
    If $p \in \{\rho_1 < \rho < \rho_0\}$ then $|d(f - f_+)|_g(p) = 0$ and $\Hess (f_+ - f)$ is positive definite at $p$. Combining this with \eqref{eq:JangOperator} we find that at $p$
    \begin{align*}
        \epsilon(f_+ - f) \geq \mathcal J(f_+) - \mathcal J(f) = u\Delta^g(f_+ - f) \geq 0.
    \end{align*}
    Finally if $p \in \{\rho = \rho_0\}$, then since $\lim_{\rho \to \rho_0^-} \partial_\rho f_+ = \infty$, it follows that $\lim_{\rho \to \rho_0^-}  \partial_\rho(f_+ - f) = \infty$ and hence there is some point $p'$ with $\rho_1 < \rho(p') < \rho_0$ such that $(f_+ - f)(p') < (f_+ - f)(p)$, which contradicts $p$ being a global minimum. Thus $f_+ - f \geq 0$ on all of $\{\rho_1 \leq \rho \leq \rho_0\}$. A similar argument shows that $f - f_- \geq 0$ on $\{\rho_1 \leq \rho \leq \rho_0\}$.
\end{proof}

\begin{proposition}[Existence of barriers for the Jang equation]
    \label{prop:ExistenceBarriers}
    Suppose that $\alpha \in (0,1)$, $\tau \in (n/2,n)$ and that $(M,g,K)$ is a $C^{1,\alpha}_\tau$ asymptotically anti-de Sitter initial data set and let $U := \{\rho < \rho_0\}$. If the warping factor $u$ satisfies
    \begin{equation}
        \label{eq:DefWarp}
        u - \rho^{-1} \in C^{1,0}_0(U),
    \end{equation}
    then there exists a positive function $f_+ \in C^{2,\alpha}_{\tau + 1}(U)$ such that $f_+$ is an upper barrier in the sense of \autoref{def:Barrier} and $f_- := -f_+$ is a lower barrier in the sense of \autoref{def:Barrier}. 
\end{proposition}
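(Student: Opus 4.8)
The plan is to exhibit $f_+$ as an explicit function of $\rho$ alone, built from a slowly decaying ``tail'' $A\rho^{\tau+1}$ that controls the behaviour at infinity and a correction, supported near $\rho_0$, that develops a vertical tangent there. The organizing idea is that by \eqref{eq:AltDefMetric} $b=\rho^{-2}\delta$ is conformal to the flat metric, so $g=\rho^{-2}\hat g$ with $\hat g\to\delta$ and, by \eqref{eq:DefWarp}, $u=\rho^{-1}\hat u$ with $\hat u\to1$; thus $\widetilde g$ is to leading order a conformal rescaling of the flat product metric and $\mathcal J$ is a controlled perturbation of a model operator that can be read off from the flat geometry. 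I would work in the $(\rho,\text{angles})$-frame, where $b^{\rho\rho}=(1-2\rho)\rho^2$ and $b^{ab}=(1-2\rho)^{-1}\rho^2\sigma^{ab}$.

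\emph{The tail.} Linearizing \eqref{eq:JangOperator} at $f\equiv0$ gives $\mathcal J(0)=-\tr_g K$ and $D\mathcal J|_0(\phi)=u\Delta^g\phi+2\langle\nabla u,\nabla\phi\rangle_g$; a direct model computation (passing to Euclidean quantities via $b=\rho^{-2}\delta$) yields $\Delta^b(\rho^s)=s(s-n+1)(1-2\rho)\rho^s-sn\rho^{s+1}$ and $\langle\nabla^b\rho^{-1},\nabla^b\rho^s\rangle_b=-s(1-2\rho)\rho^{s-1}$, hence $D\mathcal J|_0(A\rho^s)=A\,s\,(s-n-1)\,\rho^{s-1}+O(\rho^s)$. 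With $s=\tau+1$, the hypothesis $\tau<n$ makes the leading coefficient $A(\tau+1)(\tau-n)$ negative, so choosing $A$ large relative to a constant $C$ with $|\tr_g K|\le C\rho^\tau$ (finite by the asymptotics on $K$) gives $\mathcal J(A\rho^{\tau+1})\le\big(C-A(\tau+1)(n-\tau)\big)\rho^\tau+o(\rho^\tau)<0$ for $\rho$ small. The $o(\rho^\tau)$ collects the quadratic-in-gradient corrections ($O(\rho^{3\tau})=o(\rho^\tau)$ as $\tau>0$, since $u^2|d(A\rho^{\tau+1})|^2_g=O(\rho^{2\tau})$), the $\nabla u,\nabla^2u$ terms bounded via \autoref{lem:WarpingBounds} and \eqref{eq:DefWarp}, and the errors from $g-b$ and $K$. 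So $A\rho^{\tau+1}$ is already an upper barrier on $\{\rho<\rho_1\}$ for some $\rho_1>0$.

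\emph{Near $\rho_0$, and gluing.} Since $A\rho^{\tau+1}$ has no vertical tangent at $\rho_0$, I would instead take $f_+=f_+(\rho)$ with $w:=\partial_\rho f_+>0$ equal to $A(\tau+1)\rho^\tau$ on $\{\rho\le\rho_0/2\}$, increasing, and blowing up like $B(\rho_0-\rho)^{-1/2}$ as $\rho\to\rho_0$, with $\rho_0$ chosen small enough (depending on $A$) that the tail estimate covers $\{\rho\le\rho_0/2\}$. Near $\rho_0$, where $u^2|df_+|^2_g\to\infty$, decomposing $\mathcal J(f_+)$ into its radial Hessian, angular, $\nabla u$ and $K$ parts, one finds the radial Hessian and $\nabla u$ parts are $O\big((\partial_\rho f_+)^{-2}\big)$, the $K$ part is $O(\rho_0^\tau)$, and the angular part tends to $-(n-1)\rho(1-\rho)/\sqrt{g^{\rho\rho}}=-(n-1)+O(\rho_0)$; this last, definite, negative contribution is geometrically the transverse mean curvature of the vertical cylinder over $\{\rho=\text{const}\}$ — exactly the flat picture. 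Hence $\mathcal J(f_+)<0$ near $\rho_0$ for $\rho_0$ small and $B$ large. On the intervening annulus I would use that $\partial_\rho f_+\ge0$ and $g^{ab}\Gamma^\rho_{ab}>0$ near infinity, so the angular part of $\mathcal J(f_+)$ is always $\le0$, and it then suffices to dominate the remaining terms by enlarging $A,B$ and shrinking $\rho_0$. The resulting $f_+$ is positive, lies in $C^{2,\alpha}_{\mathrm{loc}}(U)\cap C^0(\{\rho\le\rho_0\})$ with $|f_+|=O(\rho^{\tau+1})$ near infinity, and satisfies $\lim_{\rho\to\rho_0-}\partial_\rho f_+=\infty$ and $\mathcal J(f_+)<0$, i.e.\ it is an upper barrier.

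\emph{The lower barrier, and the main obstacle.} From \eqref{eq:JangOperator} one has $\mathcal J(-f)=-\mathcal J(f)-2\big(g^{ij}-\tfrac{u^2f^if^j}{1+u^2|df|^2_g}\big)K_{ij}$, whose correction term is $O(\rho^\tau)$ near infinity and $O(\rho_0^\tau)$ near $\rho_0$ (its $\rho\rho$-entry vanishing in the large-gradient regime); arranging $A$ large and $\rho_0$ small already at the outset so that $-\mathcal J(f_+)$ dominates this correction, one gets $\mathcal J(-f_+)>0$, and since $\partial_\rho(-f_+)\to-\infty$, $f_-:=-f_+$ is a lower barrier as required. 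I expect the delicate step to be the intervening annulus of the gluing: there the perturbative estimates of the tail are unavailable, the quadratic-gradient terms are not small, and the large-gradient asymptotics have not yet taken over, so all terms of \eqref{eq:JangOperator} are of comparable size, and one has to choose the interpolating profile together with $A,B,\rho_0$ so that the (always non-positive) angular term uniformly dominates the rest on that annulus.
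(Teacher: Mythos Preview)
Your approach is genuinely different from the paper's. The paper does not split into a tail regime plus a large-gradient regime; instead it invokes the Malec--Murchadha substitution $f_+'=\xi_+/\sqrt{1-\xi_+^2}$, under which the key identity $(1+u^2|df_+|_g^2)(1-\xi_+^2)=1+O(\rho)$ collapses $\mathcal J(f_+)<0$ to a single first-order differential inequality
\[
\xi_+' - \tfrac{\xi_+}{\rho}\bigl(n-\xi_+^2-C_0\rho\bigr) + C_0\rho^{\tau-1} < 0,
\]
and then writes down one explicit global solution $\xi_+(\rho)=\bigl[(1-\lambda)(\rho_0/\rho)^{n/2}+\lambda(\rho_0/\rho)^{\tau}\bigr]^{-1}$ valid on all of $(0,\rho_0]$. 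Your tail and large-gradient analyses are both sound and recover the two ends of this picture, and your observation that the angular term has a sign is the right organizing principle; what the paper buys with its substitution is that the intermediate regime simply never appears as a separate case.

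The gap in your sketch is exactly the gluing annulus you flag. Saying ``enlarge $A,B$ and shrink $\rho_0$'' is not enough: translated into the $\xi$-variable, the constraint you must satisfy is essentially $\xi'<\tfrac{\xi}{\rho}(n-\xi^2)$, which bounds the logarithmic growth rate of $\xi$ by $n/\rho$. If your tail is $A(\tau+1)\rho^\tau$ up to $\rho_0/2$ and you then shrink $\rho_0$ with $A$ fixed, $\xi(\rho_0/2)\sim A\rho_0^\tau\to 0$, and integrating the differential inequality from $\rho_0/2$ to $\rho_0$ shows $\xi$ cannot reach $1$---so the vertical tangent cannot form. If instead you let $A$ grow like $\rho_0^{-\tau}$ to keep $\xi(\rho_0/2)$ bounded below, then $u^2|df_+|_g^2\sim A^2\rho^{2\tau}$ is of order one on the whole tail region and your linearization is no longer valid there. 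So the parameters cannot be chosen independently, and the radial Hessian term (which carries $f''$) is not dominated by the angular term without specifying the profile and checking the inequality directly---which is precisely what the paper's explicit $\xi_+$ accomplishes.
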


\begin{proof}
    Throughout the proof, we let $C$ denote a constant that may differ from line to line but that may depend only on the initial data $(M,g,K)$ and $u$. The bound $|K|_g \leq C\rho^\tau$ gives
    \begin{equation*}
        \mathcal{J}(f) = \left(g^{ij} - \frac{u^2f^if^j}{1 + u^2|df|_g^2}\right)\frac{u\Hess_{ij}f + f_iu_j + u_if_j}{(1 + u^2|df|_g^2)^{1/2}} + O(\rho^\tau).
    \end{equation*}
    Note that when $f = f(\rho)$, we have $f^\alpha = g^{\alpha\rho}f_\rho$, $f^\rho = g^{\rho \rho} f_\rho$, $|df|_g^2 = f^\rho f_\rho,$ and hence
    \begin{align*}
        \begin{split}
        g^{\rho \rho} - \frac{u^2 f^\rho f^\rho}{1 + u^2|df|_g^2} 
        & = \frac{g^{\rho \rho}}{1 + u^2 |df|_g^2}, \\
        g^{\rho \alpha} - \frac{u^2 f^\rho f^\alpha}{1 + u^2|df|_g^2} 
        & = \frac{g^{\rho \alpha}}{1 + u^2 |df|_g^2}, \\
        g^{\alpha \beta} - \frac{u^2 f^\alpha f^\beta}{1 + u^2|df|_g^2} 
        & = g^{\alpha \beta} - g^{\rho \alpha}g^{\rho \beta}(g^{\rho \rho})^{-1}\frac{u^2 |df|_g^2}{1 + u^2|df|_g^2}.
        \end{split}
    \end{align*}
    Similarly, for $f = f(\rho)$ we find that
    \begin{align*}
        \begin{split}
        u\Hess_{\rho \rho}f + f_\rho u_\rho + u_\rho f_\rho & = u\left(f'' - \Gamma^{\rho}_{\rho \rho} f' + \frac{2u_\rho}{u}f'\right), \\
        u\Hess_{\rho \alpha}f + f_\rho u_\alpha + u_\rho f_\alpha & = u\left(-\Gamma^{\rho}_{\rho \alpha} + \frac{u_\alpha}{u}\right)f', \\
        u\Hess_{\alpha \beta}f + f_\alpha u_\beta + u_\alpha f_\beta & = -u\Gamma^{\rho}_{\alpha \beta}f'.
        \end{split}
    \end{align*}
    In summary, in the case when $f = f(\rho)$ we obtain
    \begin{align}
        \label{eq:J2'}
        \begin{split}
            \mathcal{J}(f) 
            & = \frac{ug^{\rho \rho}}{(1 + u^2 |df|_g^2)^{3/2}}\left(f'' - \Gamma^{\rho}_{\rho \rho} f' + \frac{2u_\rho}{u}f'\right)
            + \frac{2ug^{\rho \alpha}}{(1 + u^2 |df|_g^2)^{3/2}}\left(-\Gamma^{\rho}_{\rho \alpha} + \frac{u_\alpha}{u}\right)f'\\
            & \quad - \frac{u\Gamma^{\rho}_{\alpha \beta}f'}{(1 + u^2|df|_g^2)^{1/2}}\left(g^{\alpha \beta} - g^{\rho \alpha}g^{\rho \beta}(g^{\rho \rho})^{-1}\frac{u^2 |df|_g^2}{1 + u^2|df|_g^2}\right) + O(\rho^\tau).
        \end{split}
    \end{align}
    As observed by Malec and Murchadha \cite{MalecMurchadha2004}, in the presence of spherical symmetry there is a substitution that transforms the Jang equation into an ordinary differential equation. We employ this fact here and set
    \begin{equation}
        \label{eq:Ansatzf'}
        f_+ := f_+(\rho) = \int_0^\rho \frac{\xi_+(s)}{\sqrt{1 - \xi_+(s)^2}}ds,
    \end{equation}
    for some $\xi_+: [0,\rho_0] \to [0,1]$ to be determined. We require that $\xi_+(\rho_0) = 1$, so that
    \begin{equation}
        \label{eq:fdrho'}
        \partial_\rho f_+(\rho) = f_+'(\rho) = \frac{\xi_+(\rho)}{\sqrt{1 - \xi_+(\rho)^2}},
    \end{equation}
    implies that the first condition of \eqref{eq:BarrierConditionUpper} is fulfilled by $f_+$. From \eqref{eq:Ansatzf'} it follows that
    \begin{equation}
        \label{eq:fd2rho'}
        f_+'' = \frac{\xi_+'}{(1-\xi_+^2)^{3/2}}.
    \end{equation}
    In the view of the identities \eqref{eq:fdrho'}, and \eqref{eq:fd2rho'} we obtain from \eqref{eq:J2'}
    \begin{align}
        \label{eq:J3'}
        \begin{split}
            & \frac{\mathcal{J}(f_+)(1 + u^2|df_+|_g^2)^{3/2}(1-\xi_+^2)^{3/2}}{ug^{\rho\rho}} \\
            & < \xi_+' - \Gamma^{\rho}_{\rho \rho} \xi_+(1 - \xi_+^2) + \frac{2u_\rho}{u}\xi_+(1 - \xi_+^2) + \frac{2g^{\alpha\rho}}{g^{\rho \rho}}\left(-\Gamma^{\rho}_{\rho \alpha} + \frac{u_\alpha}{u}\right)\xi_+(1 - \xi_+^2)\\
            & \quad - \frac{\Gamma^{\rho}_{\alpha \beta}(1 + u^2|df_+|_g^2)(1 - \xi_+^2)\xi_+}{g^{\rho \rho}}\left(g^{\alpha \beta} - g^{\rho \alpha}g^{\rho \beta}(g^{\rho \rho})^{-1}\frac{u^2 |df_+|_g^2}{1 + u^2|df_+|_g^2}\right) \\
            & \quad + \frac{C\rho^\tau (1 + u^2|df_+|_g^2)^{3/2}(1-\xi_+^2)^{3/2}}{ug^{\rho \rho}}.
        \end{split}
    \end{align}
    Furthermore, the fact that $(M,g,K)$ is $C^{1,\alpha}_\tau$ asymptotically anti-de Sitter, together with equations \eqref{eq:AltDefMetric} and \eqref{eq:DefWarp} implies the estimates
    \begin{equation}
        \label{eq:TonsOfEstimates}
        \begin{aligned}
            g^{\rho \rho} & = \rho^2(1 - 2\rho) + O(\rho^{\tau + 2}),
            & g^{\rho \alpha} & = O(\rho^{\tau + 2}),
            & g^{\alpha \beta} & = \frac{\rho^2\sigma^{\alpha \beta} }{1-2\rho}+ O(\rho^{\tau + 2}), \\
            \Gamma_{\rho\rho}^\rho & = \frac{-1}{\rho}\frac{1 - 3\rho}{1 - 2\rho} + O(\rho^{\tau - 1}),
            & \Gamma_{\rho\alpha}^\rho & = O(\rho^{\tau - 1}),
            & \Gamma_{\alpha \beta}^\rho & = \sigma_{\alpha \beta}(\rho^{-1} + O(1)), \\
            u &= \frac{1}{\rho} + O(1), 
            & \frac{u_\rho}{u} & = \frac{-1}{\rho} + O(1),
            & \frac{u_\alpha}{u} & = O(\rho).
        \end{aligned}
    \end{equation}
    As a consequence of \eqref{eq:TonsOfEstimates} and \eqref{eq:fdrho'} we have
    \begin{align}
        \label{eq:SphericalMetric}
        g^{\alpha \beta} - g^{\rho \alpha}g^{\rho \beta}(g^{\rho \rho})^{-1}\frac{u^2 |df_+|_g^2}{1 + u^2|df_+|_g^2} 
        & = \frac{\rho^2\sigma^{\alpha \beta}}{1 - 2\rho} + O(\rho^{\tau + 2})
    \end{align}
    and
    \begin{align}
        \label{eq:KeyCancellation}
        \begin{split}
            (1 + u^2|df_+|_g^2)(1 - \xi_+^2) 
            & = \left(1 + \frac{u^2g^{\rho \rho}\xi_+^2}{1 - \xi_+^2}\right)(1-\xi_+^2) \\
            & = 1 - \xi_+^2 + \left(\rho^{-1} + O(1)\right)^2\left(\rho^2(1-2\rho) + O(\rho^{\tau + 2})\right)\xi_+^2 \\
            & = 1 - \xi_+^2 + (1 + O(\rho))\xi_+^2 \\
            & = 1 + O(\rho).
        \end{split}
    \end{align}
    Combining \eqref{eq:J3'} with the estimates \eqref{eq:TonsOfEstimates}, \eqref{eq:SphericalMetric} and \eqref{eq:KeyCancellation} it follows that
    \begin{align*}
        & \frac{\mathcal{J}(f_+)(1 + u^2|df_+|_g^2)^{3/2}(1-\xi_+^2)^{3/2}}{ug^{\rho\rho}} \\
        & < \xi_+' + \frac{1 + O(\rho)}{\rho}\xi_+(1 - \xi_+^2) - \frac{2 + O(\rho)}{\rho}\xi_+(1 - \xi_+^2) + 2\left(O(\rho^{\tau - 1}) + O(\rho)\right)\xi_+(1 - \xi_+^2)\\
        & \quad - \frac{\sigma_{\alpha \beta}(\rho^{-1} + O(1))(1 + O(\rho))\xi_+}{\rho^2(1 + O(\rho))}\bigg(\frac{\rho^2\sigma^{\alpha \beta}}{1 + O(\rho)} + O(\rho^{\tau + 2})\bigg) + O\left(\rho^{\tau-1}\right) \\
        & = \xi_+' - \frac{1 + O(\rho)}{\rho}\xi_+(1 - \xi_+^2)
        - \frac{(1 + O(\rho))\xi_+(n-1 + O(\rho))}{\rho} + O(\rho^{\tau - 1}),
    \end{align*}
    which in turn yields the inequality
    \begin{equation*}
        \frac{\mathcal{J}(f_+)(1 + u^2|df_+|_g^2)^{3/2}(1-\xi_+^2)^{3/2}}{ug^{\rho\rho}} < \xi_+' - \frac{\xi_+}{\rho}(n - \xi_+^2 - C_0\rho) + C_0\rho^{\tau - 1},
    \end{equation*}
    for some fixed constant $C_0 > 0$ that only depends on $(M,g,K)$ and $u$. 

    We now seek a function $\xi_+: [0,\rho_0] \to [0,1]$ such that $\xi_+(\rho_0) = 1$ and
    \begin{equation}
        \label{eq:ConditionsXi}
        \xi_+' - \frac{\xi_+}{\rho}(n - \xi_+^2 - C_0\rho) + C_0\rho^{\tau - 1} < 0,
    \end{equation}
    so that $f_+$ defined by \eqref{eq:Ansatzf'} is an upper barrier in the sense of \autoref{def:Barrier}. For some $\lambda \in (0,1)$ to be specified later, we will look for $\xi_+$ in the form
    \begin{equation}
        \label{eq:Ansatzk}
        \xi_+(\rho) = \frac{1}{(1-\lambda)(\rho_0/\rho)^{n/2} + \lambda (\rho_0/\rho)^{\tau}}.
    \end{equation} 

    Since $\rho_0/\rho \geq 1$ and $n/2 < \tau$ we have $(\rho_0/\rho)^{n/2} \leq (\rho_0/\rho)^\tau$ and hence for all $\lambda \in [0,1]$ we have $(\rho_0/\rho)^{n/2} \leq (1-\lambda)(\rho_0/\rho)^{n/2} + \lambda (\rho_0/\rho)^\tau \leq (\rho_0/\rho)^\tau$. Consequently
    \begin{equation}
        \label{eq:UpperLowerk}
        (\rho/\rho_0)^\tau \leq \xi_+ \leq (\rho/\rho_0)^{n/2}.
    \end{equation}
    Defining 
    \begin{align}
        \label{eq:Gamma}
        \gamma & := \frac{n}{2} + \frac{\lambda(\tau - n/2)}{\lambda + (1-\lambda)(\rho/\rho_0)^{\tau - n/2}},
    \end{align}
    we obtain $\xi_+' = \gamma\xi_+\rho^{-1}$. We  conclude that for $\xi_+$ as in \eqref{eq:Ansatzk}
    \begin{equation}
        \label{eq:dummy}
        \xi_+' - \frac{\xi_+}{\rho}(n - \xi_+^2 - C_0\rho) + C_0\rho^{\tau - 1} < -(n-\gamma - (\rho/\rho_0)^n - C_0\rho)\frac{\xi_+}{\rho} + C_0\rho^{\tau - 1}.
    \end{equation}
    Next let $\rho_0 > 0$ be such that $C_0\rho_0 + C_0\rho_0^{\tau} < (n-\tau)/4$, then we have
    \begin{align}
        \label{eq:BarrierKey'}
        n-\gamma - (\rho/\rho_0)^n - C_0\rho_0 - C_0\rho_0^{\tau} > \frac{3n + \tau}{4} - \gamma - (\rho/\rho_0)^n.
    \end{align}
    Let $F:[0,1]\times (0,1) \to \mathbb{R}$ be given by
    \begin{equation}
        \label{eq:DefF}
        F(x,\lambda) = \frac{\lambda (\tau - n/2)}{\lambda + (1-\lambda)x^{\tau-n/2}}, 
    \end{equation}
    so that $0 \leq F(x,\lambda) \leq \tau - n/2$. Let $x_0 := (3(n-\tau)/8)^{1/n} > 0$ and note that $\lim_{\lambda \to 0^+} F(x_0,\lambda) = 0$. Thus we may choose $\lambda \in (0,1)$ such that $F(x_0,\lambda) < 1/8$. Now for any $x \in [0,x_0]$
    \begin{align*}
        \frac{n + \tau}{4} - F(x,\lambda) - x^n
        & \geq \frac{n + \tau}{4} - (\tau - n/2) - x_0^n = \frac{3(n - \tau)}{4} - \frac{3(n-\tau)}{8} > 0.
    \end{align*}
    Since $\tau > n/2$, the function $F(x,\lambda)$ is decreasing in $x$, hence for $x \in [x_0,1]$ we have
    \begin{align*}
        F(x,\lambda) + x^n \leq F(x_0,\lambda) + 1 < \frac{9}{8} = \frac{3 + 3/2}{4} \leq \frac{n + \tau}{4}.
    \end{align*}
    In summary, for this choice of $\lambda \in (0,1)$, we have
    \begin{equation}
        \label{eq:DesiredF}
        F(x,\lambda) + x^n < \frac{n + \tau}{4},
    \end{equation}
    for all $x \in [0,1]$. Moreover in view of \eqref{eq:Gamma} and \eqref{eq:DefF} we find $\gamma = n/2 + F(\rho/\rho_0,\lambda)$, which combined with \eqref{eq:BarrierKey'} and \eqref{eq:DesiredF} implies
    \begin{equation}
        \label{eq:BarrierCrucial}
        \begin{aligned}
            n - \gamma - (\rho/\rho_0)^n - C_0\rho_0 - C_0\rho_0^{\tau} 
            & > \frac{3n + \tau}{4} - \frac{n}{2} - \bigg(F(\rho/\rho_0,\lambda) + (\rho/\rho_0)^n\bigg) \\
            & > \frac{3n + \tau}{4} - \frac{n}{2} - \frac{n + \tau}{4} = 0.
        \end{aligned}
    \end{equation}
    Equation \eqref{eq:BarrierCrucial} implies $n - \gamma - (\rho/\rho_0)^n - C_0\rho > C_0\rho_0^\tau$ which combined with \eqref{eq:UpperLowerk} and \eqref{eq:dummy} yields
    \begin{align*}
        \xi_+' - \frac{\xi_+}{\rho}(n - \xi_+^2 - C_0\rho) + C_0\rho^{\tau - 1}
        & < -(n-\gamma - (\rho/\rho_0)^n - C_0\rho)\frac{\rho^{\tau - 1}}{\rho_0^\tau} + C_0\rho^{\tau - 1} \\
        & < C_0\rho^{\tau - 1} - C_0\rho^{\tau - 1}
        = 0,
    \end{align*}
    which we wanted to show. In conclusion, the function $f_+(\rho) = \int_0^\rho \frac{\xi_+(s)}{\sqrt{1 - \xi_+(s)^2}}ds$ satisfies
    \begin{align*}
        \lim_{\rho \to \rho_0^-} \partial_\rho f_+ = \infty, \quad \mathcal{J}(f_+) < 0, \quad f_+ > 0,
    \end{align*}
    and due to the bound
    \begin{align*}
        \int_{\rho_0/2}^{\rho_0}|f_+'(s)|ds < \int_{\rho_0/2}^{\rho_0} \frac{1}{\sqrt{1 - \xi_+(s)^2}} ds < C\int_0^C x^{-1/2}dx = 2C^{3/2} < \infty,
    \end{align*} 
    it follows that $f_+$ extends continuously to the boundary $\{\rho = \rho_0\}$ of $\{\rho \leq \rho_0\}$. Hence $f_+$ is indeed an upper barrier of the generalized Jang equation in the sense of \autoref{def:Barrier}. Finally a computation shows that $\xi_+ \in C^{2,\alpha}_\tau$ and hence $f_+ \in C^{2,\alpha}_{\tau + 1}$ .

    If we now let $\xi_- := -\xi_+$, then $f_- = -f_+$. By construction, $f_- < 0$, it satisfies the first condition of \eqref{eq:BarrierConditionLower} and we have $f_- \in C^{2,\alpha}_{\tau + 1}$. The second condition of \eqref{eq:BarrierConditionLower} is also satisfied since
    \begin{align*}
        \frac{\mathcal{J}(f_-)(1 + u^2|df_-|_g^2)^{3/2}(1-\xi_-^2)^{3/2}}{ug^{\rho\rho}}
        & > \xi_-' - \frac{\xi_-}{\rho}(n - \xi_-^2 - \epsilon) - C_0\rho^{\tau - 1} \\
        & = -\left(\xi_+' - \frac{\xi_+}{\rho}(n - \xi_+^2 - \epsilon) + C_0\rho^{\tau - 1}\right)
        > 0,
    \end{align*}
    where in the last line we have used that $\xi_+$ satisfies \eqref{eq:ConditionsXi}.
\end{proof}


\section{A regularized boundary value problem}
\label{sec_RegularBVP}
In this section we will prove the following proposition.\\
\begin{proposition}
    \label{RegularizedBVP}
    Suppose that $\alpha \in (0,1)$ and $\tau > 0$ are real numbers, that $(M,g,K)$ is a $C^{2,\alpha}_\tau$ asymptotically anti-de Sitter initial data set and that $u\in C^{2,\alpha}_\text{loc}(M)$ satisfies
    \begin{align*}
        u - \rho^{-1} \in C^{2,0}_0(M).
    \end{align*}
    Suppose furthermore that $\Omega \subset M$ is a bounded domain such that $\partial \Omega$ is $C^{3,\alpha}$ and that the mean curvature $H_{\partial \Omega}$ of $\partial \Omega$ computed as the tangential divergence of the outward pointing unit normal satisfies
    \begin{equation*}
        H_{\partial \Omega} - |\tr_{\partial \Omega} K| > 0.
    \end{equation*}
    Lastly suppose that $\epsilon > 0$ and $\phi \in C^{3,\alpha}(\overline \Omega)$, if either $\epsilon > 0$ is sufficiently small or $\phi \equiv 0$ then there exists $f \in C^{3,\alpha}(\overline \Omega)$ such that
    \begin{align}
        \label{RegularJang}
        \underbrace{\left(g^{i j} - \frac{u^2f^i f^j}{1 + u^2 |df|_g^2}\right)\left(\frac{u\Hess_{i j}(f) + u_i f_j + f_i u_j}{\sqrt{1 + u^2|df|_g^2}} - K_{i j}\right)}_{ = \mathcal J(f)} & = \epsilon f, & x \in \Omega, \\ 
        \label{RegularJangBoundary}
        f & = \phi, & x \in \partial \Omega.
    \end{align}
\end{proposition}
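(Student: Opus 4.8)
The plan is to treat \eqref{RegularJang}--\eqref{RegularJangBoundary} as a quasilinear elliptic Dirichlet problem and to solve it by the method of continuity, in the form of the Leray--Schauder fixed point theorem, following the scheme used for the classical Jang equation in \cite[Section 2]{EichmairReduction} and \cite[Section 4]{SakovichJang}. Writing $\overline g^{ij} := g^{ij} - u^2 f^i f^j/(1+u^2|df|_g^2)$ for the inverse graph metric, the operator $\mathcal J$ has principal part $\tfrac{u}{(1+u^2|df|_g^2)^{1/2}}\,\overline g^{ij}\Hess_{ij}f$, and the Cauchy--Schwarz estimate used in the proof of \autoref{lem:ComparisonJang} gives $\overline g^{ij}\xi_i\xi_j \ge |\xi|_g^2/(1+u^2|df|_g^2)$ for every covector $\xi$; hence \eqref{RegularJang} is strictly elliptic, but uniformly elliptic only once $|df|_g$ is controlled. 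By the general theory of quasilinear elliptic Dirichlet problems it is therefore enough to produce an a priori bound $\|f\|_{C^{1,\beta}(\overline\Omega)} \le C$, for some $\beta \in (0,1)$, valid for every solution $f$ and uniform along the continuity family described below; the improvement to $f \in C^{3,\alpha}(\overline\Omega)$ is then a matter of Schauder bootstrapping, using that $g, K \in C^{2,\alpha}$, $u \in C^{2,\alpha}_{\mathrm{loc}}$, $\partial\Omega$ is $C^{3,\alpha}$ and $\phi \in C^{3,\alpha}(\overline\Omega)$.

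The a priori bound is assembled in three steps. \emph{The $C^0$ estimate.} At an interior maximum $p$ of a solution $f$ one has $df(p) = 0$ and $\Hess f(p) \le 0$, so, since $u>0$, $\mathcal J(f)(p) = u(p)\,\Delta^g f(p) - \tr_g K(p) \le \|\tr_g K\|_{C^0(\overline\Omega)}$, whence $\epsilon f(p) \le \|\tr_g K\|_{C^0(\overline\Omega)}$; arguing symmetrically at a minimum yields $\|f\|_{C^0(\overline\Omega)} \le \max\{\|\phi\|_{C^0(\partial\Omega)},\, \epsilon^{-1}\|\tr_g K\|_{C^0(\overline\Omega)}\}$. \emph{The boundary gradient estimate.} Near $\partial\Omega$ one constructs upper and lower barriers of the form $\phi \pm \psi(d)$, where $d = \mathrm{dist}_g(\cdot,\partial\Omega)$ and $\psi$ is a concave function with $\psi(0) = 0$ and $\psi'(0)$ large, so that the barrier graphs have large gradient near $\partial\Omega$; inserting such a function into $\mathcal J(\cdot) - \epsilon(\cdot)$ and using the asymptotics of $\Hess d$ as $d \to 0$ together with the large-gradient limit of the $\overline K$-trace term — in which the relevant trace becomes $\tr_{\partial\Omega} K$ — the required differential inequality reduces, to leading order, to the strict positivity of $H_{\partial\Omega} - |\tr_{\partial\Omega}K|$ assumed in the hypotheses, provided the remaining error terms, those coming from the derivatives of $\phi$ and from the zeroth order contribution $\epsilon(\phi \pm \psi)$, can be absorbed; it is exactly here that the hypothesis that $\epsilon$ be sufficiently small, or that $\phi \equiv 0$ (in which case the $\phi$-error terms are absent), is used. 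A comparison argument of the type proving \autoref{lem:ComparisonJang} then bounds $|df|_g$ on $\partial\Omega$. \emph{The interior gradient estimate.} One bounds $\sup_{\overline\Omega}|df|_g$ in terms of $\sup_{\partial\Omega}|df|_g$, $\|f\|_{C^0(\overline\Omega)}$ and the data by a test-function/Bochner computation adapted to the Jang-type structure of $\mathcal J$, as in \cite{SchoenYau2} and its adaptations in \cite{EichmairReduction, SakovichJang}; the new ingredients are the warping factor $u$, whose first and second derivatives are controlled by $u$ itself via \autoref{lem:WarpingBounds} (here the hypothesis $u - \rho^{-1} \in C^{2,0}_0(M)$ enters), and the extra term $\epsilon f$, which is harmless since it is bounded, by the $C^0$ estimate, for fixed $\epsilon$.

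With $\|f\|_{C^1(\overline\Omega)}$ bounded, \eqref{RegularJang} becomes a uniformly elliptic equation of prescribed-mean-curvature type with bounded coefficients, and standard regularity theory (De Giorgi--Nash--Moser, exploiting the divergence structure of the mean-curvature operator, together with the boundary barriers) promotes the bound to $\|f\|_{C^{1,\beta}(\overline\Omega)} \le C$; the coefficients of the equation then lie in $C^{0,\alpha}$, so interior and boundary Schauder estimates give $f \in C^{2,\alpha}(\overline\Omega)$ and, after one further differentiation, $f \in C^{3,\alpha}(\overline\Omega)$. To run the method of continuity, consider the family of problems obtained by replacing $(K,\phi)$ with $(tK, t\phi)$ for $t \in [0,1]$: at $t = 0$ the problem is solved by $f \equiv 0$, since the left-hand side of \eqref{RegularJang} with $K$ replaced by $0$ vanishes on $f\equiv 0$ and the boundary datum is then $0$; the a priori estimates above hold uniformly in $t \in [0,1]$, so the set of parameters $t$ admitting a $C^{3,\alpha}(\overline\Omega)$ solution is closed, and it is open because the linearization of $f \mapsto \mathcal J(f) - \epsilon f$ is a uniformly elliptic operator with zeroth order coefficient $-\epsilon < 0$ (the operator $\mathcal J$ having no explicit dependence on the value of $f$), hence an isomorphism from $\{v \in C^{3,\alpha}(\overline\Omega) : v|_{\partial\Omega} = 0\}$ onto $C^{1,\alpha}(\overline\Omega)$. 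Therefore the problem is solvable at $t = 1$, which is the assertion.

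I expect the main obstacle to be the interior gradient estimate for the generalized Jang operator: this is the genuinely geometric part of the argument, where the precise structure $u\,\Hess f + du\otimes df + df\otimes du$ of the numerators in \eqref{eq:SecFundFormGraph} and \eqref{eq:MeanCurvatureGraph} must be exploited, and it is here that the decay hypothesis on $u$ enters, through \autoref{lem:WarpingBounds}, to keep the $u$-dependent terms under control. A secondary technical difficulty is the bookkeeping in the boundary barrier construction that quantifies how small $\epsilon$ must be — in terms of $\Omega$, the data and $\|\phi\|_{C^{3,\alpha}(\overline\Omega)}$ — in the case $\phi \not\equiv 0$.
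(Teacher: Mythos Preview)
Your proposal is correct and follows essentially the same route as the paper: a continuity method along the family $(K,\phi)\mapsto (tK,t\phi)$, with a priori $C^0$, interior $C^1$, boundary $C^1$, and then $C^{2,\alpha}$ estimates, the openness coming from invertibility of the linearization (zeroth order term $-\epsilon<0$) and closedness from the a priori bounds. The only cosmetic difference is that the paper uses linear barriers $s\phi \pm B\,d$ rather than $\phi \pm \psi(d)$ with concave $\psi$; either works, and both pinpoint the smallness of $\epsilon$ (or $\phi\equiv 0$) at exactly the place you identify, namely absorbing the $\epsilon|\phi|$ contribution into the margin $H_{\partial\Omega}-|\tr_{\partial\Omega}K|>0$.
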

We note that similar results have been obtained in Eichmair \cite[Lemma $2.2$]{EichmairPlateau}, Eichmair \cite[Proposition $5$]{EichmairReduction}, Andersson, Eichmair and Metzger \cite[Theorem $3.1$]{AnderssonEichmairMetzger} and Sakovich \cite[Lemma $4.2$]{SakovichJang}, see also references therein. To prove Proposition \ref{RegularizedBVP} we will apply the continuity method to the related boundary value problem
\begin{align}
    \label{RegularJangS}
    \left(g^{i j} - \frac{u^2f^i f^j}{1 + u^2 |df|_g^2}\right)\left(\frac{u\Hess_{i j}(f) + u_i f_j + f_i u_j}{\sqrt{1 + u^2|df|_g^2}} - sK_{i j}\right) & = \epsilon f, & x \in \Omega, \\ 
    \label{RegularJangSBoundary}
    f & = s\phi, & x \in \partial \Omega,
\end{align}
for $s \in [0,1]$.

\subsection{A-priori estimates}
\label{sec_Apriori}
\begin{proposition}
    \label{prop:AprioriEstimates}
    Suppose that $f_s \in C^{3,\alpha}(\overline \Omega)$ satisfies \eqref{RegularJangS}-\eqref{RegularJangSBoundary}. Then there is a constant $C$ depending on $(M, g, K)$, $\epsilon > 0$, $\Omega$, $\phi$, $|u|_{C^2(\overline \Omega)}$, $\inf_M u$ and $\alpha$, but independent of $s \in [0,1]$, such that
    \begin{equation*}
        |f_s|_{C^{2,\alpha}(\overline \Omega)} \leq C.
    \end{equation*}
\end{proposition}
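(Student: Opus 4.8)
The plan is to establish the bound in the three standard stages for a quasilinear prescribed--mean--curvature equation --- a $C^0$ estimate, a gradient estimate, and then a bootstrap to $C^{2,\alpha}$ by elliptic regularity --- along the lines of Eichmair \cite[Section~2]{EichmairReduction} and Sakovich \cite[Section~4]{SakovichJang}. Throughout, $C$ denotes a constant of the type permitted in the statement; independence from $s \in [0,1]$ will be automatic because $s$ only ever multiplies the data $K$ and $\phi$ by a factor of modulus at most one.

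\emph{The $C^0$ estimate.} On $\partial\Omega$ we have $|f_s| = |s\phi| \leq |\phi|_{C^0(\overline\Omega)}$. If $f_s$ attains an interior maximum at $p \in \Omega$, then $df_s(p) = 0$ and $\Hess f_s(p) \leq 0$, so evaluating \eqref{RegularJangS} at $p$ gives $\epsilon f_s(p) = u(p)\,\Delta^g f_s(p) - s\,\tr_g K(p) \leq \sup_{\overline\Omega}|\tr_g K|$, and likewise at an interior minimum. Hence $|f_s|_{C^0(\overline\Omega)} \leq \max\{|\phi|_{C^0(\overline\Omega)},\, \epsilon^{-1}\sup_{\overline\Omega}|\tr_g K|\}$; this is where $\epsilon > 0$ is needed.

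\emph{The gradient estimate}, which I expect to be the main obstacle. For the boundary gradient estimate I would work in a one-sided collar $\{d < \delta\}$ of $\partial\Omega$, where $d := \operatorname{dist}_g(\cdot,\partial\Omega)$ is $C^{3,\alpha}$ for $\delta$ small, and use barriers $w^\pm := s\phi \pm \psi(d)$ with $\psi(0) = 0$, $\psi'$ large and $\psi$ strictly concave. A computation of $\mathcal J_s(w^\pm)$ shows that, as $\psi' \to \infty$, its dominant term is $\mp H_{\partial\Omega} \pm \tr_{\partial\Omega}(sK)$ up to an error that is small when $\delta$ is small; since $H_{\partial\Omega} - |\tr_{\partial\Omega}K| > 0$ on the compact set $\partial\Omega$, one can first fix $\delta$ small and then choose $\psi$ so that $\mathcal J_s(w^+) < \epsilon w^+$ and $\mathcal J_s(w^-) > \epsilon w^-$ on $\{d < \delta\}$, and, after a further increase of $\psi'$ using the $C^0$ bound, so that $w^- \leq f_s \leq w^+$ on $\partial\{d < \delta\}$. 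The comparison principle for the quasilinear operator $f \mapsto \mathcal J_s(f) - \epsilon f$, argued as in \autoref{lem:ComparisonJang}, then yields $w^- \leq f_s \leq w^+$ in the collar; since equality holds on $\partial\Omega$, differentiating bounds the normal derivative of $f_s$ there, and its tangential derivatives are controlled by $|\phi|_{C^1(\overline\Omega)}$. For the full gradient bound I would observe that, by the $C^0$ estimate and the boundedness of $g$, $u$, $du$, $\nabla^2 u$ and $K$ on $\overline\Omega$ (using $|u|_{C^2(\overline\Omega)} < \infty$, cf.\ \autoref{lem:WarpingBounds}), the graph $\Gamma(f_s)$ has mean curvature bounded in $C^0(\overline\Omega)$, and then apply a maximum-principle argument to a quantity of the form $e^{\lambda f_s}(1 + u^2|df_s|_g^2)$, as in the cited works: its maximum over $\overline\Omega$ is attained either at an interior point, where the differential inequality satisfied on the graph forces a bound, or on $\partial\Omega$, where it is controlled by the boundary gradient estimate. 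In either case $|f_s|_{C^1(\overline\Omega)} \leq C$.

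\emph{The bootstrap.} Once $|f_s|_{C^1(\overline\Omega)} \leq C$, equation \eqref{RegularJangS} is uniformly elliptic with ellipticity constants controlled by this $C^1$ bound, and its coefficients are H\"older-$C^\alpha$ in $x$ (as $g \in C^{2,\alpha}_\text{loc}$, $u \in C^{2,\alpha}_\text{loc}$, $K \in C^{1,\alpha}_\text{loc}$) and smooth in $(f, df)$. The De Giorgi--Nash--Moser estimate for quasilinear equations --- interior and, using $\partial\Omega \in C^{3,\alpha}$ and $\phi \in C^{3,\alpha}$, up to the boundary --- gives $|f_s|_{C^{1,\beta}(\overline\Omega)} \leq C$ for some $\beta \in (0,\alpha]$; freezing the coefficients along $f_s$ and invoking linear Schauder estimates gives $|f_s|_{C^{2,\beta}(\overline\Omega)} \leq C$, whence $df_s$ is Lipschitz, hence in $C^{0,\alpha}(\overline\Omega)$, so that the frozen coefficients lie in $C^{0,\alpha}(\overline\Omega)$ and one more application of linear Schauder estimates yields $|f_s|_{C^{2,\alpha}(\overline\Omega)} \leq C$. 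Every constant appearing depends only on the quantities listed in the statement and not on $s$.
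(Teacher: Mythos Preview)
Your outline is correct and follows the same four-stage architecture as the paper (the $C^0$ bound, boundary gradient estimate, interior gradient estimate, bootstrap), with the $C^0$ step and the bootstrap matching essentially verbatim. The boundary barrier you propose, $s\phi \pm \psi(d)$ with $\psi$ concave, is slightly more general than what the paper actually uses: it takes the linear choice $\overline f = s\phi + Br$, $\underline f = s\phi - Br$ and checks directly that for $B$ large, $\mathcal J(\overline f) \to -H_{\mathcal E_r} + \tr_{\mathcal E_r}K + O(B^{-1})$, which suffices by the strict inequality $H_{\partial\Omega} - |\tr_{\partial\Omega}K| > 0$.

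The one genuine methodological difference is the interior gradient estimate. You propose the Korevaar-type test function $e^{\lambda f_s}(1+u^2|df_s|_g^2)$, relying on the bounded-mean-curvature structure of the graph; the paper instead follows the Schoen--Yau route (as in \cite[Section~4]{SchoenYau2} and Han--Khuri \cite[Section~2]{HanKhuri}): it sets $v=|df|_g^2$, differentiates the divergence form of the equation, contracts with $\nabla f$, uses the Ricci commutation identity, and arrives at $\epsilon v \leq C(1+v^{1/2})$ at an interior maximum of $v$. Your approach would also work---indeed the paper itself uses a Spruck/Korevaar-style argument later (\autoref{GradientFromOscillation}) for the unregularized equation---but on the regularized problem the paper's direct Bochner computation exploits the $\epsilon f$ term more transparently and avoids having to adapt the test function to the warping factor.
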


\begin{proof}
    Throughout this proof, the constant $C > 0$ may change from line to line but depends only on $(M, g, K)$, $\epsilon > 0$, $\Omega$, $\phi$, $|u|_{C^2(\overline \Omega)}$, $\inf_M u$ and $\alpha$, and is independent of $s \in [0,1]$. For simplicity of notation, we will write $f$ instead of $f_s$. The proof proceeds in the following steps.

    \emph{Global $C^0$ estimate.} Suppose that $f$ attains its maximum at an interior point $p \in \Omega$. Then $\nabla f(p) = 0$ and $\Hess f(p)$ is negative semi-definite. Hence \eqref{RegularJangS} becomes
    \begin{align*}
        \epsilon f(p) = g^{ij}\left(u\Hess_{i j}(f) - sK_{i j}\right) = u\Delta^g f - s\tr_gK \leq |\tr_g K|.
    \end{align*}
    We get a similar inequality at an interior minimum point. Consequently, 
    \begin{align}
        \label{C0Bound}
        |f_s| \leq \max\left\{\epsilon^{-1}\sup_{\Omega} |\tr_g(K)|,\sup_{\partial \Omega} |\phi| \right\} \leq C \quad \text{on $\overline \Omega$},
    \end{align}
    where $C$ depends only on $(M,g,K)$, $\epsilon > 0$ and $\phi$.

    \emph{Interior $C^1$ estimate.} We set $v:= |df|_g^2$. Let $p \in \Omega$ be an interior maximum point for $v$. Then
    \begin{align}
        \label{vMax}
        v_i = 2f^j\Hess_{ij}f = 0,
    \end{align}
    at $p$ for $i = 1,\dots, n$. Further, we note that
    \begin{align}
        \label{MeanCurvatureFormula}
        \nabla_i \left(\frac{uf^i}{\sqrt{1 + u^2|df|_g^2}}\right) = \left(g^{i j} - \frac{u^2f^i f^j}{1 + u^2|df|_g^2}\right)\frac{u\Hess_{i j}(f) + u_i f_j + f_i u_j}{(1 + u^2|df|_g^2)^{1/2}} - \frac{\langle du,df\rangle_{g}}{(1 + u^2|df|_g^2)^{3/2}}. 
    \end{align}
    Combining \eqref{MeanCurvatureFormula} with \eqref{RegularJangS} we obtain
    \begin{align*}
        \epsilon f & = \nabla_i \left(\frac{uf^i}{\sqrt{1 + u^2|df|_g^2}}\right) + \frac{\langle du,df\rangle_{g}}{(1 + u^2|df|_g^2)^{3/2}} - \left(g^{ij} - \frac{u^2f^i f^j}{1 + u^2|df|_g^2}\right)sK_{ij}.
    \end{align*}
    Following Schoen and Yau \cite[Section $4$]{SchoenYau2} (see also Han and Khuri \cite[Section $2$]{HanKhuri}) we apply $\nabla_k$ to both sides and use the commutation relation $V^l \Ric_{j l} = \nabla_i \nabla_j V^i - \nabla_j \nabla_i V^i$, valid for any vector field $V$. After this we multiply the obtained relation by $f^k$ and sum over $k = 1,\dots, n$. Working at $p$, where \eqref{vMax} holds, we find
    \begin{align}
        \label{STAR}
        \begin{split}
            \epsilon |df|_g^2 
            & = f^k\nabla_i\nabla_k \left(\frac{uf^i}{\sqrt{1 + u^2|df|_g^2}}\right) + \frac{uf^if^k\Ric_{ik}}{\sqrt{1 + u^2|df|_g^2}} + \frac{(\nabla^2u)(\nabla f,\nabla f)}{(1 + u^2|df|_g^2)^{3/2}} \\
            & - \frac{3u\langle du,df\rangle^2|df|_g^2}{(1 + u^2|df|_g^2)^{5/2}} + \frac{2u\langle du,df\rangle f^i f^j}{(1 + u^2|df|_g^2)^2}sK_{ij} - \left(g^{ij} - \frac{u^2f^i f^j}{1 + u^2|df|_g^2}\right)sf^k\nabla_kK_{ij}.
        \end{split}
    \end{align}
    We now estimate the terms in the right hand side of \eqref{STAR} using \autoref{lem:WarpingBounds} as follows
    \begin{align*}
        \left|\frac{uf^i f^k\Ric_{i k}}{\sqrt{1 + u^2|df|_g^2}}\right| 
        & \leq Cv^{1/2}, &
        \left|\frac{(\nabla^2u)(\nabla f,\nabla f)}{(1 + u^2|df|_g^2)^{3/2}}\right| 
        & \leq C, \\
        \left|\frac{3u\langle du,df\rangle^2|df|_g^2}{(1+u^2|df|_g^2)^{5/2}}\right| 
        & \leq C, &
        \left|\frac{2u \langle du,df\rangle f^i f^j}{(1 + u^2|df|_g^2)^2}K_{i j}\right| 
        & \leq C, \\
        \left|\left(g^{i j} -\frac{u^2 f^i f^j}{1 + u^2|df|_g^2}\right)sf^k (\nabla_k K_{i j})\right| 
        & \leq C v^{1/2},
    \end{align*}
    with the constant $C$ depending on $(M, g, K)$, $|u|_{C^2(\overline \Omega)}$ and $\inf_M u$. Equation \eqref{STAR} now yields
    \begin{align}
        \label{STAR2}
        \epsilon v & \leq f^k\nabla_i \nabla_k \left(\frac{uf^i}{\sqrt{1 + u^2|df|_g^2}}\right) + C(1 + v^{1/2}).
    \end{align}

    We now turn to the first term in the right hand side of \eqref{STAR2}. Expanding, we obtain
    \begin{align}
        \label{STAR3}
        \begin{split}
        & f^k\nabla_i \nabla_k \left(\frac{uf^i}{\sqrt{1 + u^2|df|_g^2}}\right)
        = \nabla_i \left[\left(g^{i j} - \frac{u^2f^i f^j}{1 + u^2|df|_g^2}\right)\frac{uf^k \Hess_{k j} (f)}{\sqrt{1 + u^2|df|_g^2}}\right] \\
        & + \frac{f^k\nabla_i f^i u_k + f^kf^i \nabla_i u_k}{(1 + u^2|df|_g^2)^{3/2}}  - \frac{3f^k f^i u_ku u_i|df|_g^2}{(1+u^2|df|_g^2)^{5/2}} - \frac{g^{ij}u\Hess_{kj}(f)g^{k l}\Hess_{l i} (f)}{\sqrt{1 + u^2|df|_g^2}}.
        \end{split}
    \end{align}
    Recalling that $\nabla v = 0$ and that $\Hess v$ is negative semi-definite at $p$, we get using \autoref{lem:WarpingBounds}
    \begin{align*}
        \nabla_i \left[\left(g^{i j} - \frac{u^2f^i f^j}{1 + u^2|df|_g^2}\right)\frac{uf^k \Hess_{k j} (f)}{\sqrt{1 + u^2|df|_g^2}}\right] & = \left(g^{i j} - \frac{u^2f^i f^j}{1 + u^2|df|_g^2}\right)\frac{u\Hess_{ij} v}{2\sqrt{1 + u^2|df|_g^2}} \leq 0, \\
        \left|\frac{f^k f^i (\nabla_i u_k)}{(1 + u^2|df|_g^2)^{3/2}}\right| & \leq \frac{C|\Hess u|_g v}{(1 + u^2v)^{3/2}} \leq C, \\
        \left|\frac{3f^k f^i u_k u u_i|df|_g^2}{(1+u^2|df|_g^2)^{5/2}}\right| & \leq \frac{Cu |du|_g^2v^2}{(1+u^2v)^{5/2}} \leq C.
    \end{align*} 
    Here, the constant $C$ depends on $(M, g, K)$, $|u|_{C^2(\overline \Omega)}$ and $\inf_M u$. It remains to bound the following remaining two terms from the right hand side of \eqref{STAR3}
    \begin{equation*}
        E := \frac{f^k (\nabla_i f^i)u_k}{(1 + u^2|df|_g^2)^{3/2}} - \frac{u|\Hess f|_g^2}{\sqrt{1 + u^2|df|_g^2}}.
    \end{equation*}
    The first term may be estimated by noting that
    \begin{equation*}
        |\nabla_i f^i| \leq |\Hess f|_g \sqrt{n}
        \leq \kappa|\Hess f|_g^2 + \frac{n}{4\kappa}
    \end{equation*}
    where $\kappa > 0$ is arbitrary. Hence
    \begin{equation*}
        E \leq \frac{u|\Hess f|_g^2}{\sqrt{1 + u^2|df|_g^2}}\left(\frac{\kappa |df|_g|du|_g}{u(1 + u^2|df|_g^2)} - 1\right) + \frac{n|du|_g|df|_g}{4\kappa (1 + u^2|df|_g^2)^{3/2}}.
    \end{equation*}
    If $|df|_g|du|_g = 0$ at $p$, then we can bound the above by $0$, if not then one may choose $\kappa = |df|_g^{-1}|du|_g^{-1}u(1 + u^2|df|_g^2)$, to make the term in the parenthesis vanish, obtaining
    \begin{align*}
        E \leq \frac{n|du|_g^2|df|_g^2}{4u(1 + u^2|df|_g^2)^{5/2}} \leq C.
    \end{align*}
    We conclude that
    \begin{equation*}
        \epsilon v \leq C(1 + v^{1/2}),
    \end{equation*}
    at an interior maximum point $p$ of $v$ which implies that $|df|_g(p) \leq C$ where $C$ depends on $(M, g, K)$, $\epsilon$, $|u|_{C^2(\overline \Omega)}$ and $\inf_M u$. 

    \emph{Boundary $C^1$ estimate.} We will derive boundary gradient estimates using the so called barrier method as in Gilbarg and Trudinger \cite[Section 14]{GilbargTrudinger}. For this we will use \autoref{prop:BoundaryGrad}, which is proven in \autoref{appendix_appendixA}. 

    Thus, in order to prove the desired boundary gradient estimate, it suffices to provide the functions $\overline f$ and $\underline f$ as in \autoref{prop:BoundaryGrad}. Since $H_{\partial \Omega} - |\tr_{\partial \Omega} K| > 0$ we may choose $\epsilon > 0$ so that $H_{\partial \Omega} - |\tr_{\partial \Omega} K|  - \epsilon |\phi|> 0$ as well. Using the function $r = \text{dist}(\cdot, \partial \Omega)$ one may foliate a (possibly smaller) neighborhood $U$ of $\partial \Omega$ using the hypersurfaces $\mathcal E_r$ of constant $r$. If $\{x^1,\dots, x^{n-1}\}$ denote coordinates on $\partial \Omega$, then $\{x^1,\dots, x^{n-1},r\}$ are coordinates on $U \cap \overline\Omega$ and in these coordinates the metric can be written as $g = dr^2 + h_r$, where $h_r$ is the induced metric on $\mathcal E_r$. We assume without loss of generality that $U\cap \overline \Omega = \{0 \leq r < r_0\}$ where $r_0$ is chosen so small that $H_{\partial \mathcal E_r} - |\tr_{\partial \mathcal E_r} K|  - \epsilon |\phi|> 0$ for any $r \in [0,r_0)$.

    Since $r$ is a distance function we have $(h_r)^{r r} = 1$, $(h_r)^{\mu r} = 0$, $(h_r)^{\mu \nu} = g^{\mu \nu}$ and $\Gamma_{rr}^r = \Gamma_{r r}^\mu = \Gamma_{r \mu}^r = 0$. The mean curvature of the surfaces $\mathcal E_r$ can be computed as follows
    \begin{equation*}
        H_{\mathcal E_r} = g_r^{\mu \nu}(A_r)_{\mu \nu} = -g_r^{\mu\nu}\langle \nabla_\mu \partial_\nu , -\partial_r \rangle_g = g_r^{\mu \nu}\langle \Gamma_{\mu \nu}^r \partial_r, \partial_r\rangle = (g_r)^{\mu \nu}\Gamma_{\mu \nu}^r,
    \end{equation*}
    where we use Greek indices to denote coordinates on $\mathcal{E}_r$ and $A_r$ is the second fundamental form of $\mathcal E_r$. We now claim that for $B > 0$ sufficiently large, the functions $\overline f = s\phi + Br$ and $\underline f = s\phi - Br$ are boundary barriers satisfying the conditions of \autoref{prop:BoundaryGrad}. To show this, we compute
    \begin{align*}
        \mathcal J(\overline f) 
        & = \left(g_r^{\mu \nu} - \frac{s^2u^2\phi^\mu \phi^\nu}{1 + s^2u^2|d\phi|_{g_r}^2 + u^2B^2}\right)\left(\frac{us\Hess^{\partial \Omega}_{\mu \nu}(\phi) - u\Gamma_{\mu \nu}^r B + su_\mu \phi_\nu + s\phi_\mu u_\nu}{\sqrt{1 + s^2u^2|d\phi|_{g_r}^2 + u^2B^2}} - K_{\mu \nu}\right) \\
        & \quad + \left(0 - \frac{su^2B\phi^\mu}{1 + s^2u^2|d\phi|_{g_r}^2 + u^2B^2}\right)\left(\frac{0 - us\Gamma_{\mu r}^\kappa \phi_\kappa + Bu_\mu + s\phi_\mu u_r}{\sqrt{1 + s^2u^2|d\phi|_{g_r}^2 + u^2B^2}} - K_{\mu r}\right) \\
        & \quad + \left(1 - \frac{u^2 B^2}{1 + s^2u^2|d\phi|_{g_r}^2 + u^2B^2}\right)\left(\frac{2Bu_r}{\sqrt{1 + s^2u^2|d\phi|_{g_r}^2 + u^2B^2}} - K_{rr}\right) \\
        & = -g_r^{\mu \nu}\Gamma_{\mu \nu}^r - \tr_{\mathcal E_r } K + O(B^{-1}) \\
        & < - H_{\mathcal E_r} + |\tr_{\mathcal E_r} K| + CB^{-1}.
    \end{align*}
    A similar computation shows that
    \begin{equation*}
        \mathcal J(\underline f) = H_{\mathcal E_r} - \tr_{\mathcal E_r} K + O(B^{-1}) > H_{\mathcal E_r} - |\tr_{\mathcal E_r} K| - CB^{-1}.
    \end{equation*}
    Consequently, by choosing $B$ large enough depending on $C$, we have the inequalities
    \begin{equation*}
        \mathcal J(\overline f) - \epsilon \overline f < 0 \quad \text{ and } \quad \mathcal J(\underline f) - \epsilon \underline f > 0,
    \end{equation*}
    in $U \cap \overline \Omega$. The boundary condition $\underline f|_{\partial \Omega} = \overline f|_{\partial \Omega} = s\phi$ is automatically fulfilled. We also note that by choosing $B$ large enough we can ensure that
    \begin{align*}
        \underline f|_{\{r = r_0\}} 
        & \leq - \max\left\{\sup_{\Omega} \epsilon^{-1}|\tr_g(K)|, \sup_{\partial \Omega} |\phi| \right\}, \\ 
        \overline f|_{\{r = r_0\}} 
        & \geq \max\left\{\sup_{\Omega} \epsilon^{-1}|\tr_g(K)|, \sup_{\partial \Omega} |\phi| \right\}.
    \end{align*}
    Combining this with \eqref{C0Bound}, we may now apply \autoref{prop:BoundaryGrad} to find that $\sup_{\partial \Omega}|df|_g \leq C$. 

    \emph{$C^{2,\alpha}$ estimate.} In local coordinates we may write $\mathcal J(f) - \epsilon f = 0$ as a quasilinear elliptic equation
    \begin{equation}
        \label{JangQuaslinear}
        a^{ij}(x,Df)D_{ij} f + b(x,f,Df) = 0,
    \end{equation}
    where
    \begin{align*}
        a^{ij}(x,Df) & = \left(g^{ij} - \frac{u^2f^if^j}{1 + u^2|df|_g^2}\right)\frac{u}{\sqrt{1 + u^2|df|_g^2}}, \\
        b(x,f,Df) & = \left(g^{ij} - \frac{u^2f^if^j}{1 + u^2|df|_g^2}\right)\left(\frac{-u\Gamma_{ij}^k f_k + 2u_if_j}{\sqrt{1 + u^2|df|_g^2}} - sK_{ij}\right) - \epsilon f.
    \end{align*}
    We have already shown that there is a constant $C$ independent of $s$ such that
    \begin{equation*}
        \sup_{\Omega} |f| + \sup_{\Omega} |Df| \leq C.
    \end{equation*}
    In particular, this implies that the equation  \eqref{JangQuaslinear} is strictly elliptic with an ellipticity constant $\lambda_C > 0$ independent of $s \in [0,1]$. Moreover, there is a constant $\mu_C > 0$ independent of $s$ such that
    \begin{equation*}
        \sum_{i,j} (|a^{ij}(x,v)| + |D_xa^{ij}(x,v)| + |D_va^{ij}(x,v)|) + |b(x,y,v)| < \mu_C,
    \end{equation*}
    for all $x \in \overline{\Omega}$ whenever $|y|,|v| < C$. It follows by \cite[Theorem $13.7$]{GilbargTrudinger} that there exists $\beta := \beta(C,\mu_{C}/\lambda_{C}, \Omega, |\varphi|_{C^{2,\alpha}(\Omega)}) = \beta_C$ such that
    \begin{equation*}
        |Df|_{C^{0,\beta}(\Omega)} < C' = C'(C,\mu_C/\lambda_C, \Omega, |\varphi|_{C^{2,\alpha}(\Omega)}).
    \end{equation*}
    We can now treat the equation \eqref{RegularJangS} as a linear elliptic equation, namely
    \begin{equation*}
        a^{ij}D_{ij}f + b^iD_if + cf = d,   
    \end{equation*}
    where
    \begin{align*}
        a^{ij} = \frac{u\overline g^{ij}}{\sqrt{1 + u^2|df|_g^2}}, \quad
        b^k = \frac{-u\overline g^{ij}\Gamma_{ij}^k + 2\overline g^{ik}u_i}{\sqrt{1 + u^2|df|_g^2}}, \quad
        c =  - \epsilon, \quad 
        d = s\overline g^{ij}K_{ij},
    \end{align*}
    and $\overline g^{ij}$ is given in \eqref{eq:MetricComponents}. By the above, $|a^{ij}|_{C^{0,\beta}(\overline \Omega)}, |b^{i}|_{C^{0,\beta}(\overline \Omega)}, |c|_{C^{0,\beta}(\overline \Omega)} \leq C$ for some constant $C$ independent of $s$. Using standard Schauder estimates \cite[Theorem $6.6$]{GilbargTrudinger} we conclude that $|f_s|_{C^{2,\beta}(\overline \Omega)} < C$, uniformly in $s \in [0,1]$. It follows that $|f_s|_{C^{1,\alpha}(\overline \Omega)} < C$ uniformly in $s \in [0,1]$. Applying \cite[Theorem $6.6$]{GilbargTrudinger} one more time, we conclude that $|f_s|_{C^{2,\alpha}(\overline \Omega)} < C$, uniformly in $s \in [0,1]$. 
\end{proof}

\subsection{Proof of \autoref{RegularizedBVP} by the method of continuity}

\begin{proof}[Proof of \autoref{RegularizedBVP}]
    Let $S$ be the set of those $s \in [0,1]$ for which the system \eqref{RegularJangS}-\eqref{RegularJangSBoundary} has a solution $f_s \in C^{3,\alpha}(\overline \Omega)$. $S$ is nonempty since $0 \in S$. We will now show that $S$ is both open and closed which will imply that $S = [0,1]$. 

    \emph{The set $S$ is closed.} Suppose that $\{s_k\}_{k = 1}^\infty \subset S$ is a sequence converging to $s_0 \in [0,1]$. Then by the results of Section \ref{sec_Apriori} the corresponding solutions $f_{s_k}$ to \eqref{RegularJangS}-\eqref{RegularJangSBoundary} are uniformly bounded in $C^{2,\alpha}(\overline \Omega)$. By the \cite[Lemma $6.36$]{GilbargTrudinger} there is a subsequence $\{s_{k_i}\}_{i = 1}^\infty$ such that $|f_{s_{k_i}} - f|_{C^{2,\beta}(\overline \Omega)} \to 0$ as $i \to \infty$ for some $f \in C^{2,\beta}(\overline \Omega)$ and $\beta \in (0,\alpha)$. $f$ is then a solution to \eqref{RegularJangS}-\eqref{RegularJangSBoundary} with $s = s_0$. Applying standard elliptic estimates we conclude that $f \in C^{3,\alpha}(\overline \Omega)$ and hence $s_0 \in S$, as claimed. 
    
    \emph{The set $S$ is open.} This is a consequence of the implicit function theorem. Consider the $C^1$ map $T: C^{3,\alpha}(\overline \Omega) \times \mathbb{R} \to C^{1,\alpha}(\overline \Omega) \times C^{3,\alpha}(\partial \Omega) \times \mathbb{R}$ given by
    \begin{equation}
        \label{DefT}
        T(f,s) := (T_1(f,s), T_2(f,s), T_3(f,s)) := (H_g(f) - s\tr_g(K)(f) - \epsilon f,f|_{\partial \Omega} - s\varphi,s). 
    \end{equation}
    The linearization of $T$ at $(f,s)$ is the map $DT_{(f,s)}: C^{3,\alpha}(\overline \Omega) \times \mathbb{R} \to C^{1,\alpha}(\overline \Omega) \times C^{3,\alpha}(\partial \Omega) \times \mathbb{R}$ given by
    \begin{align*}
        DT_{(f,s)}(\eta,\sigma) 
        & = (D(T_1)_{(f,s)}(\eta,\sigma),D(T_2)_{(f,s)}(\eta,\sigma),D(T_3)_{(f,s)}(\eta,\sigma)) \\
        & = (D(T_1)_{(f,s)}(\eta,\sigma),\eta|_{\partial \Omega} - \sigma \varphi,\sigma).
    \end{align*}
    More specifically
    \begin{equation}
        \label{eq:LinearizedOperator}
        D(T_1)_{(f,s)}(\eta,\sigma) = G_{(f,s)}^{ij}(x)\Hess_{ij} \eta + b_{(f,s)}^k(x,f,s) \eta_k + \sigma d_{(f,s)}(x,f,s) - \epsilon \eta,
\end{equation}
    where the coefficients $G^{ij}$, $b^k$ and $d$ are given by
    \begin{align*}
        G_{(f,s)}^{ij}(x) 
        & = \frac{u\overline g^{ij}}{(1 + u^2|df|_g^2)^{1/2}}, \\
        b_{(f,s)}^k(x) 
        & = \frac{-u^2(\overline g^{ij}f^k + \overline g^{jk}f^i + \overline g^{ki}f^j)(u\Hess_{ij}f + u_if_j + f_iu_j)}{(1+u^2|df|_g^2)^{3/2}} + 
        \frac{2\overline g^{ik}u_i}{(1+u^2|df|_g^2)^{1/2}} \\
        d_{(f,s)}(x) 
        & = -\overline g^{ij}K_{ij},
    \end{align*}
    and $\overline g^{ij}$ is given in \eqref{eq:MetricComponents}. For fixed $u \in C^{2,\alpha}(\overline \Omega)$, $f \in C^{3,\alpha}(\overline \Omega)$ and $s \in [0,1]$, the above coefficients are all in $C^{1,\alpha}(\overline \Omega)$ and the differential operator in the right hand side of \eqref{eq:LinearizedOperator} is uniformly elliptic. Recalling that $\epsilon > 0$ we can use standard elliptic theory to see that for every $\psi \in C^{1,\alpha}(\overline \Omega)$, $\xi \in C^{3,\alpha}(\partial \Omega)$ and $t \in \mathbb{R}$ there is a unique pair $(\eta, \sigma) \in C^{3,\alpha}(\overline \Omega) \times \mathbb{R}$ satisfying
    \begin{equation*}
        \begin{cases}
            G_{(f,s)}^{ij}(x)\Hess_{ij} \eta + b_{(f,s)}^k(x) \eta_k + \sigma d_{(f,s)}(x) - \epsilon \eta = \psi, & \text{ in $\Omega$} \\
            \eta = \sigma \varphi + \xi, & \text{ on $\partial \Omega$} \\
            \sigma = t. & 
        \end{cases}
    \end{equation*}
    Now suppose $s_0 \in S$ and $f_{s_0} \in C^{3,\alpha}(\overline \Omega)$ is a solution to \eqref{RegularJangS}-\eqref{RegularJangSBoundary} with $s = s_0$. By the implicit function theorem there is an interval $I = (s_0 - \delta,s_0 + \delta)$ such that for all $s \in I \cap [0,1]$, \eqref{RegularJangS}-\eqref{RegularJangSBoundary} has a solution $f_s \in C^{3,\alpha}(\overline \Omega)$. In other words, $S$ is open in $[0,1]$ as claimed. 
\end{proof}


\section{Geometric solution to the Jang equation}
\label{sec_GeometricSolution}
In this section we will take the $\epsilon$-limit of a sequence of solutions to the regularized problems \eqref{RegularJang}-\eqref{RegularJangBoundary} with $\phi=0$ in \Secref{sec_RegularBVP}, thereby constructing a so called geometric solution to the generalized Jang equation. To study this geometric solution, we will need a priori estimates of the vertical part of the normal. We begin by recalling the following proposition from Han and Khuri \cite[Section $3$]{HanKhuri}, generalizing an identity by Schoen and Yau \cite[Equation $2.18$]{SchoenYau2}.\\
\begin{proposition}
    \label{PDENormalProposition}
    Let $\Gamma(f) = \{(x,f(x)) : x \in U\} \subset M\times_u \mathbb{R}$ be a graphical submanifold over the open set $U \subset M$, with downward pointing unit normal $\nu$ and with mean curvature $H$ computed as the tangential divergence of $\nu$. Assuming that $g \in C^2_\text{loc}(U)$, $f \in C^3_\text{loc}(U)$ and that $\nu$ and $H$ are extended along the $\mathbb R$ factor so that
    \begin{align*}
        \nu_{(x,f(x))} = \nu_{(x,f(x) + t)}, \quad H_{(x,f(x))} = H_{(x,f(x) + t)}, \quad \text{for all $x \in U$ and $t \in \mathbb R$},
    \end{align*}
    we then have
    \begin{equation}
        \label{PDENormalT}
        \Delta^{\overline g}\langle \partial_t, -\nu \rangle + (|A|_{\overline g}^2 + \nu(H) + \widetilde \Ric(\nu,\nu))\langle \partial_t, -\nu \rangle = 0.
    \end{equation}
\end{proposition}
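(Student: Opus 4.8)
The plan is to use that $\partial_t$ is a Killing vector field for $\widetilde g = g + u^2\,dt^2$ — indeed $u = u(x)$ does not depend on $t$, so every component of $\widetilde g$ is $t$–independent — so that $\langle\partial_t, -\nu\rangle$ satisfies a Jacobi-type equation on $\Gamma(f)$, the only inhomogeneity being governed by the normal derivative of $H$. Write $\Sigma := \Gamma(f)$ and $\varphi := \langle\partial_t, -\nu\rangle$, and let $\phi_s(x,t) := (x, t+s)$ be the flow of $\partial_t$; it consists of isometries of $\widetilde g$ mapping $\Sigma$ onto the vertical translate $\Gamma(f+s)$. Since $(\phi_s)_*\partial_i = \partial_i$ and $(\phi_s)_*\partial_t = \partial_t$, the $t$–independent extensions of $\nu$ and $H$ prescribed in the statement restrict, on each $\Gamma(f+s)$, to its downward unit normal and to its mean curvature (the tangential divergence of that normal). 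Thus $s\mapsto \Gamma(f+s)$ is a variation of $\Sigma$ with velocity field $\partial_t$, whose normal component is $\langle\partial_t,\nu\rangle\nu = -\varphi\,\nu$ and whose tangential component is $(\partial_t)^\top = \partial_t + \varphi\,\nu$.

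Next I would invoke the first variation of mean curvature: for a variation of $\Sigma$ with velocity $\psi\,\nu + Y$ with $Y$ tangent to $\Sigma$, one has
\[
    \frac{\partial H}{\partial s} = -\Delta^{\overline g}\psi - \big(|A|_{\overline g}^2 + \widetilde\Ric(\nu,\nu)\big)\psi + Y(H),
\]
in the sign convention compatible with $H = \tr_{\overline g}A = \div_\Sigma\nu$ (all signs can be fixed once and for all by testing on a round sphere in Euclidean space), which is obtained by differentiating $H = \div_\Sigma\nu$ in a moving frame adapted to the variation and using the Gauss and Codazzi equations. The left-hand side vanishes here: each $\phi_s$ is an isometry carrying $\Sigma$ onto $\Gamma(f+s)$, so the mean curvature of $\Gamma(f+s)$ at $\phi_s(x,f(x))$ equals that of $\Sigma$ at $(x,f(x))$, which by the extension convention equals the extended $H$ at $\phi_s(x,f(x))$, and $\partial_t H = 0$. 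Taking $\psi = \langle\partial_t,\nu\rangle = -\varphi$ and $Y = (\partial_t)^\top$ gives $0 = \Delta^{\overline g}\varphi + (|A|_{\overline g}^2 + \widetilde\Ric(\nu,\nu))\varphi + (\partial_t)^\top(H)$. It remains to rewrite the tangential term: decomposing the ambient gradient of the extended $H$ as $\widetilde\nabla H = \overline\nabla H + (\nu H)\nu$ and using $\langle\partial_t,\widetilde\nabla H\rangle = \partial_t H = 0$ together with $\langle\partial_t,\nu\rangle = -\varphi$, one finds $(\partial_t)^\top(H) = \langle(\partial_t)^\top,\overline\nabla H\rangle = \varphi\,\nu(H)$, and substituting this into the previous identity yields exactly \eqref{PDENormalT}.

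The step that requires the most care is bookkeeping rather than conceptual: pinning down the signs in the first variation formula so that they match the conventions $A(X,Y) = \langle\nabla_X\nu,Y\rangle$ and $H = \div_\Sigma\nu$ in force here, and verifying carefully that the $t$–independent extensions of $\nu$ and $H$ coincide with their push-forwards under the Killing flow $\phi_s$, so that $\Gamma(f+s)$ genuinely realizes the asserted normal variation. As an alternative one can dispense with the variational language and verify \eqref{PDENormalT} by direct computation, using \eqref{eq:UnitNormalGraph}, \eqref{eq:SecFundFormGraph} and \eqref{eq:LaplacianDifferenceGraph}, the explicit value $\varphi = u/\sqrt{1+u^2|df|_g^2}$, and the warped-product formula for $\widetilde\Ric(\nu,\nu)$; this is essentially the route of \cite{HanKhuri} and \cite{SchoenYau2}, but it is considerably longer and obscures the Jacobi-field structure underlying \eqref{PDENormalT}.
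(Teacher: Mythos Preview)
Your argument is correct. Note, however, that the paper does not supply its own proof of this proposition: it is explicitly \emph{recalled} from Han and Khuri \cite[Section~3]{HanKhuri}, generalizing the identity of Schoen and Yau \cite[Equation~2.18]{SchoenYau2}, and no derivation is given in the present paper. So there is no ``paper's proof'' to compare against here.

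That said, your approach via the Killing structure of $\partial_t$ and the first variation of mean curvature is both valid and conceptually transparent. The chain of reasoning --- $\phi_s$ are isometries, hence the mean curvature along the variation is constant, hence the Jacobi-type operator applied to the normal component of $\partial_t$ equals minus the tangential derivative of $H$, which you correctly rewrite as $\varphi\,\nu(H)$ using $\partial_t H = 0$ --- is sound, and the sign check you suggest (round sphere in Euclidean space) confirms the form of the variation formula in the conventions $A(X,Y)=\langle\nabla_X\nu,Y\rangle$, $H=\div_\Sigma\nu$. Your remark that the alternative is a direct computation in coordinates, as in \cite{HanKhuri} and \cite{SchoenYau2}, is accurate; your route is shorter and makes the geometric origin of \eqref{PDENormalT} clear, at the cost of importing the first variation formula as a black box. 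The only caveat is exactly the one you flag: the sign bookkeeping must be done carefully, but you have done it correctly.
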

We will now apply \eqref{PDENormalT} when $f$ satisfies an equation of mean curvature type, deriving a differential inequality for the vertical part of the normal, $\langle u^{-1}\partial_t, -\nu\rangle$.\\
\begin{proposition}
    \label{DifferentialInequalityGraphs}
    Suppose that $\alpha \in (0,1)$, $\tau > 0$, that $(M,g,K)$ is a $C^{2,\alpha}_\tau$ asymptotically anti-de Sitter initial data set and that $\Gamma(f) = \{(x,f(x)) : x \in U\} \subset (M\times_u \mathbb{R},\widetilde g = g + u^2df^2)$ is a graphical submanifold over the open set $U \subset M$, with downward pointing unit normal $\nu$ and mean curvature $H$. 
    
    Suppose that $u - \rho^{-1} \in C^{2,0}_0(M)$, that $f \in C^3_\text{loc}(U)$ and that there is a $C^1$ function $F: (M \times_u \mathbb{R}) \times T(M\times_u \mathbb{R}) \to \mathbb{R}$ such that $H(x,f(x)) = F(x,f(x),\nu)$ for all $x \in U$. Let
    \begin{align*}
        D & := \{(p,X) \in (M \times_u \mathbb{R}) \times T(M\times_u \mathbb{R}) : |X|_{\widetilde g} \leq 1\},
    \end{align*}
    and let \( \widetilde g_\text{Sas} \) be the Sasaki metric on the tangent bundle \( T\widetilde M \) (see \cite[Section 3]{SasakiRef}), and suppose additionally that $|\nabla^{\widetilde g_\text{Sas}} F|_{\widetilde g_\text{Sas}} \leq C_0$ on \( D \), that the map $t\mapsto F(x,t,X)$ is non-decreasing for all $x \in M$ and $X \in T\widetilde M$. Then there is a constant $C > 0$ depending only on $(M,g,K)$, $|u-\rho^{-1}|_{C^{2,0}_0(M)}$, $\min_M u$ and $C_0$ such that
    \begin{equation}
        \label{eq:DiffIneqWithoutGrad}
        \Delta^{\overline g}(\langle u^{-1}\partial_t, -\nu\rangle^{1/2}) \leq C\langle u^{-1}\partial_t, -\nu\rangle^{1/2}. 
    \end{equation}
\end{proposition}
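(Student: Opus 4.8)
The plan is to start from the Han--Khuri identity \eqref{PDENormalT} and convert it into a differential inequality for $\phi := \langle u^{-1}\partial_t, -\nu\rangle$, the vertical part of the normal. First I would record that $\langle \partial_t, -\nu\rangle = u\phi$ and that, by \eqref{eq:UnitNormalGraph}, $\phi = (1 + u^2|df|_g^2)^{-1/2} \in (0,1]$; in particular $\phi$ never vanishes on $U$, so $\phi^{1/2}$ is smooth and the chain-rule computation
\begin{equation*}
    \Delta^{\overline g}(\phi^{1/2}) = \frac{1}{2}\phi^{-1/2}\Delta^{\overline g}\phi - \frac{1}{4}\phi^{-3/2}|\overline\nabla \phi|_{\overline g}^2
\end{equation*}
is legitimate. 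Since the last term is $\leq 0$, it suffices to bound $\Delta^{\overline g}\phi$ from above by $C\phi$ (modulo terms controlled by $|\overline\nabla\phi|^2_{\overline g}\phi^{-1}$ with the right sign). So the real work is to extract $\Delta^{\overline g}\phi$ from \eqref{PDENormalT} after writing $\langle\partial_t,-\nu\rangle = u\phi$ and using the product rule for $\Delta^{\overline g}(u\phi)$, together with \eqref{eq:LaplacianDifferenceGraph} to handle $\Delta^{\overline g} u$.

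Second, I would expand \eqref{PDENormalT}. Writing $\Delta^{\overline g}(u\phi) = u\Delta^{\overline g}\phi + 2\langle\overline\nabla u,\overline\nabla\phi\rangle_{\overline g} + \phi\Delta^{\overline g}u$ and substituting $H(p) = F(p,\nu)$ so that $\nu(H) = dF(\nu)$ along $\Gamma(f)$ (using the extension hypothesis that $\nu, H$ are constant along the $t$-lines, and that $F\in C^1(D)$ with $|F|_{C^1(D)}\le C_0$), equation \eqref{PDENormalT} becomes
\begin{equation*}
    u\Delta^{\overline g}\phi = -2\langle\overline\nabla u,\overline\nabla\phi\rangle_{\overline g} - \phi\Delta^{\overline g}u - \big(|A|_{\overline g}^2 + dF(\nu) + \widetilde\Ric(\nu,\nu)\big)u\phi.
\end{equation*}
Now the monotonicity hypothesis enters exactly as in Schoen--Yau and Han--Khuri: the $t$-derivative of $F$ is non-negative, and since $\partial_t$ decomposes into its normal component (a multiple of $\nu$ with coefficient $\propto \phi$) plus a tangential part, $dF(\nu)$ contributes a term of the form $(\text{non-negative})\cdot(\text{sign-definite}) + O(C_0)$; the delicate point is that the ``bad'' sign appears multiplied by $\phi^2$ or $|df|_g$, which one reabsorbs. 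The term $|A|_{\overline g}^2 u\phi$ is $\geq 0$ and drops out of an upper bound. The term $\widetilde\Ric(\nu,\nu)$ is estimated using \eqref{eq:WarpedChristoffels}: the warped-product curvature of $M\times_u\mathbb{R}$ involves $\Ric^g$, $\Hess u/u$, and $|\nabla u|_g^2/u^2$, all of which are $O(1)$ by \autoref{lem:WarpingBounds} (that is precisely why that lemma was stated). The term $\phi\Delta^{\overline g}u$ is rewritten via \eqref{eq:LaplacianDifferenceGraph}; each of the three pieces there is $O(u)\cdot\phi$ after using $|\nabla u|_g + |\nabla^2 u|_g \leq Cu$, $|df|_g^2/(1+u^2|df|_g^2)\le u^{-2}$, $|H_{\Gamma(f)}| = |F| \leq C_0$, and $|\langle\nabla f,\nabla v\rangle_g| \leq$ (gradient factor absorbable into the cross term). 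Dividing by $u \geq \min_M u > 0$ then gives $\Delta^{\overline g}\phi \leq C\phi + (\text{cross term in }\overline\nabla u, \overline\nabla\phi)$.

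Third, I would combine this with the chain-rule identity for $\phi^{1/2}$: the cross term $-2u^{-1}\langle\overline\nabla u,\overline\nabla\phi\rangle_{\overline g}$ is handled by Cauchy--Schwarz against the manifestly negative term $-\tfrac14\phi^{-3/2}|\overline\nabla\phi|_{\overline g}^2$, namely $|2u^{-1}\langle\overline\nabla u,\overline\nabla\phi\rangle_{\overline g}|\phi^{-1/2} \leq \tfrac14\phi^{-3/2}|\overline\nabla\phi|_{\overline g}^2 + Cu^{-2}|\overline\nabla u|_{\overline g}^2\phi^{1/2}$, and $|\overline\nabla u|^2_{\overline g} \leq |\nabla u|^2_g \leq Cu^2$ again by \autoref{lem:WarpingBounds} (here one uses $\overline g^{ij}\le g^{ij}$ in the sense of quadratic forms, immediate from \eqref{eq:MetricComponents}). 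This absorbs the gradient term and yields \eqref{eq:DiffIneqWithoutGrad} with $C$ depending only on the stated quantities.

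I expect the main obstacle to be the careful bookkeeping of the term $dF(\nu)$ together with $\widetilde\Ric(\nu,\nu)$ and $\phi\,\Delta^{\overline g}u$: one has to verify that every ``wrong-sign'' contribution either carries an extra factor that makes it $O(\phi)$ rather than $O(\phi^{-1})$, or is absorbable into the good negative gradient term, and that all constants genuinely depend only on $(M,g,K)$, $|u-\rho^{-1}|_{C^{2,0}_0(M)}$, $\min_M u$ and $C_0$ — uniformity in $f$ being the whole point. The geometric estimates on $\widetilde\Ric$ and $\Delta^{\overline g}u$ are routine given \autoref{lem:WarpingBounds} and \eqref{eq:LaplacianDifferenceGraph}, so the essential content is really this sign/weight accounting, exactly as in \cite[Section 3]{HanKhuri} but now with the $\rho^{-1}$-growth of $u$ controlled uniformly by \autoref{lem:WarpingBounds}.
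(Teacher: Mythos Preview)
Your overall architecture is right (start from \eqref{PDENormalT}, expand $\Delta^{\overline g}(u\phi)$, control $\widetilde\Ric(\nu,\nu)$ and $\Delta^{\overline g}u$ via \autoref{lem:WarpingBounds} and \eqref{eq:LaplacianDifferenceGraph}, drop $|A|^2$ by sign, then pass to $\phi^{1/2}$ and absorb the cross term by Cauchy--Schwarz), and these steps are exactly what the paper does. The gap is in your treatment of $\nu(H)$.

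You write ``$\nu(H)=dF(\nu)$ \dots contributes a term of the form (non-negative)$\cdot$(sign-definite) $+\,O(C_0)$''. This is not correct. Since $H(x)=F(x,f(x),\nu(x))$ with $\nu$ depending on the base point, the chain rule gives
\[
\nu(H)=\nu^i F_{x^i}+\nu^i f_i\,F_t+F_{p_j}\,\nu^i\partial_i\nu^j,
\]
and the last term involves derivatives of $\nu$, i.e.\ second derivatives of $f$; it is \emph{not} bounded by $|F|_{C^1(D)}$. Controlling this term is the entire content of the proof and is precisely what distinguishes the argument from Han--Khuri (who rely on an $|A|$ bound available only in dimension $3$). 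The paper, adapting Eichmair--Metzger \cite[Lemma~A.1]{EichmairMetzger} to the warped setting, computes explicitly using $\nu^i=u\phi\,f^i$ and the identity $(u\phi)_i=\phi^3 u_i-u^3\phi^3 f^k\nabla_i f_k$ that
\[
u\phi\,\nu^i\nabla_i\nu^j=-\,\overline g^{\,jk}(u\phi)_k+\phi^3 g^{jk}u_k,
\]
so that $F_{p_j}\nu^i\nabla_i\nu^j$ is, up to bounded terms, a first-order expression in $\overline\nabla(u\phi)$. This feeds an additional $F_{p_j}$ contribution into the cross term: one ends up with $\phi^{-1}\Delta^{\overline g}\phi\le C+\langle X,\phi^{-1}\overline\nabla\phi\rangle_{\overline g}$ where $X_j=F_{p_j}-2u^{-1}u_j$ (not just $-2u^{-1}u_j$ as you have), and only then does the Cauchy--Schwarz absorption into $-\tfrac14\phi^{-2}|\overline\nabla\phi|_{\overline g}^2$ close the argument. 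Your proposal needs this identity; the rest of your plan is fine.
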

\begin{remark}
    \label{PowerRemovesBoundaryTerm}
    Note that \eqref{eq:DiffIneqWithoutGrad} differs from the inequality in \cite[Lemma A.1]{EichmairMetzger} of Eichmair and Metzger. Working with the quantity $\langle u^{-1}\partial_t, -\nu\rangle^{1/2}$ instead of $\langle u^{-1}\partial_t, -\nu\rangle$ allows us to eliminate a gradient term, see \cite[Equation $17$]{EichmairMetzger} and \cite[Remark A.2]{EichmairMetzger}.  
\end{remark} \hfill 
\begin{remark}
    \autoref{DifferentialInequalityGraphs} also generalizes a result of Han and Khuri \cite[Theorem $3.2$]{HanKhuri}, which holds in dimension $3$. The proof of \cite[Theorem $3.2$]{HanKhuri} uses the boundedness of $|A|_{\overline g}$ and as such does not extend to higher dimensions. 
\end{remark}
\begin{proof}
    We adapt the argument of Eichmair and Metzger \cite[Lemma A.1]{EichmairMetzger} to the warped product setting. We let \( x \in U \) be fixed but arbitrary and let $x^i$, $i=1,\ldots,n$, be geodesic normal local coordinates centered at $ x \in M$ and we denote by $t$ the coordinate on $\mathbb{R}$. Further, let $p_1, \ldots p_n, p_t$ be the canonical coordinates on $T(M\times_u \mathbb{R})$ induced by $(x^1,\ldots, x^n,t)$. In the rest of this proof, all quantities defined on \( U \subset M \) are evaluated at \( x \) and all quantities defined on \( \Gamma(f) \) and \( U \times \mathbb R \) are evaluated at \( (x,f(x)) \). Our assumptions on the function \( F \) imply that
    \begin{equation}
        \label{eq:ConditionsF}
        |F_{x_i}| + |F_{p_i}| + \frac{|F_{t}|}{u} + \frac{|F_{p_t}|}{u} \leq C_0.
    \end{equation}
    The factor \( u^{-1} \) in the last two terms appears since \( |\partial_t|_{\widetilde g} = u \). Defining $w := \langle u^{-1}\partial_t,-\nu\rangle$, \eqref{eq:UnitNormalGraph} implies
    \begin{equation}
        \label{NormalComponents}
        \nu^i = uwf^i, \quad \nu^t = -\frac{w}{u}.
    \end{equation}
    Consequently, we have
    \begin{align}
        \label{ExpandMC1}
        \begin{split}
            \nu(H)
            & = \nu^i\partial_i(F(x,f(x),\nu(x)))
            = \nu^i\left(F_{x^i} + F_t f_i + F_{p_j}\frac{\partial \nu^j}{\partial x^i} + F_{p_t}\frac{\partial \nu^t}{\partial x^i}\right) \\
            & = \nu^iF_{p_t}\widetilde \nabla_i \nu^t + \nu^iF_{p_j}\widetilde \nabla_i\nu^j + F_t |df|_g^2\langle \partial_t, -\nu \rangle\\
            & + \nu^i(F_{x^i} - F_{p_j}\widetilde\Gamma_{im}^j \nu^m - 
            F_{p_j}\widetilde\Gamma_{it}^j \nu^t - 
            F_{p_t}\widetilde\Gamma_{im}^t \nu^m -  
            F_{p_t}\widetilde\Gamma_{it}^t \nu^t).
        \end{split}
    \end{align}
    The last two terms on the right-most side of \eqref{ExpandMC1} can be bounded from below as
    \begin{align*}
        F_t |df|_g^2\langle \partial_t, -\nu \rangle 
        & \geq 0,
    \end{align*}
    and recalling \eqref{eq:WarpedChristoffels}, \eqref{eq:UnitNormalGraph} and that \( \Gamma_{ij}^k = 0\) we have
    \begin{align*}
        \nu^i(F_{x^i} - F_{p_j}\widetilde\Gamma_{im}^j \nu^m - F_{p_j}\widetilde\Gamma_{it}^j \nu^t - F_{p_t}\widetilde\Gamma_{it}^t \nu^t -  F_{p_t}\widetilde\Gamma_{it}^j \nu^t)
        & = \nu^i\biggl(F_{x^i} -   
        F_{p_t}\frac{u_i}{u}\nu^t\biggr) \\
        & = \nu^iF_{x^i} +   
        w\frac{F_{p_t}}{u}\frac{\nu^iu_i}{u} \\
        & \geq -C,
    \end{align*}
    where the constant $C$ above depends only on the geometry of $(M,g,K)$, \( u \) and the constant $C_0$. As a consequence of these inequalities, \eqref{ExpandMC1} yields
    \begin{equation}
        \label{ExpandMC2}
        \nu(H) \geq \nu^iF_{p_t}\widetilde \nabla_i \nu^t + \nu^iF_{p_j}\widetilde \nabla_i\nu^j - C.
    \end{equation}
    A computation using the identity $w = (1 + u^2|df|_g^2)^{-1/2}$ shows that
    \begin{align}
        \label{GradientUW}
        \begin{split}
            (uw)_i
            & = \frac{u_i}{(1 + u^2|df|_g^2)^{1/2}} - \frac{u^2u_i|df|_g^2 + u^3f^k\nabla_if_k}{(1 + u^2|df|_g^2)^{3/2}} \\
            & = \frac{u_i}{(1 + u^2|df|_g^2)^{1/2}}\left(1 -\frac{u^2|df|_g^2}{1 + u^2|df|_g^2}\right) - \frac{u^3f^k\nabla_if_k}{(1 + u^2|df|_g^2)^{3/2}} \\
            & = w^3u_i - u^3w^3f^k\nabla_if_k.
        \end{split}
    \end{align}
    Further, \eqref{NormalComponents} and \eqref{GradientUW} imply
    \begin{align*}
        & \quad uw\nu^i\widetilde \nabla_i\nu^j \\
        & = u^2w^2f^i\left(uw\nabla_i f^j -(uw)^3f^jf^k\nabla_if_k + w^3f^ju_i\right) && \text{ by \eqref{NormalComponents} and \eqref{GradientUW}} \\
        & = u^3w^3f^i\left(g^{jk} - u^2w^2f^jf^k\right)\nabla_i f_k + u^2w^5f^if^ju_i \\
        & = u^3w^3f^i\overline g^{jk} + u^2w^5f^if^ju_i && \text{ by $\overline g^{ij} = g^{ij} - u^2w^2f^if^j$} \\
        & = \overline g^{jk}(-(uw)_k + w^3u_k) + u^2w^5f^if^ju_i && \text{ by \eqref{GradientUW}} \\
        & = -\overline g^{ij}(uw)_i + \overline g^{ij}w^3u_i + u^2w^2f^if^jw^3u_i \\
        & = -\overline g^{ij}(uw)_i + g^{jk}w^3u_k && \text{ by $\overline g^{ij} = g^{ij} - u^2w^2f^if^j$.} 
    \end{align*}
    After multiplying the equality resulting from the above calculation by $F_{p_j}$ we have
    \begin{equation}
        \label{DangerousTerm}
        uw\nu^iF_{p_j}\widetilde \nabla_i\nu^j = -\overline g^{ij}(uw)_iF_{p_j} + g^{jk}w^3F_{p_j}u_k.
    \end{equation}
    Due to \autoref{lem:WarpingBounds} we have $|du|_g, |\Hess u|_g \leq Cu$ for some constant $C$ depending on $(M,g,K)$ and $u$. By definition we have $0\leq w \leq 1$. These bounds imply $\left|w^3g^{jk}F_{p_j}u_k\right| \leq Cuw$ and combining this with \eqref{ExpandMC2} and \eqref{DangerousTerm} it follows that
    \begin{equation}
        \label{InequalityNuH}
        -uw\nu(H) \leq uwF_{p_t}\nu^i\widetilde \nabla_i (-\nu^t) + \overline g^{ij}(uw)_iF_{p_j} + Cuw.
    \end{equation}
    Due to \eqref{NormalComponents} we have
    \begin{equation*}
        \nu^i\widetilde \nabla_i (-\nu^t)
        = \frac{\nu^iw_i}{u} - \frac{w\nu^i u_i}{u^2}.
    \end{equation*}
    This equality combined with \eqref{InequalityNuH} yields
    \begin{equation}
        \label{InequalityNuH*}
        -uw\nu(H) \leq w F_{p_t} \nu^iw_i - w^2F_{p_t}\frac{\nu^i u_i}{u} + \overline g^{ij}(uw)_iF_{p_j} + Cuw.
    \end{equation}
    Consequently, \eqref{PDENormalT} implies
    \begin{equation*}
        \Delta^{\overline g}(uw) = (\underbrace{-\widetilde{\Ric}(\nu,\nu)}_{\leq C} \underbrace{-|A|_{\overline g}^2}_{\leq 0} - \nu(H))uw \leq Cuw - uw\nu(H),
    \end{equation*}
    which combined with \eqref{InequalityNuH*} yields, after rearrangement, 
    \begin{equation}
        \label{DifferentialInequalityUW}
        \Delta^{\overline g}w  \leq \left(C - w\frac{F_{p_t}}{u}\frac{\nu^i u_i}{u} + \overline g^{ij}F_{p_j}\frac{u_i}{u} - \frac{\Delta^{\overline g} u}{u}\right)w + \overline g^{ji}\left(F_{p_j} - \frac{2u_j}{u}\right)w_i + w \frac{F_{p_t}}{u} \nu^iw_i.
    \end{equation}
    Recalling that $|du|_g, |\Hess u|_g \leq Cu$, equation \eqref{eq:LaplacianDifferenceGraph} implies
    \begin{equation*}
        \left|\frac{\Delta^{\overline g} u}{u}\right|
        = \left|\frac{\Delta u}{u} -\frac{u(\Hess u)(\nabla f, \nabla f)}{1 + u^2|df|_g^2} + \frac{|du|_g^2|df|_g^2}{1 + u^2|df|_g^2} -\frac{\langle df,du\rangle H_{\Gamma(f)}}{\sqrt{1 + u^2|df|_g^2}}\right| \leq C,
    \end{equation*}
    hence
    \begin{equation}
        \label{eq:FactorDegree0}
        \left|C - w\frac{F_{p_t}}{u}\frac{\nu^i u_i}{u} + \overline g^{ij}F_{p_j}\frac{u_i}{u} - \frac{\Delta^{\overline g} u}{u}\right| \leq C.
    \end{equation}
    Furthermore, a calculation using the identity \( g^{ij} = \overline g^{ij} + \nu^i\nu^j \) (see \eqref{eq:MetricComponents}) shows that
    \begin{align*}
        \nu^i w_i 
        = g^{ij}(g_{jk}\nu^k)w_i 
        & = \overline g^{ij}(g_{jk}\nu^k)w_i + \nu^i \nu^j (g_{jk}\nu^k) w_i \\
        & = \overline g^{ij}(g_{jk}\nu^k)w_i + (1 - w^2)\nu^i w_i,
    \end{align*}
    which can be rearranged into
    \begin{equation}
        \label{eq:FactorDegree1}
        \nu^i w_j = w^{-2}\overline g^{ij}(g_{jk}\nu^k)w_i.
    \end{equation}
    We may now conclude by \eqref{eq:FactorDegree0}, \eqref{eq:FactorDegree1} and \eqref{DifferentialInequalityUW} that
    \begin{equation}
        \label{eq:HarnackAlmost}
        w^{-1}\Delta^{\overline g}w  \leq C 
        + \overline g^{ij}\left(F_{p_j} - \frac{2u_j}{u} + \frac{F_{p_t}}{uw}(g_{jk}\nu^k)\right)\frac{w_i}{w}.
    \end{equation}
    Defining $X_i := F_{p_i} - \frac{2u_i}{u} + \frac{F_{p_t}}{uw}(g_{ik}\nu^k)$, then using our assumptions on the function \( F \), we have by the Cauchy-Schwarz inequality
    \begin{equation}
        \label{eq:BarCS}
        \overline g^{ij}X_j\frac{w_i}{w}
        \leq \bigl(\overline g^{ij}X_iX_j\bigr)^{1/2}\biggl(\overline g^{kl}\frac{w_l}{w}\frac{w_k}{w}\biggr)^{1/2} 
        = \bigl(\overline g^{ij}X_iX_j\bigr)^{1/2}|w^{-1}dw|_{\overline g}.
    \end{equation}
    Due to the equality
    \begin{align*}
        \overline g^{ij}(g_{ik}\nu^k)(g_{jl}\nu^l)
        & = (g^{ij} - \nu^i\nu^j)(g_{ik}\nu^k)(g_{jl}\nu^l) 
        = g^{ij}g_{ik}g_{jl}\nu^l\nu^k - (\nu^ig_{ik}\nu^k) (\nu^kg_{jl}\nu^l) \\
        & = (1 - w^2) - (1-w^2)^2 \\
        & = w^2(1-w^2),
    \end{align*}
    we conclude by the triangle inequality that 
    \begin{equation}
        \label{eq:BarTriangle}
        (\overline g^{ij}X_iX_j)^{1/2} 
        \leq (\overline g^{ij}F_{p_i}F_{p_j})^{1/2} 
        + 2\biggl(\overline g^{ij}\frac{u_i}{u}\frac{u_j}{u}\biggr)^{1/2} 
        + \frac{|F_{p_t}|}{uw}(\overline g^{ij}(g_{ik}\nu^k)(g_{jl}\nu^l))^{1/2}
        \leq C.
    \end{equation}
    All in all, the inequalities \eqref{eq:BarCS} and \eqref{eq:BarTriangle} can be used in \eqref{eq:HarnackAlmost} to conclude that
    \begin{equation*}
        w^{-1}\Delta^{\overline g}w  \leq C 
        + C\frac{|dw|_{\overline g}}{w}.
    \end{equation*}
    An application of the inequality $2ab\leq a^2 + b^2$ now yields
    \begin{equation*}
        w^{-1}\Delta^{\overline g}w \leq C + \frac{|dw|_{\overline g}^2}{2w^2}.
    \end{equation*}
    This in turn implies
    \begin{equation*}
        w^{-1/2}\Delta^{\overline g} w^{1/2}
        = \frac{\Delta^{\overline g} w}{2w} - \frac{|dw|_{\overline g}^2}{4w^2} \leq C,
    \end{equation*}
    and \eqref{eq:DiffIneqWithoutGrad} follows.
\end{proof}

\begin{proposition}
    \label{GradientFromOscillation}
    Suppose that $\alpha \in (0,1)$, $\tau > 0$ are real numbers and that $(M,g,K)$ is a $C^{2,\alpha}_\tau$ asymptotically anti-de Sitter initial data set. Suppose moreover that $f \in C^3_\text{loc}(B_R(x_0))$ satisfies $\mathcal J(f) = \epsilon f$ for some $\epsilon \in [0,1)$ on $B_R(x_0) \subset M$, that $u - \rho^{-1} \in C^{2,0}_0(M)$ and that $\sup_{B_R(x_0)}|uf| \leq T$ for some $T > 0$. Then there is a constant $C > 0$ depending only on $(M,g,K)$, $|u - \rho^{-1}|_{C^{2,0}_0(M)}$, $\inf_M u$, $R$ and $T$ such that
    \begin{align*}
        |(udf)(x_0)|_g \leq C. 
    \end{align*}
\end{proposition}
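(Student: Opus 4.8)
The plan is to derive an interior gradient estimate for solutions of $\mathcal{J}(f) = \epsilon f$ by relating $|u\,df|_g$ to the vertical part of the normal $w = \langle u^{-1}\partial_t, -\nu\rangle = (1 + u^2|df|_g^2)^{-1/2}$, and then applying a Harnack-type inequality to the differential inequality from \autoref{DifferentialInequalityGraphs}. The first step is to observe that the generalized Jang equation is of the mean curvature type required by \autoref{DifferentialInequalityGraphs}: if $f$ solves $\mathcal{J}(f) = \epsilon f$, then $\Gamma(f)$ has mean curvature $H = \tr_{\Gamma(f)}\overline K + \epsilon f$, and using the extension \eqref{eq:ExtensionK} together with \eqref{eq:PresecribedMeanCurvature} one can write $H(p) = F(p,\nu)$ for a $C^1$ function $F$ on the unit ball bundle $D$. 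The monotonicity of $t \mapsto F(x,t,X)$ follows since $\epsilon f$ is non-decreasing in $t$ when we extend $f$ vertically in the right way; actually, since here $f$ is a fixed function of $x$ alone and we extend $H$ and $\nu$ to be $t$-independent, the relevant $F$ is independent of $t$ (so trivially non-decreasing), and the bound $|F|_{C^1(D)} \leq C_0$ follows from the decay of $K$, $\tr_{\partial\Sigma}\overline K$, the bounds on $u$ from \autoref{lem:WarpingBounds}, and the a priori bound $|uf| \leq T$ (this is where $T$ enters). Care is needed to confirm $C_0$ depends only on the claimed quantities.

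Next, having the inequality $\Delta^{\overline g}(w^{1/2}) \leq C w^{1/2}$ on $\Gamma(f)$ over $B_R(x_0)$, I would invoke the mean value / local maximum principle inequality for subsolutions (e.g. \cite[Theorem 8.17]{GilbargTrudinger} applied on the graph, or a Moser iteration argument) to bound $\sup_{B_{R/2}} w^{-1/2}$ — equivalently $\inf w$ from below — in terms of an integral norm of $w^{-1/2}$, and ultimately in terms of the geometry. The subtlety is that the metric $\overline g$ on the graph is not uniformly controlled a priori (that is exactly what we are trying to estimate), so one cannot directly apply elliptic estimates on $(B_R(x_0), \overline g)$. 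Instead, following the strategy in Eichmair--Metzger \cite{EichmairMetzger} and Han--Khuri \cite{HanKhuri}, I would work with the \emph{area} of the graph: since $w^{-1} = \sqrt{1 + u^2|df|_g^2}$ is the Jacobian factor relating $d\overline\mu$ to $d\mu_g$, integrating $w^{-1/2}$ (or a suitable power) over the graph against a cutoff can be controlled once one has an area bound for $\Gamma(f)$ over $B_R(x_0)$. Such an area bound follows from the fact that $\Gamma(f)$ has bounded mean curvature (bounded by $C_0$) together with a monotonicity-formula or first-variation argument comparing to a ball, after possibly passing through the "almost conformally Euclidean" structure of $(M,g)$ near infinity and compactness in the interior.

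Concretely, the key steps in order are: (i) verify the hypotheses of \autoref{DifferentialInequalityGraphs} for $f$ solving $\mathcal{J}(f) = \epsilon f$, extracting $C_0 = C_0((M,g,K), |u-\rho^{-1}|_{C^{2,0}_0}, \inf u, T)$; (ii) obtain a uniform area bound $\overline\mu(\Gamma(f) \cap (B_{R}(x_0)\times\mathbb{R})) \leq C$ using the bounded mean curvature of the graph and a comparison argument; (iii) combine the differential inequality $\Delta^{\overline g}(w^{1/2}) \leq C w^{1/2}$ with the area bound and a cutoff-function / De Giorgi--Nash--Moser argument on the graph to conclude $\inf_{\Gamma(f)\cap(B_{R/2}(x_0)\times\mathbb{R})} w \geq c > 0$; (iv) translate $w(x_0) \geq c$ back into $|u\,df(x_0)|_g = \sqrt{w(x_0)^{-2} - 1} \leq C$. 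The main obstacle is step (ii)–(iii): getting a genuinely a priori bound on either the area of the graph or a lower bound on $w$ without circularity, since both are controlled by the very gradient we want to estimate. The resolution is that the mean curvature of $\Gamma(f)$ is bounded \emph{a priori} (by $C_0$, independent of $|df|$), and the ambient geometry of $M\times_u\mathbb{R}$ is locally controlled, so standard geometric measure theory estimates (monotonicity formula for almost-minimal surfaces, or the argument in \cite[Lemma A.1 and surrounding]{EichmairMetzger}) give the area bound; alternatively one can follow Han--Khuri \cite[Section 3]{HanKhuri} more directly in spirit but replacing their use of bounded $|A|_{\overline g}$ (valid only in dimension $3$) with the $w^{1/2}$-trick of \autoref{PowerRemovesBoundaryTerm} plus Moser iteration, which is precisely why the proposition is stated with the $1/2$-power differential inequality.
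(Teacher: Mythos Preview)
Your proposal takes a genuinely different route from the paper. The paper does not use Moser iteration or any Harnack-type argument on the graph; instead it follows the Spruck/Korevaar interior gradient technique (as in Eichmair \cite[Lemma~2.1]{EichmairPlateau}). Concretely, after recording $\Delta^{\overline g} w^{1/2} \leq C w^{1/2}$ and the mean curvature bound $|H|\leq C$, the paper introduces the test function $w^{-1/2}\eta$ with $\eta = e^{L\phi}-1$ and $\phi(x) = -u(x_0)f(x_0)+u(x)f(x)+R - \tfrac{T+R}{R^2}d(x_0,x)^2$, and applies the maximum principle at an interior maximum of $w^{-1/2}\eta$. The differential inequality for $w^{1/2}$ is used pointwise to control $w^{1/2}\overline\Delta(w^{-1/2})$, and every term in $\overline\Delta\phi$ (namely $u\overline\Delta f$, $f\overline\Delta u$, $\langle\overline\nabla f,\overline\nabla u\rangle_{\overline g}$, and the distance-squared term) is estimated directly using $|H|\leq C$, \autoref{lem:WarpingBounds}, and the oscillation bound $|uf|\leq T$. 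This yields $|\overline\nabla\phi|_{\overline g}\leq C/L$ at the maximum, which after unpacking forces $u|df|_g\leq C$ there and hence at $x_0$.

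Your approach, by contrast, relies on integral estimates on $(\Gamma(f),\overline g)$, and you correctly flag the circularity: the elliptic constants for Moser iteration on the graph depend on the very quantity you are estimating. Your proposed resolution via an area bound plus Michael--Simon is plausible in spirit, but it is not fully worked out. The area bound in step (ii) can indeed be obtained from the $C$-almost minimizing property (bounded $H$ and $|uf|\leq T$ confine the graph to a compact region of $\widetilde M$), but step (iii) remains delicate: a weak Harnack inequality for the supersolution $w^{1/2}$ gives $\inf w^{1/2}\gtrsim (\fint w^{p/2})^{1/p}$ on \emph{intrinsic} balls of $\Gamma$, and you must still relate intrinsic balls to the extrinsic region $B_{R/2}(x_0)\times\mathbb R$ and bound the averaged integral from below, both of which implicitly involve $w$. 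These steps can likely be carried out (this is closer to how \cite{EichmairMetzger} argue), but the paper's pointwise test-function argument sidesteps the issue entirely: the localization is built into $\phi$, and no control of the graph geometry beyond the mean curvature bound and the warped-product estimates is needed.
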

\begin{proof}
    The argument we present is inspired by Eichmair \cite[Lemma 2.1]{EichmairPlateau}, who in turn adapts an argument of Spruck \cite{Spruck}. Throughout this proof, the constant $C > 0$ may change from line to line but depends only on $(M,g,K)$, $|u|_{C^2(B_R(x_0))}$, $\inf_{B_R(x_0)}u$, $R$ and $T$. \autoref{DifferentialInequalityGraphs} implies that $w := \langle u^{-1}\partial_t,-\nu \rangle = (1 + u^2 |df|_g^2)^{-1/2}$ satisfies
    \begin{equation}
        \label{HarnackIneq}
        \Delta^{\overline g} w^{1/2} \leq Cw^{1/2}.
    \end{equation}
    We also note that the assumption $\mathcal J(f) = \epsilon f$ implies
    \begin{equation*}
        H = \tr_{\Gamma(f)} \overline K + \epsilon f = \tr_{\Gamma(f)} K + \frac{\langle \nabla u, \nu\rangle_{\widetilde g}}{u} +  \epsilon f, 
    \end{equation*}
    which combined with \autoref{lem:WarpingBounds} and the assumption $\sup_{B_R(x_0)}|uf| \leq T$ implies
    \begin{equation}
        \label{eq:BoundedMeanCurvature}
        |H| \leq C,
    \end{equation}
    where the constant $C$ depends on $(M,g,K)$, $|u - \rho^{-1}|_{C^{2,0}_0(M)}$, $\inf_M u$, $T$ and $\epsilon$. We let $L > 0$ be a constant to be specified and we define the functions
    \begin{align*}
        \eta := e^{L\phi} - 1,\quad
        \phi(x) := -u(x_0)f(x_0) + u(x)f(x) + R - \frac{T + R}{R^2}d(x_0,x)^2.
    \end{align*}
    The set $\Omega := \{x \in B_R(x_0) : \phi(x) > 0\}$ is open and contains $x_0$. We have $\phi \equiv 0$ on $\partial \Omega$ and hence also $\eta = 0$ on $\partial \Omega$. At an interior maximum point $p \in \Omega$ of $w^{-1/2}\eta$ we have
    \begin{align}
        \label{eq-weta_grad}
        0 = d(w^{-1/2}\eta) & = \eta \overline\nabla (w^{-1/2}) + w^{-1/2}\overline\nabla \eta \\
        \label{eq-weta_Laplace}
        0 \geq w^{1/2}\overline\Delta (w^{-1/2}\eta) & = \overline\Delta \eta + 2w^{1/2}\langle \overline\nabla \eta, \overline\nabla w^{-1/2}\rangle_{\overline g} + \eta w^{1/2} \overline\Delta (w^{-1/2}).
    \end{align}
    The equation \eqref{eq-weta_grad} implies
    \begin{equation*}
        2w^{1/2}\langle \overline\nabla \eta, \overline\nabla(w^{-1/2})\rangle
        = -2w\eta |\overline\nabla(w^{-1/2})|_{\overline g}^2,
    \end{equation*}
    and by \eqref{HarnackIneq} we also have
    \begin{equation*}
        w^{1/2} \overline\Delta (w^{-1/2})
        = -w^{-1/2}\overline \Delta w^{1/2} + 2w|\overline \nabla w^{-1/2}|_{\overline g}^2 \geq -C + 2w|\overline \nabla w^{-1/2}|_{\overline g}^2.
    \end{equation*}
    In conclusion, \eqref{eq-weta_Laplace} implies that at an interior maximum point $p$ of $w^{-1/2}\eta$ we have
    \begin{equation}
        \label{SimplifiedMaxima}
        C\eta \geq \overline\Delta \eta.
    \end{equation}
    Next, we note that by the definitions of $\eta$ and $\phi$, we have the identities
    \begin{align}
        \label{LaplaceEta}
        \overline\Delta \eta & = Le^{L\phi}\overline \Delta \phi + L^2 e^{L\phi}|\overline\nabla \phi|_{\overline g}^2,\\
        \label{DerivativePhi}
        \overline \nabla \phi & = u\overline \nabla f + f\overline \nabla u - \frac{T + R}{R^2}\overline \nabla \psi, \\
        \label{LaplacePhi}
        \overline \Delta \phi & = u\overline \Delta f + 2\langle \overline \nabla f, \overline \nabla u\rangle_{\overline g} + f \overline \Delta u- \frac{T + R}{R^2}\overline \Delta \psi,
    \end{align}
    where $\psi := d(x_0,\cdot)^2$. Combining \eqref{SimplifiedMaxima} with \eqref{LaplaceEta} and \eqref{LaplacePhi} we find that at $p$
    \begin{equation}
        \label{FromMaxPrinciple}
        Ce^{L\phi}
        \geq Le^{L\phi}\left(u\overline \Delta f + 2\langle \overline \nabla f, \overline \nabla u\rangle_{\overline g} + f \overline \Delta u- \frac{T + R}{R^2}\overline \Delta \psi\right) + L^2e^{L\phi}|\overline \nabla \phi|_{\overline g}^2.
    \end{equation}
    Next, we would like to estimate the terms in the parentheses in the right hand side of \eqref{FromMaxPrinciple}. First we note that by \eqref{eq:LaplacianDifferenceGraph}
    \begin{equation*}
        \overline \Delta \psi = \overline g^{ij} \Hess_{ij} \psi + \frac{u^2|df|_g^2\langle u^{-1}du,d\psi \rangle_g}{1 + u^2|df|_g^2} - \frac{uH\langle df,d\psi\rangle}{\sqrt{1 + u^2|df|_g^2}}.
    \end{equation*}
    The terms above can be bounded as follows
    \begin{align*}
        |\overline g^{ij} \Hess_{ij} \psi| & \leq C && \text{ by the Hessian comparison theorem} \\
        \left|\frac{u^2|df|_g^2\langle u^{-1}du,d\psi \rangle_g}{1 + u^2|df|_g^2}\right| & \leq \frac{u^2|df|_g^2}{1 + u^2|df|_g^2} \leq C && \text{ by $|d\psi|_g = 1$ and  \autoref{lem:WarpingBounds}} \\
        \left|\frac{uH\langle df,d\psi\rangle}{\sqrt{1 + u^2|df|_g^2}}\right| & \leq \frac{u|df|_g}{\sqrt{1 + u^2|df|_g^2}} \leq C && \text{ by $|d\psi|_g = 1$ and  \autoref{lem:WarpingBounds}}.
    \end{align*}
    Thus it follows that:
    \begin{equation*}
        \left|\frac{T + R}{R^2}\overline \Delta \psi\right| \leq C.
    \end{equation*}
    Further, by the definition of $\overline g$ we have
    \begin{equation*}
        2\langle \overline \nabla f, \overline \nabla u\rangle_{\overline g} 
        = \frac{2\langle df,du\rangle_g}{1 + u^2|df|_g^2}.
    \end{equation*}
    Using \eqref{eq:MeanCurvatureGraph} we find
    \begin{align*}
        u\overline g^{ij}\Hess_{i j}(f) 
        & = H\sqrt{1+u^2 |df|_g^2} - \overline g^{ij}(u_i f_j + f_i u_j + u^2 \langle du,df\rangle_g f_i f_j) \\
        & = H\sqrt{1+u^2 |df|_g^2} - \frac{(2 + u^2|df|_g^2)\langle du,df\rangle_g}{1 + u^2|df|_g^2},
    \end{align*}
    which combined with \eqref{eq:LaplacianDifferenceGraph} results in
    \begin{align*}
        u\overline \Delta f 
        & = H\sqrt{1+u^2 |df|_g^2} - \frac{(2 + u^2|df|_g^2)\langle du,df\rangle_g}{1 + u^2|df|_g^2} + \frac{u^2|df|_g^2\langle d u, d f\rangle_g}{1 + u^2|df|_g^2} - \frac{u^2|df|_g^2 H}{\sqrt{1 + u^2|df|_g^2}} \\
        & = \frac{H}{\sqrt{1 + u^2|df|_g^2}} - \frac{2\langle du,df\rangle_g }{1 + u^2|df|_g^2}.
    \end{align*}
    In the view of \eqref{eq:BoundedMeanCurvature} we have $\left|u\overline \Delta f + 2\langle \overline \nabla f, \overline \nabla u\rangle_{\overline g} \right| \leq |H| \leq C$. Finally, we need to bound the term
    \begin{equation*}
        f\overline \Delta u
        = f\Delta u -\frac{fu^2(\Hess u)(\nabla f, \nabla f)}{1 + u^2|df|_g^2} + \frac{fu|du|_g^2|df|_g^2}{1 + u^2|df|_g^2} -\frac{\langle df,du\rangle_g fuH}{\sqrt{1 + u^2|df|_g^2}}.
    \end{equation*}
    For this, we use \autoref{lem:WarpingBounds} which implies the bounds $|du|_g, |\nabla^2 u|_g \leq Cu$ for some constant $C$ that depends on $(M,g,K)$, $|u - \rho^{-1}|_{C^{2,0}_0(M)}$ and $\inf_M u$ and recall our assumption $|uf| \leq T$ on $B_R(x_0)$. Hence
    \begin{equation*}
        |f\Delta u| 
            \leq C, \quad
        \left|\frac{fu^2(\Hess u)(\nabla f, \nabla f)}{1 + u^2|df|_g^2}\right| 
            \leq C, \quad
        \frac{uf|du|_g^2|df|_g^2}{1 + u^2|df|_g^2} 
            \leq C, \quad
        \frac{\langle df,du\rangle_g ufH}{\sqrt{1 + u^2|df|_g^2}} 
            \leq C.
    \end{equation*}
    This implies: $|f\overline{\Delta} u| \leq C$. 

    In summary, choosing $L$ to be large enough with respect to $C$, \eqref{FromMaxPrinciple} we obtain
    \begin{equation}
        \label{eq:PhiBound}
        |\overline \nabla \phi|_{\overline g}^2 \leq \frac{C}{L}.
    \end{equation}
    On the other hand, by \eqref{DerivativePhi} we have
    \begin{equation*}
        |\overline \nabla \phi|_{\overline g}^2 
        = \left|udf + fdu - \frac{T + R}{R^2}d\psi\right|_g^2 - \frac{\big\langle udf,udf + fdu - \frac{T + R}{R^2}d\psi \big\rangle_g^2}{1 + u^2|df|_g^2}.
    \end{equation*}
    Defining $v := fdu - \frac{T + R}{R^2}d\psi$, by \autoref{lem:WarpingBounds} we have $|v|_g\leq C$, thus
    \begin{equation*}
        |\overline \nabla \phi|_{\overline g}^2 
        = |udf + v|_g^2 - \frac{\langle udf,udf + v\rangle_g^2 }{1 + u^2|df|_g^2}
        \geq |udf + v|_g^2 - \frac{u^2|df|_g^2|udf + v|_g^2}{1 + u^2|df|_g^2} 
    = \frac{|udf + v|_g^2}{1 + u^2|df|_g^2}.
    \end{equation*}
    Using the inequality $|X + Y|^2 \geq \frac{1}{2}|X|^2 - |Y|^2$ and the bound $|v|_g \leq C$ to get
    \begin{equation*}
        |\overline \nabla \phi|_{\overline g}^2 \geq \frac{u^2|df|_g^2 - C}{2(1 + u^2|df|_g^2)}. 
    \end{equation*}
    Choosing $L>4C$ in \eqref{eq:PhiBound} we get 
    \begin{equation*}
        \frac{u^2|df|_g^2 - C}{2(1 + u^2|df|_g^2)} \leq \frac{1}{4}. 
    \end{equation*}
    All in all, we conclude that $u|df|_g \leq C$ at a local maximum point $p$ of the function
    \begin{equation*}
        w^{-1/2}\eta = (e^{L\phi} - 1)(1 + u^2|df|_g^2)^{1/4}.
    \end{equation*}
    Noting that $\phi \leq 3T + 2R \leq C$ on $B_R(x_0)$ we get
    \begin{equation*}
        \sup_{x \in B_R(x_0)} w^{-1/2}\eta \leq C\cdot (e^{LC} - 1).
    \end{equation*}
    In particular, we have  
    \begin{equation*}
        w(x_0)^{-1/2}\eta(x_0) \leq C(e^{LC} - 1).    
    \end{equation*}
    If we now assume that $L \geq 2/R$ we have $\eta(x_0) = e^{LR} - 1 \geq 1$ which implies
    \begin{equation*}
        w^{-1/2}(x_0) \leq C(e^{LC} - 1)\eta(x_0)^{-1} \leq C(e^{LC} - 1).
    \end{equation*}
    After fixing any $L$ satisfying the above requirements, we have the desired result.
\end{proof}

We are now ready to construct a so called geometric solution to the generalized Jang equation, a complete embedded hypersurface $\Gamma \subset M\times_u \mathbb R$ having prescribed mean curvature $H_\Gamma = \tr_\Gamma\overline K$. We assume that $(u - \rho^{-1}) \in C^{3,\alpha}_0(M)$ and we choose an exhaustion of $M$ by precompact sets $\Omega_m$ such that $\partial \Omega_m = \{\rho = \rho_m\}$, where $\rho_m \to 0$. For every sufficiently small $\epsilon_m > 0$, \autoref{RegularizedBVP} implies the existence of a function $f_m \in C^{3,\alpha}(\overline \Omega_m)$ satisfying
\begin{align*}
    \begin{cases}
        \mathcal J(f_m) = \epsilon_m f_m, & \text{ in $\Omega_m$,} \\
        f_m = 0, & \text{ on $\partial \Omega_m$.}
    \end{cases}
\end{align*}
The next step is to obtain the limit of the graphs $\Gamma(f_m) \subset \widetilde M$ when $\epsilon_m \to 0$. In dimension $3$, one can derive uniform bounds on the second fundamental form, which in turn implies a precompactness result for $\Gamma(f_m)$. However, these estimates are not available in higher dimensions. Instead, we will use the \autoref{prop:LimitSubmanifolds} proven in \autoref{appendix_LimitManifold} by adapting the results of Eichmair \cite[Appendix A]{EichmairPlateau} to the warped product setting. As a result we obtain the following.\\

\begin{restatable}[Existence of geometric solutions]{theorem}{JangGeometric}
    \label{thm:JangGeometric}
    Suppose that $\alpha \in (0,1)$, $\tau > n/2$, that $(M,g,K)$ is a $C^{2,\alpha}_\tau$ asymptotically anti-de Sitter initial data set and that $(u - \rho^{-1})\in C^{2,\alpha}_0(M)$. Suppose further that $\{f_m\}_{m = 1}^\infty \subset C^{3,\alpha}_\text{loc}$ are solutions to the boundary value problem \eqref{RegularJang}-\eqref{RegularJangBoundary} with $\epsilon_m \to 0$ on precompact domains $\Omega_m$ such that $\bigcup_{m = 0}^\infty \Omega_m = M$. 
    
    Then there exists an open set $E \subset M \times_u \mathbb R$ and an oriented $C^{3,\alpha}$ submanifold $\Gamma = \partial E$ of $\widetilde M$ with the unit normal pointing out of $E$ denoted by $\nu$ such that each connected component of $\Gamma$ is either a graph or a cylinder and the mean curvature of $\Gamma$ computed as the tangential divergence of $\nu$ is given by
    \begin{align*}
        H = \tr_{\Gamma} \overline K,
    \end{align*}
    where $\overline{K}$ is the extension of $K$ as in \eqref{eq:ExtensionK}. Let $f: U \to \mathbb R$ be the function such that the graphical part of $\Gamma$ is $\Gamma(f)$. Then one of the connected components of $U$ is of the form $M\setminus K_0$ for some compact set $K_0 \subset M$ and the restriction of $f$ to this connected component satisfies 
    \begin{equation}
        \label{EndAsymptotics}
        f \in C^{3,\alpha}_{\tau + 1}(M\setminus K_0). 
    \end{equation}
    The boundary of $U$ decomposes as $\partial U = \partial^+ U \cup \partial^- U$ where $f(x) \to \infty$ (respectively $-\infty$) as $x \to \partial^+ U$ (respectively $\partial^- U$). Each connected component of $\partial^+ U$ and $\partial^- U$ is a $C^{3,\alpha}$ submanifold of $M$ and its mean curvature, $H^{\partial^\pm U}$, computed as the tangential divergence of the unit normal pointing out of $U$ satisfies:
    \begin{equation}
        \label{MeanCurvatureMOTS}
        H^{\partial^\pm U} = \pm \tr_{\partial^\pm U} K. 
    \end{equation}
   Finally, there is a $T > 0$ such that $\Gamma(f) \cap \{|t| > T\}$ can be written as a graph over $\partial^\pm U \times \mathbb R$. 
\end{restatable}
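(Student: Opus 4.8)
The plan is to obtain the geometric solution as a subsequential limit of the regularized graphs $\Gamma(f_m)$ by geometric measure theory in the spirit of Eichmair \cite{EichmairPlateau}, and then to read off the stated structure from the a priori estimates of this section together with the barriers of \autoref{sec_Barriers}. First I would record the uniform control available along the sequence: by \autoref{lem:WarpingBounds} and the decay of $K$, the prescribed mean curvature $\tr_{\Gamma(f_m)}\overline K+\epsilon_m f_m$ of $\Gamma(f_m)$ can be written as $F_m(\,\cdot\,,\nu)$ for functions $F_m$ of position and normal direction that are uniformly bounded in $C^1$ on the relevant sphere bundle — here one uses the interior $C^0$ bound $|\epsilon_m f_m|\le\sup|\tr_g K|$ from the proof of \autoref{prop:AprioriEstimates} — and non-decreasing in $t$; hence \autoref{DifferentialInequalityGraphs} yields $\Delta^{\overline g}w_m^{1/2}\le Cw_m^{1/2}$ for $w_m=\langle u^{-1}\partial_t,-\nu\rangle$, and \autoref{GradientFromOscillation} yields interior gradient bounds for $f_m$ in terms of $\sup|uf_m|$. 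These are exactly the hypotheses of \autoref{prop:LimitSubmanifolds}: passing to the subgraphs $E_m=\{t<f_m\}$, uniform local area and curvature-type bounds give, after extracting a subsequence, an open set $E\subset M\times_u\mathbb R$ whose boundary $\Gamma=\partial E$ is, in dimensions $3\le n\le 7$, a $C^{3,\alpha}$ embedded hypersurface with $H=\tr_\Gamma\overline K$, each component a graph or a vertical cylinder, and such that the graphical part $\Gamma(f)$ over $U\subset M$ can blow up only at cylinders over hypersurfaces of $M$. The dimension restriction is precisely the interior regularity threshold for codimension-one area minimizers and enters only here.

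Next I would localize near the distinguished end. Since $\tau>n/2$, the coordinate spheres $\{\rho=\mathrm{const}\}$ in $\{\rho<\rho_0\}$ satisfy $H_{\{\rho=\mathrm{const}\}}-|\tr_{\{\rho=\mathrm{const}\}}K|>0$ for $\rho_0$ small — the mean curvature is of order $\rho^{-1}$ while the trace of $K$ is comparatively small — so a foliation/barrier argument as in the boundary $C^1$ estimate in the proof of \autoref{prop:AprioriEstimates} shows that $\{\rho<\rho_0\}$ contains no marginally trapped surface, hence no cylindrical component of $\Gamma$; thus $\Gamma$ is entirely graphical over $\{\rho<\rho_0\}$ and $\{\rho<\rho_0\}$ lies in a single component of $U$, necessarily of the form $M\setminus K_0$. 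On this component $f$ solves $\mathcal J(f)=0$, and comparing with $f_\pm=\pm f_+$ from \autoref{prop:ExistenceBarriers} via \autoref{lem:ComparisonJang} — admissible because $f$ is finite on $\{\rho\le\rho_0\}$ while $\partial_\rho f_\pm\to\pm\infty$ at $\rho_0$, which rules out an interior touching — gives $|f|\le f_+\le C\rho^{\tau+1}$ near infinity. With this crude decay, $u^2|df|_g^2=O(\rho^{2\tau})\to 0$, and the generalized Jang equation reduces to leading order to the asymptotically hyperbolic linear equation $u\Delta^g f+2\langle\nabla u,\nabla f\rangle_g=\tr_g K+(\text{higher order})=O(\rho^\tau)$; a standard weighted Schauder bootstrap, using that $\tau+1$ avoids the indicial roots because $\tau>n/2$, upgrades this to $f\in C^{3,\alpha}_{\tau+1}(M\setminus K_0)$, which is \eqref{EndAsymptotics}.

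For the remaining boundary behaviour I would analyse the cylindrical components from Step 1. By \autoref{GradientFromOscillation}, $f$ can tend to $+\infty$ (resp. $-\infty$) along $x\to\partial U$ only where $|uf|\to\infty$, and there the graph $\Gamma(f)$ asymptotes to a vertical cylinder $\Sigma\times\mathbb R$ over a $C^{3,\alpha}$ hypersurface $\Sigma\subset M$; let $\partial^+U$ (resp. $\partial^-U$) be the union of these $\Sigma$, so $\partial U=\partial^+U\cup\partial^-U$. Passing to the limit in $H_{\Gamma(f)}=\tr_{\Gamma(f)}\overline K$ along the blow-up, the vertical direction contributes $u^{-2}A(\partial_t,\partial_t)=u^{-1}\langle\nabla u,\nu\rangle$ to the cylinder's mean curvature, while \eqref{eq:PresecribedMeanCurvature} shows $\tr_{\Gamma(f)}\overline K\to\tr_\Sigma K+u^{-1}\langle\nabla u,\nu\rangle$ since $w\to 0$; the $u$-terms cancel, leaving $H_\Sigma=\tr_\Sigma K$ with respect to the horizontal limit normal $\widehat{\nabla f}$, which points out of $U$ along $\partial^+U$ and into $U$ along $\partial^-U$, and re-expressing with respect to the outward normal of $U$ gives \eqref{MeanCurvatureMOTS}. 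Finally, parametrizing $\Gamma(f)\cap\{|t|>T\}$ by the signed distance to $\partial^\pm U$ and using that $f\to\pm\infty$ uniformly there, the convergence of $\Gamma(f)$ to the cylinders $\partial^\pm U\times\mathbb R$ — again part of \autoref{prop:LimitSubmanifolds} — shows that for $T$ large this set is a normal graph over $\partial^\pm U\times\mathbb R$.

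The main obstacle is Step 1: showing that the weak geometric limit is a smooth embedded hypersurface carrying exactly the prescribed mean curvature $\tr_\Gamma\overline K$ together with the graph/cylinder dichotomy. This is where the estimates \autoref{DifferentialInequalityGraphs} and \autoref{GradientFromOscillation} and the bound $n\le 7$ are indispensable, and it is the content of \autoref{prop:LimitSubmanifolds}. After that, the delicate point in the main text is making the barrier comparison of Step 2 rigorous on the limit hypersurface rather than on the approximating $f_m$ (where \autoref{lem:ComparisonJang} applies directly), and then propagating the coarse barrier bound $|f|\le C\rho^{\tau+1}$ to the sharp weighted regularity \eqref{EndAsymptotics}.
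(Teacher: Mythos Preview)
Your outline is essentially correct and follows the same architecture as the paper: pass to a geometric limit via \autoref{prop:LimitSubmanifolds}, use the Harnack-type inequality from \autoref{DifferentialInequalityGraphs} together with the strong maximum principle to get the graph/cylinder dichotomy, control the end via the barriers of \autoref{prop:ExistenceBarriers}, and analyse the blow-up sets by taking vertical translates and passing to the limit again.

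There is one methodological difference worth flagging, and you in fact identify it yourself in your last paragraph. In Step~2 you first argue that $\{\rho<\rho_0\}$ contains no cylindrical component (via a ``no MOTS near infinity'' foliation argument) so that the limit $f$ is defined there, and only then compare $f$ with the barriers $f_\pm$. The paper does this in the opposite, simpler order: since $f_\pm>0>f_-$ and $\mathcal J(f_+)<0<\mathcal J(f_-)$, one has $\mathcal J(f_+)\le \epsilon_m f_+$ and $\mathcal J(f_-)\ge \epsilon_m f_-$, so \autoref{lem:ComparisonJang} applies \emph{directly to each $f_m$} on the annulus $\{\rho_m\le\rho\le\rho_0\}$ (with zero boundary data on $\{\rho=\rho_m\}$), giving $f_-\le f_m\le f_+$ uniformly in $m$. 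The uniform bound $|f_m|\le C\rho^{\tau+1}$ then survives the limit and in particular forces the limit to be graphical on $\{\rho<\rho_0\}$ for free, with no separate ``no MOTS'' argument needed. This is exactly the route you allude to as the cleaner one.

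Two smaller remarks. First, the graph/cylinder dichotomy is not part of the conclusion of \autoref{prop:LimitSubmanifolds}; the paper obtains it afterwards by passing the inequality $\Delta^{\overline g}w^{1/2}\le Cw^{1/2}$ to the limit $\Gamma$ via the $C^{3,\alpha}_{\text{loc}}$ convergence and applying the strong maximum principle on each component. Second, the upgrade from $|f|\le C\rho^{\tau+1}$ to $f\in C^{3,\alpha}_{\tau+1}$ does not require any indicial-root analysis: once \autoref{GradientFromOscillation} gives $u|df|_g\le C$, the equation $u^{-1}\mathcal J(f)=0$ is uniformly elliptic, and standard local Schauder estimates on balls of fixed hyperbolic radius (as in the proof of \autoref{prop:AprioriEstimates}) bootstrap the $C^0$ decay to the full weighted $C^{3,\alpha}$ decay.
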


\begin{proof}
    The mean curvature of the graph $\Gamma(f_m)$ of $f_m: \Omega_m \to \mathbb R$ satisfying \eqref{RegularJang}-\eqref{RegularJangBoundary} equals
    \begin{equation}
        \label{JangMeanCurvatures}
        H_m
        = \epsilon_m f_m + \left(g^{ij} - \frac{u^2f_m^if_m^j}{1 + u^2|df_m|_g^2}\right) K_{ij} + \frac{\langle \nu_m, \nabla u\rangle}{u}\left(1 - \langle -\nu_m, u^{-1}\partial_t \rangle^2\right)
    \end{equation}
    Thus the hypothesis of \autoref{prop:LimitSubmanifolds} is satisfied with $F_m$ and $F$ given by
    \begin{align*}
        F_m(x,t,X) & = \tr_{\widetilde g}K - K(X,X) + \frac{\langle X, \nabla u\rangle}{u}\left(1 - \langle -X, u^{-1}\partial_t \rangle^2\right) + \epsilon_m t, \\
        F(x,t,X) & = \tr_{\widetilde g}K - K(X,X) + \frac{\langle X, \nabla u\rangle}{u}\left(1 - \langle -X, u^{-1}\partial_t \rangle^2\right).
    \end{align*}
    Thus \autoref{prop:LimitSubmanifolds} implies that there is a subsequence of $\{f_m\}_m$, denoted by the same notation, an open set $E \subset M\times_u \mathbb R$ whose boundary $\Gamma = \partial E$ is a $C^{3,\alpha}$ submanifold of $M\times_u \mathbb R$ such that the $\Gamma(f_m)$ converge locally to $\Gamma$ in $C_\text{loc}^{3,\alpha}$ as graphs over the tangent spaces of $\Gamma$. In addition, we have the convergence of the indicator functions
    \begin{equation*}
        \chi_{E_m} \to \chi_{E} \quad \text{in} \quad \text{BV}_{\text{loc}}(M\times_u \mathbb R).
    \end{equation*}
    Let $\nu$ be the unit normal of $\Gamma$ pointing out of $E$. Since the normals $\nu_m$ of the $\Gamma(f_m)$ are downward pointing we find that $\langle u^{-1}\partial_t,-\nu_m \rangle > 0$ for all $m$ and hence $w = \langle u^{-1}\partial_t, -\nu\rangle \geq 0$ on $\Gamma$. Due to the local $C^{3,\alpha}$ convergence of $\Gamma(f_m)$ to $\Gamma$ in \autoref{DifferentialInequalityGraphs}, we find that
    \begin{equation*}
        \Delta^{\overline g} w^{1/2} - Cw^{1/2} \leq 0,
    \end{equation*}
    on $\Gamma$ for some $C > 0$. By the strong maximum principle (cf. \cite[Theorem $3.5$]{GilbargTrudinger}) we conclude that either $w > 0$ or $w \equiv 0$ on each connected component of $\Gamma$. In the first case the component is graphical and in the second case it is cylindrical.

    Let $f: U \to \mathbb R$ be the graphing function of the graphical part of $\Gamma$. The barriers constructed in \Secref{sec_Barriers} satisfy $f_- < 0 < f_+$ and $\mathcal J(f_+) < 0 < \mathcal J(f_-)$, which implies $\mathcal J(f_+) < \epsilon_mf_+$ and $\mathcal J(f_-) > \epsilon_m f_-$. \autoref{lem:ComparisonJang} now implies that outside of a compact set $K_0$ we have: $f_- \leq f_m \leq f_+$ for all $m$. The estimates $|f_\pm| \leq C\rho^{\tau + 1}$ imply $|f_m| \leq C\rho^{\tau + 1}$, which then implies $|f| \leq C\rho^{\tau+1}$ on $M\setminus K_0$. \autoref{GradientFromOscillation} yields $u|df|_g \leq C$ hence $f$ satisfies a uniformly elliptic quasilinear PDE, $u^{-1}\mathcal J(f) = 0$, outside of $K_0$. 

    For any $B_R(x) \subset M\setminus K_0$, we can apply \cite[Theorem $13.6$]{GilbargTrudinger} (see also the proof of \autoref{prop:AprioriEstimates}) to conclude that $|f|_{C^{1,\gamma}(B_r(x))} \leq C$, where $r < R$ is a smaller radius depending only on $(M,g,K,R,C)$ and $\gamma \in (0,1)$. We now treat $u^{-1}\mathcal{J}(f) = 0$ as a linear uniformly elliptic PDE with coefficients bounded in $C^{0,\gamma}(B_r(x))$ and apply standard Schauder estimates \cite[Theorem $6.2$]{GilbargTrudinger} to obtain $|f|_{C^{2,\gamma}(B_r(x))} \leq C\rho^{\tau + 1}$. A standard bootstrap gives us the inequality: $|f|_{C^{k,\alpha}(B_r(x))} \leq C\rho^{\tau + 1}$, for all $B_r(x) \subset M\setminus K_0$ with $r < R$ and $C = C(M,g,K,R)$, establishing \eqref{EndAsymptotics}. 

    Since $\Gamma(f) = \partial E$ is boundaryless it follows that $|f(x)| \to \infty$ as $x\to \partial U$. Thus we have
    \begin{equation*}
        \partial U = \partial^+ U \cup \partial^- U \quad \text{ where } \quad \partial U^\pm := \{x_0 \in \partial U: \lim_{x\to x_0}f(x) = \pm \infty\}.
    \end{equation*}
    Clearly, for $t \in \mathbb R$, the mean curvature of the graph $\Gamma(f - t) \subset M \times_u \mathbb R$ is independent of $t$ and is given by
    \begin{align*}
        F(x,X) := \tr_{\widetilde g}K - K(X,X) + \frac{\langle X, \nabla u\rangle}{u}\left(1 - \langle -X, u^{-1}\partial_t \rangle^2\right).
    \end{align*}
    Letting $t\to \infty$ and applying \autoref{prop:LimitSubmanifolds} again, up to passing to a subsequence, we obtain a $C^{3,\alpha}$ surface $G = \partial W$ in the limit, where $W \subset M \times_u \mathbb R$ is open. The convergence of the indicator functions in \autoref{prop:LimitSubmanifolds} implies $W = \pi_M(W) \times \mathbb R$. By construction we must have $\partial^+ U = \partial (\pi_M(W))$ and hence $G = \partial^+ U \times \mathbb R$. The cylinder $G$ is $C^{3,\alpha}$ and hence so is $\partial^+ U$. Letting $t \to -\infty$, we conclude that $\partial^- U$ is $C^{3,\alpha}$ as well.

    We will now show that $\Gamma(f) \cap \{|t| > T\}$ is a graph over $\partial^\pm U \times \mathbb R$ for $T$ large enough. To this end, let $N$ be a $C^1$ unit normal vector field defined in a neighborhood $\Omega$ of $\partial^+ U \subset M$ and extend $N$ to all of $\Omega \times \mathbb R$ by requiring that $N_{(x,t)} = N_{(x,t+s)}$ for all $t,s \in \mathbb R$. Suppose on the contrary that for any $T > 0$, the set $\Gamma(f) \cap \{t > T\}$ cannot be written as a graph over $\partial^+ U \times \mathbb R$. In this case, there is a sequence of points $\{(x_l,f(x_l))\}\subset \Gamma$ such that
    \begin{align*}
        \lim_{l \to \infty} \text{dist}(x_l,\partial^+ U) = 0, \quad 
        \lim_{l \to \infty} f(x_l) = \infty, \quad 
        \langle \nu, N \rangle_g|_{(x_l,f(x_l))} = 0.
    \end{align*}
    Passing to a subsequence we can assume that $x_l \to x_0 \in \partial^+ U$ and that the submanifolds $\Gamma(f_l - f(x_l)) \cap \{-1 < t < 1\}$ $C^{3,\alpha}$ converge to $\partial^+ U \times (-1,1)$. Let $\nu^+$ denote the unit normal of $\partial^+ U$ pointing out of $W$, we then find
    \begin{align*}
        \pm 1 = \langle \nu^+,N\rangle|_{(x_0,0)} = \lim_{l \to \infty} \langle \nu, N \rangle|_{(x_l,f(x_l))} = \lim_{l \to \infty} 0 = 0,
    \end{align*}
    a contradiction. Repeating the same argument for $t \to -\infty$, we conclude that for some $T > 0$, $\Gamma(f) \cap \{t > |T|\}$ is a graph over $\partial^\pm U \times \mathbb R$. 

    Lastly we compute the mean curvature of $\partial^\pm U \subset M$, the tangential divergence of the normal $\nu$ pointing out of $U$. In the view of the above argument,  \autoref{prop:LimitSubmanifolds} implies $H^{\partial^\pm U \times \mathbb R} = \tr_{\partial^\pm U \times \mathbb R}\overline K$. At the same time, we note that \eqref{eq:PresecribedMeanCurvature} and \eqref{eq:WarpedChristoffels} yield
    \begin{align*}
        \tr_{\partial^\pm U\times \mathbb R}\overline K
        & = \tr_{\partial^\pm U\times \mathbb R} K + \frac{\langle \nu, \nabla u\rangle}{u}\left(1 - \langle -\nu, u^{-1}\partial_t \rangle^2\right) 
        = \tr_{\partial^\pm U} K + \frac{\langle \nu, \nabla u\rangle}{u}, \\
        H^{\partial^\pm U \times \mathbb R} - (\pm H^{\partial^\pm U})
        & = -g^{tt}\langle \nabla_{\partial_t} \nu, \partial_t\rangle 
        = u^{-2}\langle \nu, \nabla_{\partial_t}\partial_t \rangle
        = u^{-2}\langle\nu,u\nabla u\rangle 
        = \frac{\langle \nu,\nabla u\rangle}{u},
    \end{align*}
    and hence $\pm H^{\partial^\pm U} = \tr_{\partial^\pm U} K$.
\end{proof}

\begin{remark}
    \label{rem:MOTSMITS}
    In the proof of the above proposition we have not discussed the asymptotics of the asymptotically cylindrical ends. These asymptotics remain the same no matter which initial data we use, be it asymptotically hyperboloidal, asymptotically Euclidean or asymptotically anti-de Sitter. The reader is referred to Metzger \cite{MetzgerBlowUp}, Han and Khuri \cite{HanKhuri} and Yu \cite{YuBlowup} for results in this direction that all apply in our case, under some additional assumptions on the set $\partial^\pm U$ and/or the warping factor $u$ in a neighborhood of $\partial^\pm U$.  
\end{remark}


\section{A Jang equation reduction argument for asymptotically anti-de Sitter initial data sets}
\label{sec_MassChange}
In this section we discuss applications of the generalized Jang equation in the context of the positive mass theorem for asymptotically anti-de Sitter initial data sets. We start by recalling the definition of mass that is relevant here, see Michel \cite[Section $4.3$]{Michel} and Chruściel and Herzlich \cite{ChruscielHerzlich}.\\
\begin{definition}
    Suppose that $\alpha \in (0,1)$, $\tau > n/2$, that $(M,g)$ is a $C^{2,\alpha}_\tau$ asymptotically hyperbolic manifold (possibly with boundary) as in \autoref{def:AsympHypManifolds} such that
    \begin{equation}
        \label{eq:IntegrableScalarCurvature}
        \frac{\Scal^g + n(n-1)}{\rho} \in L^1(M,g).
    \end{equation}
    Letting $\{x^i\}$ be the restrictions of Cartesian coordinates of $\mathbb R^n$ to $S^{n-1}$ we define
    \begin{equation}
        \label{KernelFunctions}
        \mathcal N := \operatorname{span}\left\{V_0 := \frac{1-\rho}{\rho}, \quad V_1 := \frac{x^1}{\rho}, \quad \dots \quad V_n := \frac{x^n}{\rho}\right\}.
    \end{equation}
    Defining $e:= g - b$ and $c_n^{-1} = 2\omega_{n-1}(n-1)$, then for $V \in \mathcal N$ we have
    \begin{equation}
        \label{MassFunctionalDefinition}
        \M_{g}(V)
        = c_n\lim_{\rho \to 0}\int_{\{|x| = \rho\}} [V(\div^{b} e - d(\tr_{b} e)) - e(\nabla^{b} V,\cdot) + (\tr_{b} e)dV](\nu)d\mu^b,
    \end{equation} 
    is a well defined linear map called the \emph{mass functional} of $(M,g)$. 
\end{definition}
Recall that in \Secref{sec_GeometricSolution} we constructed a geometric solution to the Jang equation as a submanifold of the warped product $M \times_u \mathbb{R}$ with prescribed mean curvature. We also noted that this geometric solution has a $C^{3,\alpha}_\tau$ asymptotically hyperbolic component $\Gamma(f)$, which is given as the graph of a function $f \in C^{3,\alpha}_{\tau + 1}(U)$ defined outside of a compact subset of $M$.\\

\begin{theorem}[Mass change and Jang reduction]
    \label{MassChangeJang}
    Suppose that $\alpha \in (0,1)$, $\tau > n/2$, that $(M,g,K)$ is a $C^{2,\alpha}_{\tau}$ asymptotically anti-de Sitter initial data set with $g$ satisfying \eqref{eq:IntegrableScalarCurvature} and that $u - \rho^{-1} \in C^{2,\alpha}_0(M)$. Suppose further that $f \in C^{3,\alpha}_{\tau+1}(U)$ is a solution to the Jang equation on an open set $U \subset M$ such that $M\setminus U$ is compact and $\partial U$ is $C^{3,\alpha}_\text{loc}$. Then the mass functional of $(\Gamma(f),\overline g = g + u^2df^2)$ is well defined and coincides with the one of $(M,g)$. 
\end{theorem}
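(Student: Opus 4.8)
\emph{Strategy.} The idea is to transport everything to $M$ via the projection $\pi_M\colon \Gamma(f)\to M$, $(x,f(x))\mapsto x$, which restricted to the asymptotic region is a diffeomorphism onto $M\setminus K_0$; using $\pi_M$ we equip $\Gamma(f)$ with the chart at infinity of $(M,g)$, so that the two mass functionals are computed with respect to the \emph{same} background. Under this identification $\overline g - g = u^2\,df\otimes df =: \tilde e$, and the whole argument rests on the observation that $\tilde e$ decays like $\rho^{2\tau}$, i.e. twice as fast as the ``mass order'' $\tau$. Indeed, $u-\rho^{-1}\in C^{2,\alpha}_0$ gives $\rho^2u^2-1\in C^{2,\alpha}_1$ (the cancellation $\rho^2u^2 = 1+O(\rho)$ is the ``almost conformal to Euclidean'' feature), while $f\in C^{3,\alpha}_{\tau+1}$ gives $df\otimes df\in C^{2,\alpha}_{2\tau+2}(S^2)$; multiplying by $\rho^{-2}\in C^{2,\alpha}_{-2}$ and using that weighted Hölder norms are submultiplicative with additive weights yields $\tilde e\in C^{2,\alpha}_{2\tau}(M\setminus K_0;S^2)$. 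Since $2\tau>\tau$, we get $\overline g - b = (g-b)+\tilde e\in C^{2,\alpha}_\tau$, so $(\Gamma(f),\overline g)$ is a $C^{2,\alpha}_\tau$ asymptotically hyperbolic manifold (with boundary $\Gamma(f|_{\partial U})$, which is irrelevant near infinity).

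Next I would verify the integrability condition \eqref{eq:IntegrableScalarCurvature} for $\overline g$. By \autoref{prop:GeneralizedSchoenYauIdentity},
\[
\Scal^{\overline g}+n(n-1)=2(\mu-J(w))+|A-\overline K|_{\overline g}^2+2|q|_{\overline g}^2-\tfrac2u\div_{\overline g}(uq).
\]
Using $K\in C^{1,\alpha}_\tau$ and \autoref{lem:WarpingBounds} one checks, in the asymptotic region, that $|K|_g,|\tr_gK|,|J|_g=O(\rho^\tau)$ and $u|df|_g=O(\rho^\tau)$, whence $|w|_g=O(\rho^\tau)$, $|A-\overline K|_{\overline g}=O(\rho^\tau)$, $J(w)=O(\rho^{2\tau})$, $|q|_{\overline g}=O(\rho^{2\tau})$ and $\tfrac1u\div_{\overline g}(uq)=O(\rho^{2\tau})$; moreover $2\mu-(\Scal^g+n(n-1))=(\tr_gK)^2-|K|_g^2=O(\rho^{2\tau})$. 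Since the volume form of $\overline g$ is comparable near infinity to $\rho^{-n}\,d\rho\,d\sigma$, a quantity of size $\rho^{2\tau}$ contributes $\rho^{2\tau-1}$ after division by $\rho$, and $\int_0\rho^{2\tau-1}\rho^{-n}\,d\rho<\infty$ precisely because $\tau>n/2$. Combining this with the hypothesis $(\Scal^g+n(n-1))/\rho\in L^1(M,g)$ and the fact that $\overline g$ and $g$ have comparable volume forms near infinity while being controlled on the (compact) remainder, we conclude $(\Scal^{\overline g}+n(n-1))/\rho\in L^1(\Gamma(f),\overline g)$; hence the mass functional of $(\Gamma(f),\overline g)$ is well defined by the cited results of Michel and Chru\'sciel--Herzlich.

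It remains to prove $\M_{\overline g}(V)=\M_g(V)$ for every $V\in\mathcal N$. The integrand in \eqref{MassFunctionalDefinition} is linear in $e=g-b$ and all its ingredients ($\div^b$, $\tr_b$, $\nabla^b$, contraction with the $b$-unit normal $\nu$) are taken with respect to the fixed background $b$; since we use the same chart at infinity for both metrics,
\[
\M_{\overline g}(V)-\M_g(V)=c_n\lim_{\rho\to 0}\int_{\{\rho=\epsilon\}}\bigl[V(\div^b\tilde e-d\tr_b\tilde e)-\tilde e(\nabla^b V,\cdot)+(\tr_b\tilde e)\,dV\bigr](\nu)\,d\mu^b.
\]
For $V\in\mathcal N$ one has $|V|,|dV|_b=O(\rho^{-1})$, while $|\tilde e|_b,|\nabla^b\tilde e|_b=O(\rho^{2\tau})$ by the first step; the slice $\{\rho=\epsilon\}$ has $b$-volume $O(\epsilon^{-(n-1)})$. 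Hence the integrand is $O(\epsilon^{2\tau-1})$ pointwise on $\{\rho=\epsilon\}$ and the integral is $O(\epsilon^{2\tau-n})$, which tends to $0$ as $\epsilon\to 0$ because $\tau>n/2$. Therefore $\M_{\overline g}(V)=\M_g(V)$, completing the proof.

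\emph{Main obstacle.} The only genuinely delicate point is the weight bookkeeping: one must recognize that $u^2\,df\otimes df$ carries the weight $\rho^{2\tau}$ rather than $\rho^\tau$ (owing to $\rho^2u^2=1+O(\rho)$ and to $f$ decaying one order faster than $\tau$ in the hyperbolic scale), and that the hypothesis $\tau>n/2$ plays a double role --- it makes $\rho^{2\tau}/\rho$ integrable against the hyperbolic volume in the second step, and it makes the boundary term $O(\rho^{2\tau-n})$ vanish in the third. Once these weights are in hand, the argument is the standard comparison of Chru\'sciel--Herzlich type mass integrals.
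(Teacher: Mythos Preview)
Your proof is correct and follows essentially the same approach as the paper: both use the Schoen--Yau identity to verify the integrability condition for $\overline g$, observe that $u^2\,df\otimes df\in C^{2,\alpha}_{2\tau}$ so that $\overline g-b\in C^{2,\alpha}_\tau$, and conclude equality of the mass functionals from the fact that the contribution of $\tilde e=u^2\,df\otimes df$ to the flux integral is $O(\rho^{2\tau-n})\to 0$. Your version is considerably more explicit than the paper's, which compresses the last step into ``follows from a straightforward computation using \eqref{MassFunctionalDefinition}'' and only briefly gestures at the decay estimates in the Schoen--Yau identity.
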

\begin{proof}
    The identity \eqref{eq:SchoenYauIdentity} implies
    \begin{equation*}
        \Scal^{\overline g} + n(n-1) = 2(\mu - J(w)) + |A - \overline K|_{\Gamma(f)}|_{\overline g}^2 + 2|q|_{\overline g}^2 - \frac{2}{u}\div_{\overline g}(uq).
    \end{equation*}
    That $(\Scal^g + n(n-1))\rho^{-1} \in L^1(M,g)$, $f \in C^{k+1,\alpha}_{\tau + 1}(U)$, $K\in C^{k-1,\alpha}_{\tau}(M)$ and the identities \eqref{eq:LocalEnergy}, \eqref{eq:LocalCurrent}, \eqref{eq:ExtensionK} and \eqref{eq:DefSchoenYau} combined with the above imply
    \begin{equation*}
        \frac{\Scal^{\overline g} + n(n-1)}{\rho} \in L^1(M,\overline g).
    \end{equation*}
    Moreover we note that $\overline g - b = u^2df^2 + g - b$ where $u^2df^2 \in C^{k,\alpha}_{2\tau}(U)$ and $g - b \in C^{k,\alpha}_{\tau}(U)$. It follows that $(\Gamma(f),g + u^2df^2)$ is a $C^{k,\alpha}_\tau$ asymptotically hyperbolic manifold satisfying \eqref{eq:IntegrableScalarCurvature} and so its mass functional is well defined. That the mass functionals of $(\Gamma(f),g + u^2df^2)$ and $(M,g)$ agree follows from a straightforward computation using \eqref{MassFunctionalDefinition}. 
\end{proof}
Next, we study how the mass functional changes under conformal changes of the metric. In what follows we let
\begin{equation}
    \label{eq:defKappa}
    \kappa := \frac{4}{n-2}.
\end{equation}.

\begin{proposition}
    \label{SimplifiedDifferenceConformalProp}
    Suppose that $\alpha \in (0,1)$, $\tau > n/2$, that $(M,g)$ is a $C^{2,\alpha}_\tau$ asymptotically hyperbolic manifold satisfying \eqref{eq:IntegrableScalarCurvature}. Let $\theta \in C^{k,\alpha}_\tau(M)$ and $\widehat g:= e^{\kappa \theta}g$ be such that
    \begin{equation*}
        (\Scal^{\widehat g} + n(n+1))\rho^{-1} \in L^1(M,\widehat g).
    \end{equation*}
    Then
    \begin{equation}
        \label{SimplifiedDifferenceConformal}
        \M_{\widehat g}(V) - \M_{ g}(V) = \frac{\kappa}{2\omega_{n-1}}\lim_{\rho \to 0}\int_{\{|x| = \rho\}} \bigg(\theta dV - V d\theta\bigg)(\nu) d\mu^b.
    \end{equation}
\end{proposition}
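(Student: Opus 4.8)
The plan is to compute $\M_{\widehat g}(V)$ directly from the definition \eqref{MassFunctionalDefinition} by expanding the integrand in terms of $\theta$ and the error tensor $e = g - b$, keeping only those terms that survive in the limit $\rho \to 0$, and then subtract $\M_g(V)$. First I would introduce $\widehat e := \widehat g - b = e^{\kappa\theta}g - b$ and write $e^{\kappa\theta} = 1 + \kappa\theta + O(\theta^2)$, so that $\widehat e = \kappa\theta \, b + \kappa\theta\, e + e + O(\theta^2)(g)$. Since $\theta \in C^{k,\alpha}_\tau(M)$ and $e \in C^{k,\alpha}_\tau(M)$ with $\tau > n/2$, the cross terms $\theta e$ and the $O(\theta^2)$ terms are $O(\rho^{2\tau})$ with one derivative costing at most a factor $\rho^{-1}$; since the boundary sphere $\{|x| = \rho\}$ has $b$-area blowing up like $\rho^{-(n-1)}$ and the flux integrand carries one derivative, these terms contribute like $\rho^{2\tau - n}\to 0$ because $2\tau > n$. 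Hence in the limit only $\widehat e \rightsquigarrow \kappa\theta\, b + e$ matters, i.e. $\M_{\widehat g}(V) = \M_g(V) + \kappa\,\M_{\text{lin}}(\theta b, V)$, where $\M_{\text{lin}}(\cdot,V)$ denotes the linear flux functional with $e$ replaced by the argument.

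The next step is to evaluate the flux functional on the pure trace perturbation $h := \theta\, b$. Plugging $h$ into the bracket in \eqref{MassFunctionalDefinition}: we have $\tr_b h = n\theta$, $\div^b h - d(\tr_b h) = \div^b(\theta b) - d(n\theta) = d\theta - n\,d\theta = -(n-1)\,d\theta$, and $h(\nabla^b V, \cdot) = \theta\, dV$. Therefore the integrand becomes
\begin{equation*}
    V\bigl(-(n-1)d\theta\bigr) - \theta\, dV + n\theta\, dV = -(n-1)\bigl(V\,d\theta - \theta\, dV\bigr)
\end{equation*}
evaluated on $\nu$. With the normalization constant $c_n = \tfrac{1}{2\omega_{n-1}(n-1)}$, the factor $-(n-1)$ and $c_n$ combine to $-\tfrac{1}{2\omega_{n-1}}$, giving $\kappa\, c_n \cdot (-(n-1)) \int (V d\theta - \theta dV)(\nu)\,d\mu^b = \tfrac{\kappa}{2\omega_{n-1}}\int(\theta dV - V d\theta)(\nu)\,d\mu^b$, which is precisely \eqref{SimplifiedDifferenceConformal}.

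The main obstacle I anticipate is not the algebra of the pure-trace piece but rigorously justifying that all the nonlinear and mixed terms (those involving products of $\theta$ with $e$, higher powers of $\theta$, and the difference between the $\widehat g$- and $b$-normal/area element used implicitly in writing the flux) are genuinely negligible as $\rho\to0$. The cleanest route is to observe that the mass functional \eqref{MassFunctionalDefinition} is, by the Chru\'sciel--Herzlich coordinate-invariance results already cited, well defined and depends only on the asymptotic expansion of the metric, so one may replace $\widehat g$ and $g$ by their ``linearizations'' $b + \kappa\theta b + e$ and $b + e$ modulo terms in $C^{k,\alpha}_{\tau'}$ with $\tau' > n$ (here $\tau' = 2\tau > n$), and such terms contribute zero to the limit by the standard decay estimate $\rho^{\tau'-n}\to 0$. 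I would state this decay bound explicitly as the one technical lemma and then the computation above finishes the proof.
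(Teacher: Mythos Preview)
Your proposal is correct and follows essentially the same route as the paper: both compute the difference of the two flux integrals directly from \eqref{MassFunctionalDefinition}, use the decay $\theta, e \in C^{k,\alpha}_\tau$ with $2\tau > n$ to discard the cross and higher-order terms, and reduce to the explicit flux of the pure-trace perturbation, which yields exactly the integrand $(n-1)(\theta\,dV - V\,d\theta)$. The only cosmetic difference is that the paper keeps $\widetilde e = (e^{\kappa\theta}-1)g$ intact while expanding the four flux terms and linearizes via $e^{\kappa\theta}-1 = \kappa\theta + O(\rho^{2\tau})$ only at the very end, whereas you Taylor-expand first; one small caveat is that in the weighted space $C^{k,\alpha}_\tau$ a $b$-covariant derivative does \emph{not} cost a factor of $\rho$---the $\rho^{-1}$ in your bookkeeping actually comes from the multiplication by $V$ and $dV$, not from differentiation---but your final count $\rho^{2\tau - n}\to 0$ is correct regardless.
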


\begin{proof}
    Noting that
    \begin{equation*}
        \widehat e := \widehat g - b = \widehat g -  g +  g - b = (e^{\kappa\theta} - 1) g +  g - b,
    \end{equation*}
    we see that $(M,\widehat g)$ is a $C^{k,\alpha}_\tau$ asymptotically hyperbolic manifold as well and so the mass functional of $(M,\widehat g)$ is well defined. From \eqref{MassFunctionalDefinition} and the definition of $\widehat g$ it follows that
    \begin{equation}
        \label{DifferenceConformal}
        \M_{\widehat g}(V) - \M_{ g}(V) = c_n\lim_{\rho \to 0}\int_{\{|x| = \rho\}} [V(\div^b \widetilde e - d\tr^b \widetilde e) + (\tr^b \widetilde e) dV - \widetilde e(\nabla^b V,\cdot)](\nu) d\mu^b,
    \end{equation}
    for $V\in \mathcal N$ and $\widetilde e := (e^{\kappa\theta} - 1) g$. Recalling that $e =  g - b$, we can also check that
    \begin{align*}
        V(\div^b \widetilde e)(\nu) 
        &  = \kappa e^{\kappa\theta}V e(\nabla^b \theta,\nu) + \kappa e^{\kappa\theta}Vd\theta(\nu) + (e^{\kappa\theta} - 1)V\div^b  e(\nu), \\
        -V(d\tr^b \widetilde e)(\nu) 
        & = -V (n + \tr^b  e) \kappa e^{\kappa \theta} d\theta(\nu) - V(e^{\kappa \theta} - 1)(d\tr^b  e)(\nu), \\
        (\tr^b \widetilde e) dV(\nu)
        & = (e^{\kappa \theta} - 1)\tr^b(  e)dV(\nu) + n(e^{\kappa\theta} - 1)dV(\nu), \\
        -\widetilde e(\nabla^b V,\nu) 
        & = -(e^{\kappa\theta} - 1)  e(\nabla^b V,\nu) - (e^{\kappa\theta} - 1)dV(\nu).
    \end{align*}
    These identities together with \eqref{DifferenceConformal} imply
    \begin{align*}
        & \M_{\widehat g}(V) - \M_{g}(V) \\
        & = c_n\lim_{\rho \to 0}\int_{\{|x| = \rho\}} \bigg((e^{\kappa\theta} -1)[V(\div^b  e - d\tr^b  e) + (\tr^b  e) dV -  e(\nabla^b V,\cdot)](\nu) \\
        & \quad + \kappa e^{\kappa \theta}V [ e(\nabla^b \theta,\nu) - (\tr^b  e) d\theta(\nu)] + (n-1)[(e^{\kappa \theta} - 1)dV - \kappa Ve^{\kappa \theta}d\theta](\nu)\bigg)d\mu^b.
    \end{align*}
    Recalling that $|\theta| + |\nabla^b \theta|_b \leq C\rho^{\tau}$, $| e|_b + |\nabla^b  e|_b \leq C\rho^{\tau}$, we obtain
    \begin{equation*}
        \M_{\widehat g}(V) - \M_{ g}(V) 
        = (n-1)c_n\lim_{\rho \to 0}\int_{\{|x| = \rho\}} \bigg((e^{\kappa\theta} - 1)dV - \kappa Ve^{\kappa\theta} d\theta\bigg)(\nu) d\mu^b.
    \end{equation*}
    The estimates $e^{\kappa \theta} -1 = \kappa\theta + O(\rho^{2\tau}) = O(\rho^{\tau})$ and $d\theta(\nu) = O(\rho^\tau)$ now imply \eqref{SimplifiedDifferenceConformal}.
\end{proof}
We conclude this section with a theorem that highlights a potential application of the results of this paper in the context of the positive mass conjecture for asymptotically anti-de Sitter initial data sets.\\

\begin{restatable}{theorem}{ChangeOfMassFinal}
\label{ChangeOfMassFinal}
    Suppose that $\alpha \in (0,1)$, $\tau > n/2$, that $(M,g,K)$ is a complete $C^{2,\alpha}_{\tau}$ asymptotically anti-de Sitter initial data set satisfying the dominant energy condition and that $(M,g)$ satisfies \eqref{eq:IntegrableScalarCurvature}. Suppose furthermore that there exists functions $f \in C^{3,\alpha}_{\tau + 1}(M)$ and $u \in C^{2,\alpha}_{\text{loc}}(M)$ such that $u$ is positive, $u - V_0 \in C^{2,\alpha}_{\tau - 1}(M)$ and $(f,u)$ is an entire solution (i.e. defined on all of $M$) to the coupled system
    \begin{equation}
        \label{eq:KeySystem}
        \begin{cases}
            \mathcal J(f) & = 0, \\
            \Delta^{\overline g}u & = nu,
        \end{cases}
    \end{equation}
    where $\overline g = g + u^2df^2$. Then the energy momentum vector of $(M,g,K)$ is future causal,
    \begin{equation*}
        \mathcal M_g(V_0) \geq \sqrt{\sum_{i = 1}^n \mathcal M_g(V_i)^2}.
    \end{equation*}
    If $\mathcal M_g(V_0) = 0$, then $(M,g)$ embeds as a spacelike graphical slice into the anti-de Sitter spacetime with second fundamental form $K$. 
\end{restatable}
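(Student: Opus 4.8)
The plan is to run a Jang-equation reduction: using the solution $(f,u)$ of \eqref{eq:KeySystem}, deform $(M,g,K)$ into a complete asymptotically hyperbolic Riemannian manifold with the same mass functional and with scalar curvature $\ge -n(n-1)$, and then invoke the positive mass theorem for asymptotically hyperbolic manifolds (Chru\'sciel, Galloway, Nguyen and Paetz \cite{ChruscielGallowayNguyenPaetz} for $3\le n\le 7$, or the spinor results \cite{Wang,ChruscielHerzlich,Zhang} in general dimension). Since $f\in C^{3,\alpha}_{\tau+1}(M)$ solves $\mathcal J(f)=0$ on an open set with compact complement and $\overline g=g+u^2df^2\ge g$ is complete whenever $(M,g)$ is, \autoref{MassChangeJang} shows that $(\Gamma(f),\overline g)$ is a complete $C^{2,\alpha}_\tau$ asymptotically hyperbolic manifold satisfying \eqref{eq:IntegrableScalarCurvature}, with $\mathcal M_{\overline g}=\mathcal M_g$ on $\mathcal N$. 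Feeding $\mathcal J(f)=0$ into the generalized Schoen--Yau identity \eqref{eq:SchoenYauIdentity} and using the dominant energy condition --- note $|w|_g<1$, so $\mu-J(w)\ge\mu-|J|_g\ge 0$ --- gives
\begin{equation*}
    \Scal^{\overline g}+n(n-1)\ \ge\ 2|q|_{\overline g}^2-\frac2u\div_{\overline g}(uq)\ \ge\ -\frac2u\div_{\overline g}(uq).
\end{equation*}

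The heart of the argument is then a conformal change $\widehat g=\varphi^{4/(n-2)}\overline g$ with $\varphi>0$, $\varphi\to 1$ at infinity and $\varphi-1\in C^{2,\alpha}_\tau(M)$, chosen so that $\Scal^{\widehat g}\equiv-n(n-1)$; equivalently, $\varphi$ solves the Yamabe-type equation
\begin{equation*}
    -\frac{4(n-1)}{n-2}\Delta^{\overline g}\varphi+\bigl(\Scal^{\overline g}+n(n-1)\bigr)\varphi=n(n-1)\Bigl(\varphi-\varphi^{\frac{n+2}{n-2}}\Bigr).
\end{equation*}
Following Schoen and Yau \cite{SchoenYau2}, Bray and Khuri \cite{BrayKhuri} and Sakovich \cite{SakovichJang}, this is solved by a barrier/variational argument: the non-sign-definite term $-\tfrac2u\div_{\overline g}(uq)$, after integration by parts against the test function and use of $\tfrac{4(n-1)}{n-2}>2$, is absorbed into $2|q|^2$ up to a residue proportional to $\langle q,\nabla u\rangle$, and it is precisely in controlling this residue and establishing the coercivity of the associated weighted quadratic form that the second equation of \eqref{eq:KeySystem} --- that $u>0$ solves $(-\Delta^{\overline g}+n)u=0$ --- is used, playing the role that the warping-factor equation plays in \cite{BrayKhuri}. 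One moreover arranges $\varphi\le 1$. I expect this step --- in particular the coercivity and the decay $\varphi-1\in C^{2,\alpha}_\tau$ --- to be the main obstacle; the range $3\le n\le 7$ enters nowhere here, only through the positive mass theorem used below.

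With $\widehat g$ in hand, $(\Gamma(f),\widehat g)$ is a complete $C^{2,\alpha}_\tau$ asymptotically hyperbolic manifold with $\Scal^{\widehat g}+n(n-1)\equiv 0$, so the asymptotically hyperbolic positive mass theorem shows that its energy-momentum vector $\bigl(\mathcal M_{\widehat g}(V_0),\dots,\mathcal M_{\widehat g}(V_n)\bigr)$ is future causal. Applying \autoref{SimplifiedDifferenceConformalProp} with $\theta:=\log\varphi\le 0$, and using $\Delta^b V=nV$ for $V\in\mathcal N$ to rewrite the mass defect as a bulk integral controlled by $\theta$, the bound of the first paragraph and the dominant energy condition, one obtains $\mathcal M_g(V)=\mathcal M_{\overline g}(V)\ge\mathcal M_{\widehat g}(V)$ for every $V$ in the future causal cone of $\mathcal N$. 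Hence $\mathcal M_g$ is itself future causal, which is the claimed inequality $\mathcal M_g(V_0)\ge\sqrt{\sum_{i=1}^n\mathcal M_g(V_i)^2}$.

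Finally, if $\mathcal M_g(V_0)=0$ then $\mathcal M_{\widehat g}\equiv 0$, so by the rigidity case of the asymptotically hyperbolic positive mass theorem $(\Gamma(f),\widehat g)$ is isometric to $(\mathbb H^n,b)$. Tracing the conformal change backwards forces $\varphi\equiv 1$, so that $\overline g=b$ and $\Scal^{\overline g}=-n(n-1)$; substituting this into \eqref{eq:SchoenYauIdentity} and integrating by parts (the divergence term contributes no boundary term, as $uq$ decays fast enough) forces $\mu=J(w)$, $q\equiv 0$ and $A=\overline K|_{\Gamma(f)}$ on $\Gamma(f)$, i.e.\ the full Jang equation $\frac{u\Hess_{ij}f+u_if_j+f_iu_j}{\sqrt{1+u^2|df|_g^2}}=K_{ij}$ holds. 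As observed by Jang \cite{Jang} and Schoen and Yau \cite{SchoenYau2} (see also Bray and Khuri \cite{BrayKhuri}), this identity together with $\overline g=b$ and $\Delta^{\overline g}u=nu$ allows one to reconstruct $(M,g,K)$ as a spacelike graphical slice of anti-de Sitter spacetime with second fundamental form $K$; carrying out this reconstruction --- in particular recognising the warped product $M\times_u\mathbb R$ as anti-de Sitter spacetime --- is the delicate point of the rigidity statement.
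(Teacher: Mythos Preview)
Your overall architecture (pass to $\overline g$ via \autoref{MassChangeJang}, conformally change to $\widehat g$ with $\Scal^{\widehat g}\equiv -n(n-1)$, apply the asymptotically hyperbolic positive mass theorem, then run rigidity through the Schoen--Yau identity) is exactly the paper's strategy. However, two of your key steps are misattributed, and one of them does not work as stated.

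First, the role of the equation $\Delta^{\overline g}u=nu$. You place it in the existence/coercivity of the conformal factor, but the paper does not use it there at all: the Yamabe problem is solved directly by citing Gicquaud's existence theorem, with no reference to $u$, and no sign condition $\varphi\le 1$ is needed or claimed. The equation $\Delta^{\overline g}u=nu$ is used \emph{only} in the mass comparison, and in a very specific way that your sketch misses.

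Second, and this is the real gap: your claim that $\mathcal M_g(V)\ge \mathcal M_{\widehat g}(V)$ for every future causal $V\in\mathcal N$, via ``$\Delta^b V=nV$'', does not go through. To convert the boundary integral of \autoref{SimplifiedDifferenceConformalProp} into a bulk integral on $(M,\overline g)$ you must integrate by parts with $\Delta^{\overline g}$, not $\Delta^b$, and $\Delta^{\overline g}V_i\neq nV_i$ once $\overline g\neq b$. The paper's trick is to exploit the hypothesis $u-V_0\in C^{2,\alpha}_{\tau-1}$ to replace $V_0$ by $u$ in the flux integral (at the cost of $O(\rho^{2\tau-1})$ errors that vanish in the limit), after which the assumed equation $\Delta^{\overline g}u=nu$ does the work. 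This substitution is only available for $V_0$; there is no analogue for $V_1,\dots,V_n$. Consequently the paper only proves $\mathcal M_{\widehat g}(V_0)\le \mathcal M_g(V_0)$, hence $\mathcal M_g(V_0)\ge 0$, and then obtains causality by a separate Lorentz-boost argument: if the energy-momentum were spacelike, boost to a chart in which $\mathcal M_g'(V_0)<0$, contradiction. Your direct route to causality is therefore unsupported.

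The remaining steps of your sketch (the rigidity argument forcing $\theta\equiv 0$, then $A=\overline K$ from integrating \eqref{eq:SchoenYauIdentity}, then the embedding into anti-de Sitter via the warped-product picture of \cite{BrayKhuri}) match the paper.
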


\begin{proof}
    Suppose the assumptions of the theorem are satisfied. By Gicquaud \cite[Theorem $3.3$]{Gicquaud} there is a function $\theta \in C^{k,\alpha}_\tau(M)$ satisfying the Yamabe equation
    \begin{equation}
        \label{Yamabe}
        -\kappa(n-1)\left(|d\theta|_{\overline g}^2 + \Delta^{\overline g} \theta\right) + \Scal^{\overline g} =  -n(n-1)e^{\kappa \theta},
    \end{equation}
    so that $(M,\widehat g = e^{\kappa \theta}\overline g)$ is a $C^{k,\alpha}_{\tau}$ asymptotically hyperbolic manifold with $\Scal^{\widehat g} \equiv -n(n-1)$. Thus $(M,\widehat g)$ has a well defined mass functional. \autoref{SimplifiedDifferenceConformalProp} applied with $\overline g$ in the place of $g$ combined with the estimates $|\theta dV_0 - V_0 d\theta|_b = O(\rho^{\tau - 1})$ and $|d\mu^b - d\mu^{\overline g}| = O(\rho^{\tau})$ implies:
    \begin{equation*}
        \delta \mathcal M:= \frac{2\omega_{n-1}}{\kappa}[\M_{\widehat g}(V_0) - \M_{\overline g}(V_0)] = \lim_{\rho \to 0}\int_{\{|x| = \rho\}} \bigg(\theta dV_0 - V_0 d\theta\bigg)(\overline \nu) d\mu^{\overline g}.
    \end{equation*}
    Let $\eta \in C^{k,\alpha}_\text{loc}(M)$ be any function such that $\eta - 1 \in C^{k,\alpha}_\tau(M)$. Since $u - V_0 \in C^{k,\alpha}_{\tau-1}(M)$ and $\theta \in C^{k,\alpha}_\tau(M)$ we have
    \begin{align*}
        \theta dV_0 - V_0d\theta 
        & = \theta \underbrace{d(V_0 - u)}_{\in C^{k-1,\alpha}_{\tau-1}(M)} - \underbrace{(V_0 - u)}_{\in C^{k,\alpha}_{\tau-1}(M)}\theta + (1-\eta^2) \cdot \underbrace{(\theta du - ud\theta)}_{\in C^{k-1,\alpha}_{\tau-1}(M)} + \eta^2(\theta du - ud\theta) \\
        & = (\eta \theta)d(\eta u) - (\eta u)d(\eta \theta) + O(\rho^{2\tau-1}).
    \end{align*}
    As a consequence, using the divergence theorem, we can rewrite $\delta \mathcal M$ as
    \begin{align*}
        \delta \mathcal M 
        & = \lim_{\rho \to 0}\int_{\{|x| = \rho\}} [(\eta\theta) d(\eta u) - (\eta u)d(\eta \theta) ](\overline \nu) d\mu^{\overline g}
        = \int_{M} \eta \theta \Delta^{\overline g} (\eta u) - \eta u \Delta^{\overline g} (\eta \theta) dV^{\overline g} \\
        & = \int_{M} \bigg(\eta^2 (\theta \Delta^{\overline g} u - u \Delta^{\overline g} \theta) + \langle d (\eta^2), \theta d u - u d\theta \rangle\bigg) dV^{\overline g}.
    \end{align*}
    Next we note the following inequality which follows from the generalized Schoen and Yau identity and the dominant energy condition, see \eqref{eq:SchoenYauIdentity} and \autoref{def:DEC}:
    \begin{equation*}
        -\Scal^{\overline g} \leq n(n-1) - 2|q|_{\overline g}^2 + \frac{2}{u}\div_{\overline g}(uq).
    \end{equation*}
    Combining the above with \eqref{Yamabe} we obtain
    \begin{equation}
        \label{DeltaTheta}
        -\Delta^{\overline g} \theta = |d \theta|_{\overline g}^2 - \frac{ne^{\kappa \theta}}{\kappa} - \frac{\Scal^{\overline g}}{\kappa(n-1)}
        \leq |d \theta|_{\overline g}^2 -\frac{n}{\kappa}(e^{\kappa \theta} - 1) - \frac{2\left(|q|_{\overline g}^2 - \frac{1}{u}\div_{\overline g}(uq)\right)}{\kappa (n-1)}.
    \end{equation}
    In the view of the assumption $\Delta^{\overline g} u = nu$ and the inequality \eqref{DeltaTheta} we arrive at the estimate
    \begin{align*}
        \delta \mathcal M
        & \leq \int_M n\eta^2u\left(\theta - \frac{e^{\kappa \theta} - 1}{\kappa}\right) dV^{\overline g} - \frac{2}{\kappa(n-1)} \int_M\eta^2 (u|q|_{\overline g}^2 -\div_{\overline g}(uq)) dV^{\overline g} \\
        & \quad + \int_M \bigg(\eta^2 u|d \theta|_{\overline g}^2 + \langle d (\eta^2), \theta du - u d\theta \rangle_{\overline g}\bigg) dV^{\overline g}.
    \end{align*}
    Since $1 + x \leq e^x$ for all $x \in \mathbb{R}$, the first integral above is non-positive. Furthermore, the divergence theorem implies
    \begin{equation*}
        \int_M \eta^2 \div_{\overline g}(uq) dV^{\overline g} 
        = - \int_M u q(\overline \nabla \eta^2) dV^{\overline g}.
    \end{equation*}
    Note that there are no boundary terms in the above formula since \eqref{eq:DefSchoenYau},  \eqref{eq:SecFundFormGraph} and the assumptions $u - \rho^{-1} \in C^{k,\alpha}_0(M)$, $f \in C^{k+1,\alpha}_{\tau + 1}(M)$ and $K \in C^{k-1,\alpha}_\tau(M)$ together imply  $|u\eta^2q(\overline \nu)| = O(\rho^{2\tau-1}) = o(\rho^{n-1})$. In conclusion, we have the upper bound
    \begin{equation*}
        \delta \mathcal M
        \leq \int_M \bigg(\eta^2 u|d \theta|_{\overline g}^2 + \langle d(\eta^2), \theta du - u d\theta \rangle_{\overline g}\bigg) - \frac{2}{\kappa (n-1)}\bigg(\eta^2u |q|_{\overline g}^2 + uq(\overline \nabla \eta^2)\bigg)dV^{\overline g}.
    \end{equation*}
    We now let $\eta = e^{2\theta}$, then
    \begin{equation}
        \label{eq:ChosenEta}
        \delta \mathcal M 
        \leq 
        \int_M\bigg(ue^{4\theta}|d \theta|_{\overline g}^2 + 4e^{4\theta}\langle d\theta, \theta du - ud\theta \rangle_{\overline g}\bigg) - \frac{2ue^{4\theta}}{\kappa(n-1)}\bigg(|q|_{\overline g}^2 + 4q(\overline \nabla \theta)\bigg)dV^{\overline g}.
    \end{equation}
    In turn, the Cauchy-Schwarz inequality implies: $|4q(\overline \nabla \theta)| \leq |q|_{\overline g}^2 + 4|d\theta|_{\overline g}^2$, hence
    \begin{equation}
        \label{BoundI2}
        \int_M \frac{-2ue^{4\theta}}{\kappa(n-1)}\bigg(u |q|_{\overline g}^2 + 4q(\overline \nabla \theta)\bigg)dV^{\overline g}
        \leq \int_M \frac{2(n-2)}{n-1}ue^{4\theta}|d\theta|_{\overline g}^2 dV^{\overline g}.
    \end{equation}
    Moreover, we have
    \begin{equation}
        \label{eq:GradientTerms}
        4e^{4\theta}\langle d\theta, \theta du - ud\theta \rangle_{\overline g} 
        = 4\theta e^{4\theta} \langle d\theta, du \rangle_{\overline g} - 4u e^{4\theta}|d\theta|_{\overline g}^2.
    \end{equation}
    Combining \eqref{eq:ChosenEta}, \eqref{BoundI2} and \eqref{eq:GradientTerms} we obtain
    \begin{equation}
        \label{eq:UsedInRigidity}
        \delta \mathcal M \leq \int_M ue^{4\theta}|d\theta|_{\overline g}^2\underbrace{\left(1 - 4 + \frac{2(n-2)}{(n-1)}\right)}_{<0} dV^{\overline g} + \int_M 4\theta e^{4\theta} \langle d\theta, du \rangle_{\overline g} dV^{\overline g}.
    \end{equation}
    The first integral on the right hand side is non-positive. As for the second integral, we note that the function $\gamma(x) := xe^{4x} - e^{4x}/4 + 1/4$ satisfies
    \begin{equation*}
    \lim_{x \to -\infty} \gamma(x) = \frac{1}{4}, \quad \lim_{x \to \infty} \gamma(x) = \infty, \quad \gamma'(x) = 4xe^{4x}, \quad \gamma(0) = 0.
    \end{equation*}
    It is easy to check that this implies that $\gamma(x) \geq 0$ for all $x \in \mathbb R$ and that $\gamma(x) = O(x^2)$ for small $x$. Hence $\gamma(\theta) = O(\rho^{2\tau})$ and consequently, the divergence theorem implies
    \begin{equation}
        \label{eq:DummyRHS}
        \begin{aligned}
        \int_M 4\theta e^{4\theta}\langle d\theta, du\rangle_{\overline g} dV^{\overline g}
        & = \int_M\langle d(\gamma(\theta)), du\rangle_{\overline g} dV^{\overline g}
        - \int_M \gamma(\theta) (\Delta^{\overline g} u) dV^{\overline g} \\
        & = -n\int_M u\gamma (\theta) dV^{\overline g} \leq 0.
        \end{aligned}
    \end{equation}
    Again, there are no boundary terms appearing when applying the divergence theorem due to the estimate $|\gamma(\theta)du|_{\overline g} = O(\rho^{2\tau-1}) = o(\rho^{n-1})$. It follows that $\delta \mathcal M \leq 0$ and hence $\mathcal M_{\widehat g}(V_0) \leq \mathcal M_{\overline g}(V_0) = \mathcal M_{g}(V_0)$. An application of the standard positive mass theorem for asymptotically hyperbolic manifolds, see e.g. \cite[Theorem $1$]{ChruscielGalloway}, to $(M,\widehat g)$ gives $\mathcal M_{\widehat g}(V_0) \geq 0$ and hence $\mathcal M_g(V_0) \geq 0$ as well. 

    Next suppose that $E := \mathcal M_g(V_0)^2 < \sum_{i = 1}^n \mathcal M_g(V_i)^2$ and define the vector
    \begin{equation*}
        P := (\mathcal M_g(V_1), \dots, \mathcal M_g(V_n)),
    \end{equation*}
    so that $E^2 < |P|^2$. Since every Lorentz boost (see \cite[Section 2]{CortierDahlGicquaud} for details) $q: \mathbb H^n \to \mathbb H^n$ is an isometry, we find that for any chart at infinity $\Psi: M \setminus K_0 \to \mathbb H^n\setminus \overline B_{R_0}$, the map $\Psi' = q \circ \Psi$ is another a chart at infinity, after replacing $K_0$ with a larger compact set if necessary. If $q$ is a Lorentz boost by the angle $\eta$ in the direction $P$, then since the mass functional changes equivariantly under isometries of the background manifold $(\mathbb H^n,b)$ (see e.g. \cite{Michel}), we have
    \begin{equation*}
        \mathcal M'_g(V_0) = \frac{E - \eta |P|}{\sqrt{1 - \eta^2}},
    \end{equation*}
    where $\mathcal M'_g$ is the mass functional with respect to the chart $\Psi'$. In particular, choosing $\eta \in (|E|/|P|,1)$ gives $\mathcal M'_g(V_0) < 0$, a contradiction. Thus we have $\mathcal M_g(V_0)^2 \geq \sum_{i = 1}^n \mathcal M_g(V_i)^2$ as desired.

    Lastly suppose that $\mathcal M_g(V_0) = 0$, in which case we must have $\delta \mathcal M = 0$. By the calculation in $\eqref{eq:DummyRHS}$, the second term in the right hand side of \eqref{eq:UsedInRigidity} is non-positive and so
    \begin{equation*}
        0 = \delta \mathcal M \leq \int_M ue^{4\theta}|d\theta|_{\overline g}^2\underbrace{\left(1 - 4 + \frac{2(n-2)}{(n-1)}\right)}_{ < 0} dV^{\overline g} \leq 0.
    \end{equation*}
    This combined with $\theta \in C^{k,\alpha}_\tau(M)$ implies that $d\theta \equiv 0$ so that $\theta \equiv 0$ and $\Scal^{\overline g} \equiv -n(n-1)$, which combined with \cite[Theorem $3$]{HuangLeeMartin} implies that $(M,\overline g)$ is isometric to the hyperbolic space $(\mathbb H^n, b)$.

    The equality $\Scal^{\overline g} \equiv -n(n-1)$ combined with the generalized Schoen-Yau identity (see \eqref{eq:SchoenYauIdentity}) and the dominant energy condition implies
    \begin{equation}
    \label{eq:FromSYidentity}
     u|A - \overline K|_{\overline g}^2 \leq 2u(\mu - J(w)) + u|A - \overline K|_{\overline g}^2 = - 2u|q|_{\overline g}^2 + 2\div_{\overline g}(uq).
    \end{equation}
    Integrating over $\{|x| \leq \rho\}$ and letting $\rho \to 0$, we may argue as before to find
    \begin{align*}
        0\leq \int_M u|A - \overline K|_{\overline g}^2 dV^{\overline g} 
        &\leq \int_M - u|q|_{\overline g}^2 + \div_{\overline g}(uq) dV^{\overline g} \leq \int_{M} \div_{\overline g}(uq) dV^{\overline g} \\
        & = \lim_{\rho \to 0} \int_{\{|x| = \rho\}} uq(\overline \nu) d\mu^{\overline g} = 0.
    \end{align*}
    Since $u > 0$ it follows that $A = \overline K$. The calculations in \cite[Appendix B]{BrayKhuri} apply to any warped product, which implies that $(M,g)$ embeds into the warped product space-time $(\mathbb R \times \mathbb H^n, - u^2dt^2 + b)$ via the map $x \mapsto (f(x),x)$ with second fundamental form $K$. Since $u$ satisfies $\Delta^b u = nu$ and the asymptotic condition $u - V_0 \in C^{k,\alpha}_\tau(M)$ it follows that $u = V_0$ and so $(\mathbb R \times \mathbb H^n,- u^2dt^2 + b)$ is the anti-de Sitter spacetime. In summary, $(M,g)$ can be embedded as a space-like graphical slice of anti de-Sitter space-time with second fundamental form $K$. 
\end{proof}

\begin{remark}
    We do not expect that every asymptotically anti-de Sitter initial data set $(M,g,K)$ admits a globally defined solution $(f,u)$ of the system \eqref{eq:KeySystem}, as solutions of the generalized Jang equation may blow-up on marginally outer and inner trapped surfaces, see \Secref{sec_GeometricSolution}. Presumably, this issue can be dealt with by using the results Han and Khuri \cite{HanKhuri} and Yu \cite{YuBlowup} regarding the asymptotics of solutions near the boundary of their blow up sets (see also \autoref{rem:MOTSMITS}). We note that one is confronted with the same problem in the reduction argument proposed by Cha and Khuri in \cite{ChaKhuri18}. 
\end{remark}

\appendix


\section{Boundary gradient estimates.}
\label{appendix_appendixA}
\begin{proposition}
    \label{prop:BoundaryGrad}
    Let $U$ be a neighborhood of $\partial \Omega$, let  $\overline f, \underline f \in C^2(U\cap \Omega) \cap C^1(U \cap \overline \Omega)$ be such that
    \begin{align*}
        H_{g}(\overline f) - s\tr_{g}(K)(\overline f) - \epsilon \overline f & < 0, \\
        H_{g}(\underline f) - s\tr_{g}(K)(\underline f) - \epsilon\underline f & > 0,
    \end{align*}
    in $U\cap \Omega$ and $\underline f = \overline f = s\phi$ on $\partial \Omega$. If $f$ is a solution to \eqref{RegularJangS}-\eqref{RegularJangSBoundary} and satisfies
    \begin{equation*}
        \underline f \leq f \leq \overline f \quad \text{ on } \quad\partial U \cap \Omega,
    \end{equation*}
    then we have the bound
    \begin{equation*}
        \sup_{\partial \Omega}|df|_g \leq \max(|d\underline f|_g, |d\overline f|_g).
    \end{equation*}
\end{proposition}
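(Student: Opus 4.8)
The plan is to prove \autoref{prop:BoundaryGrad} in two stages. First I would propagate the one-sided comparisons $\underline f \le f \le \overline f$ from the inner boundary $\partial U \cap \Omega$ to all of $U \cap \overline\Omega$ by a comparison principle; then I would deduce the gradient bound on $\partial\Omega$ from the standard observation that $f$, $\underline f$ and $\overline f$ share the same derivatives tangent to $\partial\Omega$ (all equal to those of $s\phi$), so the one-sided bounds pin the normal derivative of $f$ between those of the two barriers.

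For the first stage, write the operator $Q(v) := \mathcal{J}(v) - \epsilon v$ in local coordinates as $Q(v) = a^{ij}(x, Dv)D_{ij}v + \tilde b(x, Dv) - \epsilon v$, where $a^{ij} = \bigl(g^{ij} - \tfrac{u^2 v^i v^j}{1+u^2|dv|_g^2}\bigr)\tfrac{u}{\sqrt{1+u^2|dv|_g^2}}$ is positive definite and $\tilde b$ is smooth. The relevant structural features are that $a^{ij}$ does not depend on $v$ itself and that the zeroth-order term $-\epsilon v$ is non-increasing in $v$ (with $\epsilon \ge 0$). Since $Q(\overline f) < 0 = Q(f)$ in $U \cap \Omega$ and $f \le \overline f$ on $\partial(U\cap\Omega)$ --- using $f = s\phi = \overline f$ on $\partial\Omega$, $f \le \overline f$ on $\partial U \cap \Omega$ by hypothesis, and that the remaining corner points lie in $\partial\Omega$ --- the comparison principle \cite[Theorem 10.1]{GilbargTrudinger} yields $f \le \overline f$ on $U \cap \overline\Omega$, and the symmetric argument gives $\underline f \le f$. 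One can also argue directly: at an interior minimum $p$ of $\overline f - f$ one has $D\overline f(p) = Df(p)$, hence $a^{ij}(p, D\overline f) = a^{ij}(p, Df)$, and $\Hess(\overline f - f)(p) \ge 0$, so $Q(\overline f)(p) - Q(f)(p) = a^{ij}D_{ij}(\overline f - f)(p) - \epsilon(\overline f - f)(p) \ge -\epsilon(\overline f - f)(p)$, which is positive if the minimum value is negative, contradicting $Q(\overline f)(p) < 0 = Q(f)(p)$; hence $\overline f - f \ge 0$ everywhere, and likewise $f - \underline f \ge 0$.

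For the second stage, fix $x \in \partial\Omega$ with inward unit normal $N$. Because $f - \underline f$ is $C^1$ up to the boundary, non-negative in $U \cap \Omega$ and vanishes on $\partial\Omega$, its gradient at $x$ is normal to $\partial\Omega$ and $\langle \nabla(f - \underline f)(x), N\rangle_g \ge 0$; similarly $\langle \nabla(\overline f - f)(x), N\rangle_g \ge 0$. Together with $f = \underline f = \overline f = s\phi$ on $\partial\Omega$, which forces $\nabla f(x)$, $\nabla\underline f(x)$, $\nabla\overline f(x)$ to have a common tangential component $\zeta$, the $g$-orthogonal decomposition $\nabla f(x) = \zeta + \langle \nabla f(x), N\rangle_g\, N$ gives
\begin{equation*}
    |\nabla f(x)|_g^2 = |\zeta|_g^2 + \langle \nabla f(x), N\rangle_g^2 \le |\zeta|_g^2 + \max\bigl(\langle \nabla\underline f(x), N\rangle_g^2,\, \langle \nabla\overline f(x), N\rangle_g^2\bigr) = \max\bigl(|\nabla\underline f(x)|_g^2,\, |\nabla\overline f(x)|_g^2\bigr).
\end{equation*}
Taking square roots and the supremum over $x \in \partial\Omega$ yields $\sup_{\partial\Omega}|df|_g \le \max(|d\underline f|_g, |d\overline f|_g)$.

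I do not expect a real obstacle here: the content is entirely the comparison step, and the only points that need care are verifying the structural hypotheses of \cite[Theorem 10.1]{GilbargTrudinger} (especially that $a^{ij}$ is independent of the unknown function, which is what makes the second-order terms comparable at an interior touching point) and checking $\partial(U\cap\Omega) \subset \partial\Omega \cup (\partial U \cap \Omega)$, so that the boundary ordering $\underline f \le f \le \overline f$ is available precisely where the comparison principle uses it.
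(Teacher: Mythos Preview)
Your proposal is correct and follows essentially the same approach as the paper: both establish $\underline f \le f \le \overline f$ on $U\cap\overline\Omega$ via the interior-minimum argument (your direct argument at a touching point is exactly the paper's computation), and both then extract the boundary gradient bound from the fact that the tangential derivatives agree while the normal derivative of $f$ is sandwiched between those of the barriers. The paper phrases the second step via difference quotients whereas you use the orthogonal decomposition explicitly, but this is a cosmetic difference.
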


\begin{proof}
    Since $\overline{\Omega \cap U}$ is compact, $\overline f - f$ attains its minimum at some point $p$ in $\overline{\Omega \cap U}$. If $p \in \partial (\Omega \cap U) = \partial \Omega \cup (\partial U \cap \Omega)$, then in the view of
    \begin{equation*}
        f = \overline f, \text{ on $\partial \Omega$} \quad \text{ and } \quad
        f \leq \overline f, \text{ on $\partial U \cap \Omega$},
    \end{equation*}
    it follows that $f(p) \leq \overline f(p)$. If $p$ lies in the interior of $\Omega \cap U$, then at $p$
    \begin{equation*}
        \nabla(\overline f - f) = 0, \quad \Hess(\overline f - f) \text{ is non-negative definite at $p$}.
    \end{equation*}
    Hence
    \begin{align*}
        0 & > H_g(\overline f) - s\tr_g(K)(\overline f) - \epsilon \overline f - H_g(f) + s\tr_g(K)(f) + \epsilon f \\
        & = \underbrace{\left(g^{ij} - \frac{u^2f^if^j}{1 + u^2|df|_g^2}\right)\left(\frac{u\Hess_{ij} \overline f - u\Hess_{ij}f}{\sqrt{1 + u^2|df|_g^2}}\right)}_{\geq 0} + \underbrace{s\left(g^{ij} - \frac{u^2f^if^j}{1 + u^2|df|_g^2}\right)(K_{ij} - K_{ij})}_{ = 0} \\
        & + \epsilon(f - \overline f) \geq \epsilon(f - \overline f),
    \end{align*}
    at $p$. It follows that: $\overline f(p) \geq f(p)$. Hence $\overline f \geq f$ in $U \cap \Omega$ and by a similar argument $\underline f \leq f$ in $U \cap \Omega$. Since $\underline f(p_0) = f(p_0) = \overline f(p_0)$ for all $p_0 \in \partial \Omega$ it follows that for all $p \in U \cap \Omega$
    \begin{equation*}
        \frac{\underline f(p) - \underline f(p_0)}{d(p_0,p)} \leq \frac{f(p) - f(p_0)}{d(p_0,p)} \leq \frac{\overline f(p) - \overline f(p_0)}{d(p_0,p)}.
    \end{equation*}
    Passing to local coordinates and comparing partial derivatives we get the desired bound. 
\end{proof}

\newpage
\section{Results on convergence of graphs of bounded mean curvature}
\label{appendix_LimitManifold}
The aim of this appendix it to prove the following proposition.\\

\begin{restatable}[Limits of graphs in warped product spaces]{proposition}{LimitsSubmanifolds}
    \label{prop:LimitSubmanifolds}
    Let $(M^n,g)$ be a Riemannian manifold with $g \in C^{2,\alpha}_\text{loc}(M)$, $3 \leq n \leq 7$, $\alpha \in (0,1)$ and let $\widetilde M := (M\times \mathbb R, \widetilde g := g + u^2dt^2)$ be a warped product space where $u \in C_\text{loc}^{2,\alpha}(M)$ is positive. 

    Suppose that $\Omega \subset M$ is open, $f_m: \Omega \to \mathbb R$ is a sequence of functions in $C_\text{loc}^{3,\alpha}(\Omega)$ such that the mean curvature of each graph $\Gamma(f_m) := \{(x,f_m(x)), x \in \Omega\}$ computed as the tangential divergence of the downward pointing unit normal $\nu_m$ is equal to $H_{\Gamma(f_m)}(x,t) = F_m(x,t,\nu_m)$. Moreover, suppose that the functions $F_m: \widetilde M \times T\widetilde M \to \mathbb R$ are in $C^{1,\alpha}_{\text{loc}}(\widetilde M \times T\widetilde M)$ and satisfy $|F_m|_{C^1(\widetilde M \times T\widetilde M)} \leq C$, for some constant $C > 0$ and that there is a function $F \in C^{1,\alpha}_\text{loc}(\widetilde M \times T\widetilde M)$ such that $F_m \to F$ in $ C^{1,\alpha}_{loc}(\widetilde M \times T\widetilde M)$ as $m \to \infty$.
    
    Then there exists a subsequence of $\{f_m\}_{m = 1}^\infty$, denoted using the same subscript $m$, and a $C^{3,\alpha}$ submanifold $\Gamma \subset \widetilde M$ given by $\Gamma = \partial E$ for an open set $E \subset \Omega \times \mathbb R$, such that the manifolds $\Gamma(f_m)$ converge to $\Gamma$ in $C_\text{loc}^{3,\alpha}$ as graphs over the tangent spaces of $\Gamma$. Letting $E_m := \{(x,y): x \in \Omega, y > f_m(x)\}$ we have $\chi_{E_m} \to \chi_{E}$ in $\text{BV}_{\text{loc}}(\Omega \times \mathbb R)$. Moreover, the mean curvature of $\Gamma$ with respect to the normal pointing out of $E$ is equal to $H_\Gamma(p) = F(p,\nu)$. 
\end{restatable}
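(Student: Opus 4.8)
The plan is to realise the subgraphs $E_m := \{(x,t)\in\Omega\times\mathbb R : t > f_m(x)\}$, whose topological boundaries are the $\Gamma(f_m)$, as sets of \emph{uniformly almost-minimal perimeter} in $(\widetilde M,\widetilde g)$, and then to invoke the standard compactness and partial-regularity theory for such sets, transcribing the geometric-measure-theory arguments of Eichmair \cite[Appendix~A]{EichmairPlateau} (see also \cite[Section~3]{EichmairReduction}) to the warped metric $\widetilde g = g + u^2dt^2$. First I would record the uniform mean curvature bound: since $\nu_m$ is $\widetilde g$-unit, $|H_{\Gamma(f_m)}| = |F_m(\cdot,\cdot,\nu_m)| \le |F_m|_{C^0} \le C$ for all $m$. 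The crucial — and least routine — point is that the warped graphical area functional $\mathcal A[v] := \int_\Omega\sqrt{1+u^2|\nabla v|_g^2}\,d\mu_g$ is \emph{convex} in $v$, while a computation with \eqref{eq:MeanCurvatureGraph} and \eqref{MeanCurvatureFormula} shows that $f_m$ is a weak critical point of $v\mapsto\mathcal A[v]-\int_\Omega\psi_m\,v\,d\mu_g$ for a function $\psi_m$ bounded uniformly in $L^\infty$ on compact sets (here one uses the mean curvature bound together with \autoref{lem:WarpingBounds} and the local positivity of $u$ to absorb the lower-order terms). Convexity then forces $f_m$ to be an actual minimiser of this functional among competitors with the same Dirichlet data; combining this with the fact that monotone rearrangement in the $t$-direction does not increase the $\widetilde g$-perimeter (because $u$ is independent of $t$, so this is a Steiner-type symmetrisation), one deduces that $E_m$ is a $(\Lambda_0,r_0)$-minimiser of the $\widetilde g$-perimeter with $\Lambda_0,r_0$ independent of $m$.

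Given this, a subsequence of $\{E_m\}$ converges in $L^1_{\mathrm{loc}}(\Omega\times\mathbb R)$ — hence in $\mathrm{BV}_{\mathrm{loc}}$ — to a set $E$ which is again a $(\Lambda_0,r_0)$-minimiser, with convergence of the associated perimeter measures, local Hausdorff convergence of the boundaries, and $C^{1,\gamma}$ graphical convergence $\Gamma(f_m)\to\partial E$ over the regular part of $\partial E$; the vertical monotonicity ``$\chi_{E_m}(x,\cdot)$ non-decreasing in $t$'' passes to $\chi_E$, so $E = \{t > f(x)\}$ for some $f\colon\Omega\to[-\infty,\infty]$. I would then prove regularity of $\Gamma := \partial E$ from the regularity theory for almost-minimisers — which gives $\partial^*E$ of class $C^{1,\gamma}$ and $\dim_{\mathcal H}(\partial E\setminus\partial^*E)\le n-7$ — together with a dichotomy forced by the subgraph structure: at a point where $f$ is locally finite, any tangent cone to $\partial E$ is simultaneously area-minimising and a graph over a hyperplane, hence a hyperplane by the Bernstein theorem for minimal graphs, so no such point is singular, in any dimension; the remaining candidate singular points lie on vertical walls $\partial^\pm U\times\mathbb R$ over the almost-minimal hypersurfaces $\partial^\pm U\subset M^n$, whose singular sets have dimension $\le(n-1)-7<0$ and are therefore empty — and it is precisely here that $3\le n\le 7$ is used. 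Consequently $\Gamma$ is a $C^{1,\gamma}$ hypersurface everywhere. Passing to the limit in $H_{\Gamma(f_m)}=F_m(\cdot,\cdot,\nu_m)$ using the $C^{1,\gamma}$ convergence and $F_m\to F$ in $C^{1,\alpha}_{\mathrm{loc}}$ identifies the mean curvature of $\Gamma$ as $F(\cdot,\nu)$ weakly, hence classically, and a Schauder bootstrap for the resulting quasilinear prescribed-mean-curvature equation (with $C^{1,\alpha}$ right-hand side, cf.\ \cite[Chapter~6]{GilbargTrudinger}) upgrades $\Gamma$ to $C^{3,\alpha}$ and the graphical convergence to $C^{3,\alpha}_{\mathrm{loc}}$ as graphs over the tangent spaces of $\Gamma$.

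I expect the main obstacle to be the second half of the first paragraph: making the ``frozen-functional plus vertical-rearrangement'' argument fully rigorous in the warped-product setting — in particular, checking that the $t$-monotone rearrangement genuinely does not increase the $u$-weighted perimeter, and that the rearranged subgraph competitor agrees with $f_m$ outside the relevant ball and differs from $E_m$ in volume by at most $C|B_r|$, so that the resulting deficit is subcritical. A secondary point that needs care, but is essentially bookkeeping once the almost-minimality and the subgraph structure of $E$ are in hand, is the regularity dichotomy of the second paragraph — cleanly splitting $\partial E$ into its graphical part and its vertical walls and applying the Bernstein and dimension-reduction results — together with verifying that the warped ambient geometry, being uniformly controlled on compact subsets of $\widetilde M$, does not affect any of the geometric-measure-theory estimates adapted from \cite[Appendix~A]{EichmairPlateau}.
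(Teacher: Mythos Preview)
Your overall architecture --- uniform almost-minimality, BV/current compactness, partial regularity, Schauder bootstrap --- matches the paper's. The substantive difference is in how almost-minimality is established. The paper does not use convexity of the graphical area functional or vertical rearrangement at all: instead (Lemma~\ref{lem:CMinExistence}) it exploits that $\partial_t$ is a Killing field of $\widetilde g$ to extend the downward unit normal $\nu$ of $\Gamma(f_m)$ to all of $\Omega\times\mathbb R$ by vertical translation, sets $\sigma := dV^{\widetilde g}\mres\nu$, checks that $|\sigma|\le 1$ on unit simple $n$-vectors while $d\sigma = H\,dV^{\widetilde g}$ with $|H|\le C$, and reads off the $C$-almost-minimising inequality \eqref{CalmostIneq} directly by pairing competitors against $\sigma$. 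This calibration argument bypasses your ``frozen-functional plus vertical-rearrangement'' step entirely, and with it the warped Steiner inequality that you correctly flag as the main obstacle. Your route is plausible --- the $t$-independence of $u$ does make a fibrewise rearrangement inequality believable --- but the paper's is both shorter and more robust: it needs only that $\partial_t$ is Killing and that $|H|$ is bounded, never compares $E_m$ to a graph minimisation problem, and (after a Nash embedding of $(\widetilde M,\widetilde g)$ into some $\mathbb R^{n+l}$, which you do not mention but which is how the paper accesses the Euclidean current framework of \cite[Appendix~A]{EichmairPlateau}) lands directly in that setting.

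One genuine error in your regularity discussion: the claim that at graphical points the Bernstein theorem rules out singularities ``in any dimension'' is false. Entire minimal graphs are automatically area-minimising (by exactly the calibration above with $H=0$), so ``area-minimising graph'' and ``minimal graph'' coincide, and the Bernstein theorem for entire minimal graphs in $\mathbb R^{n+1}$ fails for $n\ge 8$ (Bombieri--De Giorgi--Giusti). The restriction $n\le 7$ therefore already enters at graphical tangent cones, not only at the vertical walls. The paper does not attempt your graphical/cylindrical dichotomy at this stage at all: it simply invokes the general almost-minimiser regularity (singular set of Hausdorff dimension $\le n-7$, hence at most isolated points when $n=7$) and, for $n=7$, rules out isolated singularities by a blow-up argument deferred to \cite[Remark~4.1]{EichmairPlateau}.
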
 

We will prove \autoref{prop:LimitSubmanifolds} using methods from geometric measure theory. The reader is referred to Simon \cite{SimonGMT} for the general theory of currents, varifolds and sets of locally finite perimeter. In particular, we will be making use of the compactness and regularity theory of so called $C$-almost minimizing currents as presented by Eichmair in
\cite[Appendix A]{EichmairPlateau}, see also Duzaar and Steffen \cite{DuzaarSteffen}. Adapting \cite[Definition A.$1$]{EichmairPlateau} to our setting, we make the following definition, using the notations of \cite[Chapter $7$, Section $5$]{SimonGMT}, see also \cite[Section $37$]{SimonGMTOld}.\\

\begin{definition}
    \label{CalmostMinimizer}
    Suppose that $n \geq 1$ and $l \geq 1$ are integers, $U \subset \mathbb R^{n+l}$ is an open set, $N \subset \mathbb R^{n+l}$ is an oriented and embedded $(n+1)$-dimensional $C^2$ submanifold of $\mathbb R^{n+l}$ and $T \in D_n(U)$ is an integer multiplicity current with $\text{spt}(T) \subset N \cap U$. Assume further that $T = \partial \LB E\RB$ for some $\mathcal H^{n+1}$ measurable subset $E \subset N \cap U$ of locally finite perimeter in $N$. 

    We say that $T$ is \emph{$C$-almost minimizing in $N \cap U$} if for any open set $W \ssubset U$ and any integer multiplicity current $X \in D_{n+1}(U)$ compactly supported in $N \cap W$ we have:
    \begin{equation}
        \label{CalmostIneq}
        M_W(T) \leq M_W(T + \partial X) + CM_W(X).
    \end{equation}
\end{definition} 

In the next lemma we show that a graph $\Gamma = \{(x,f(x)), x \in \Omega\} \subset (M \times \mathbb R, h)$ of bounded mean curvature $|H_\Gamma| \leq C$ has the $C$-almost minimizing property whenever the coordinate vector field $\partial_t$ along the $\mathbb R$ factor is a Killing vector field of the metric $h$. This holds in particular for the aforementioned warped product metric $h = g + u^2dt^2$, but also for metrics of the form:
\begin{equation*}
    h = g + w \otimes dt + dt \otimes w + u^2 dt^2,
\end{equation*}
where $u$ is a function and $w$ is a $1$-form on $M$, that are relevant in the context of reduction arguments for the mass-angular momentum and the mass-angular momentum-charge inequalities, see Cha and Khuri \cite{ChaKhuri15} and \cite{ChaKhuri18} and Cha, Khuri and Sakovich \cite{ChaKhuriSakovich}.\\

\begin{lemma}
    \label{lem:CMinExistence}
    Let $(M\times \mathbb R,h)$ be an oriented Riemannian manifold with $h \in C^{2}_\text{loc}(M\times \mathbb R)$ and let $t$ be the coordinate on $\mathbb R$. Suppose that the vector field $\partial_t$ is a Killing vector field for the metric $h$, that $\Omega \subset M$ is an open set and that $f: \Omega \to \mathbb R$ is a $C^2$ function whose graph $\Gamma(f) = \{(x,f(x)): x \in \Omega\} \subset M\times \mathbb R$ has uniformly bounded mean curvature $|H_{\Gamma(f)}| \leq C$.

    For any $C^2$ isometry $\psi: (M \times \mathbb R,h) \hookrightarrow (\mathbb R^{n+l},\delta)$ and any open set $U \subset \mathbb R^{n+l}$ such that $\psi(\Omega \times \mathbb R) = \psi(M \times \mathbb R) \cap U$, the current $\LB\psi(\Gamma(f))\RB \in D_n(\mathbb R^{n+l})$ is $C$-almost minimizing in $\psi(M \times \mathbb R) \cap U$.
\end{lemma}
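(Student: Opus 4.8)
The plan is to produce a globally defined unit-comass $n$-form on $\Omega\times\mathbb R$ that calibrates $\Gamma(f)$ and whose exterior derivative has comass at most $C$, and then to run the standard comparison argument showing that a current bounding a region whose boundary has mean curvature bounded by $C$ is $C$-almost minimizing. The Killing hypothesis enters precisely in the construction of this form. Since $\partial_t$ is Killing for $h$, its flow $\phi_s(x,t):=(x,t+s)$ consists of isometries of $(M\times\mathbb R,h)$ and satisfies $\phi_s(\Gamma(f))=\Gamma(f+s)$; as $s$ ranges over $\mathbb R$ the graphs $\Gamma(f+s)$ foliate $\Omega\times\mathbb R$, the leaf through $(x,t)$ being $\Gamma(f+(t-f(x)))$. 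Let $\Xi$ be the field of unit normals of this foliation, oriented so that along $\Gamma(f)$ it is the outward unit normal of $E_0:=\{(x,t):x\in\Omega,\ t>f(x)\}$; equivalently, along $\Gamma(f)$ one has $\Xi=\nu$ with $\nu$ as in \eqref{eq:UnitNormalGraph}. I would first observe that $\Xi$ is $\partial_t$-invariant, so its coordinate components depend only on $x$: indeed $\phi_{s*}$ carries the unit normal of $\Gamma(f)$ at $(x,f(x))$ to that of $\Gamma(f+s)$ at $(x,f(x)+s)$, and the metric components $h_{ab}$ do not depend on $t$ (for the warped product metric, $\Xi$ is simply the $\partial_t$-invariant extension of the right-hand side of \eqref{eq:UnitNormalGraph}, which involves only $f^i$, $|df|_g^2$ and $u$). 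In particular $\Xi$ is of class $C^1$ on $\Omega\times\mathbb R$ with $|\Xi|_h\equiv 1$.

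The next step is the bound $|\div_h\Xi|\le C$ on all of $\Omega\times\mathbb R$. Along a leaf $L$ one has $\div_h\Xi=\tr(\nabla\Xi)=H_L+\langle\nabla_\Xi\Xi,\Xi\rangle_h$, where $H_L$ is the mean curvature of $L$ with respect to $\Xi$ (the trace of the second fundamental form $X\mapsto\langle\nabla_X\Xi,\cdot\rangle_h$), and $\langle\nabla_\Xi\Xi,\Xi\rangle_h=\tfrac12\Xi(|\Xi|_h^2)=0$. Since $\phi_s$ is an isometry carrying $(\Gamma(f),\nu)$ to $(\Gamma(f+s),\Xi|_{\Gamma(f+s)})$ and mean curvature is an isometry invariant, $|H_L|=|H_{\Gamma(f)}|\le C$ for every leaf, hence $|\div_h\Xi|\le C$ everywhere. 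Setting $\omega:=\iota_\Xi\,dV_h$ we get an $n$-form on $\Omega\times\mathbb R$ of comass $\le 1$, with $d\omega=\mathcal L_\Xi dV_h=(\div_h\Xi)\,dV_h$ of comass $\le C$, and with $\langle\omega,\vec T_0\rangle\equiv 1$ on $\Gamma(f)$ where $T_0:=\partial\LB E_0\cap(\Omega\times\mathbb R)\RB$, the last assertion because $\Xi$ is exactly the outward unit normal of $E_0$ along $\Gamma(f)$.

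Finally I would transport everything by the isometry $\psi$: put $E:=\psi\big(E_0\cap(\Omega\times\mathbb R)\big)\subset N\cap U$, so $\LB\psi(\Gamma(f))\RB=\partial\LB E\RB=:T$ ($E$ is $\mathcal H^{n+1}$-measurable of locally finite perimeter since $E_0$ is a $C^2$ subgraph), and let $\tilde\omega:=(\psi^{-1})^*\omega$, extended arbitrarily smoothly to $U$ (this does not affect pairings with currents supported in $N$, and commutes with $d$ on $N$). Then $\tilde\omega$ has comass $\le 1$, $d\tilde\omega$ has comass $\le C$ on $N$, and $\tilde\omega$ calibrates $\psi(\Gamma(f))$. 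Given $W\ssubset U$ and integer multiplicity $X\in D_{n+1}(U)$ compactly supported in $N\cap W$, set $K:=\text{spt}(X)$; then $T$ and $T+\partial X$ agree off $K$. Pick an open $W'$ with $K\subset W'\ssubset W$ and $\|T\|(\partial W')=0$, and $\zeta\in C_c^\infty(W)$ with $0\le\zeta\le 1$ and $\zeta\equiv 1$ on $\overline{W'}\supset K$. Using the calibration identity on $T$, the equalities $\|T+\partial X\|=\|T\|$ off $W'$ and $\|T+\partial X\|(\partial W')=0$, and $\partial X(\zeta\tilde\omega)=X(d(\zeta\tilde\omega))=X(d\tilde\omega)$ (since $d\zeta$ vanishes on $K$), one gets
\[
M_{W'}(T)\le T(\zeta\tilde\omega)=(T+\partial X)(\zeta\tilde\omega)-\partial X(\zeta\tilde\omega)\le M_{W'}(T+\partial X)+\int_{\text{spt}(\zeta)\setminus\overline{W'}}d\|T\|+C\,M(X).
\]
Letting $\text{spt}(\zeta)\downarrow\overline{W'}$ annihilates the middle term, and adding $M_{W\setminus\overline{W'}}(T)=M_{W\setminus\overline{W'}}(T+\partial X)$ yields $M_W(T)\le M_W(T+\partial X)+C\,M_W(X)$, which is \eqref{CalmostIneq}.

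The main obstacle I anticipate is the construction and verification of the calibrating form: checking that the translated graphs genuinely foliate $\Omega\times\mathbb R$ with a $C^1$ normal field, and establishing the uniform divergence bound $|\div_h\Xi|\le C$. This is exactly where the Killing symmetry is indispensable — without it the leaves need not be isometric and $\div_h\Xi$ could be unbounded. Once a form of comass $\le 1$ with $d$-comass $\le C$ calibrating $\Gamma(f)$ is available, the concluding current-theoretic comparison is routine and is essentially the proof (as in Duzaar–Steffen and Eichmair \cite{EichmairPlateau}) that bounded mean curvature implies the $C$-almost minimizing property.
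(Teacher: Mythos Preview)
Your proposal is correct and follows essentially the same approach as the paper: both extend the unit normal of $\Gamma(f)$ to a $\partial_t$-invariant unit vector field on $\Omega\times\mathbb R$ via the Killing flow, form the calibrating $n$-form $\omega=\iota_\Xi dV_h$ (the paper writes $\sigma=dV^h\mres\nu$), use isometry invariance to get $|d\omega|=|\div_h\Xi|\,dV_h=|H|\,dV_h\le C\,dV_h$, and then run the standard Duzaar--Steffen/Eichmair comparison to obtain \eqref{CalmostIneq}. Your final mass-comparison step is written out with slightly more care (the cutoff $\zeta$ and the choice of $W'$ with $\|T\|(\partial W')=0$), but the underlying argument is identical.
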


\begin{proof}
    We begin by defining a one parameter family of maps $\varphi: (M \times \mathbb R) \times \mathbb R \to M \times \mathbb R$ by
    \begin{equation*}
        \varphi(x,t,s) = (x,t+s) \quad \text{for $x \in M$ and $t,s \in \mathbb R$}.
    \end{equation*}
    The map $\varphi$ is the flow generated by the vector field $\partial_t$ and since $\partial_t$ is a Killing vector field of $h$ we conclude that each map $\varphi_s(x,t) = \varphi(x,t,s)$ is an isometry of $(M\times \mathbb R,h)$. Letting $\nu$ be the downward pointing unit normal to $\Gamma(f) \subset \Omega \times \mathbb R$, we extend it to vector field on $\Omega \times \mathbb R$ defined by
    \begin{equation}
        \label{eq:extensionNu}
        \nu(x,t+s) = d\varphi_s(\nu(x,t)) \quad \text{for all $x \in \Omega$ and $t,s \in \mathbb R$}. 
    \end{equation}
    This extension implies that $\nu(x,f(x) + s)$ is the downward pointing unit normal of the vertically shifted graph $\Gamma(f + s)$. To see this, let $\gamma: (-\epsilon, \epsilon) \to M \times \mathbb R$ be a curve satisfying $\gamma(\tau) \in \Gamma(f+s)$ for all $\tau \in (-\epsilon, \epsilon)$. We want to show that $\langle \gamma'(0),\nu \rangle_h = 0$. By definition we have
    \begin{equation*}
        \varphi_s^{-1}(\gamma(\tau)) \in \Gamma(f) \quad \text{for all $\tau \in (-\epsilon,\epsilon)$}. 
    \end{equation*}
    Moreover, we have
    \begin{align*}
        \langle \gamma'(0), \nu(x,f(x) + s) \rangle_h 
        & = \langle d\varphi_s^{-1}(\gamma'(0)), d\varphi_s^{-1}(\nu(x,f(x) + s)) \rangle_h \\
        & = \langle (\varphi_s^{-1} \circ \gamma)'(0), \nu(x,f(x)) \rangle_{h} \\
        & = 0,
    \end{align*}
    where we used the fact that $\varphi_s$ is an isometry in the first line and \eqref{eq:extensionNu} and the chain rule in the second line. The last line follows from the fact that $\nu$ is normal to $\Gamma(f)$ and that $(\varphi_s^{-1} \circ \gamma)'(0) \in T_{(x,f(x))}\Gamma(f)$. A similar computation shows that
    \begin{equation*}
        \langle \partial_t, \nu\rangle_h\bigg|_{(x,f(x) + s)} = \langle \partial_t, \nu\rangle_h\bigg|_{(x,f(x))} < 0 \quad \text{for all $x\in \Omega$ and $s \in \mathbb R$},
    \end{equation*}
    which shows that $\nu(x,t)$ always points downwards. Furthermore, we note that the mean curvature of each shifted graph $\Gamma(f + s)$ is the same as that of $\Gamma(f)$, namely
    \begin{equation*}
        H_{\Gamma(f + s)}(x,f(x)+s) = H_{\Gamma(f)}(x,f(x)) \quad \text{for all $x\in \Omega$ and $s \in \mathbb R$}. 
    \end{equation*}
    To see this, fix $x \in \Omega$ and let $\{e_1,\dots, e_n\}$ be an orthonormal frame of $\Gamma(f)$ at $(x,f(x)) \in \Gamma(f)$. We extend each $e_i$ to $\{x\} \times \mathbb R$ by setting
    \begin{equation}
        \label{eq:ExtensionFrame}
        e_i(x,t+s) = d\varphi_s(e_i(x,t)) \quad \text{for all $t,s \in \mathbb R$ and $i \in \{1,\dots, n\}$}
    \end{equation}
    and we set $e_0 := \nu$. Denoting $p := (x,f(x))$ and $p_s := (x,f(x) + s)$ in order to simplify notation, we obtain
    \begin{align*}
        H_{\Gamma(f+s)}(p_s)
        & = \sum_{i = 1}^n \langle e_i(p_s),(\nabla^h_{e_i(p_s)}\nu)(p_s) \rangle_h && \text{by definition}\\
        & = \sum_{i = 1}^n \langle d\varphi_s(e_i(p)),(\nabla^h_{d\varphi_s(e_i(p))}d\varphi_s(\nu))(p_s) \rangle_h && \text{by \eqref{eq:extensionNu} and \eqref{eq:ExtensionFrame}} \\
        & = \sum_{i = 1}^n \langle d\varphi_s(e_i(p)),d\varphi_s(\nabla^h_{e_i(p)}\nu)(p_s) \rangle_h && \parbox{15em}{since $\varphi_s$ is an isometry}\\
        & = \sum_{i = 1}^n \langle e_i(p),(\nabla^{h}_{e_i(p)}\nu)(p) \rangle_h && \parbox{15em}{since $\varphi_s$ is an isometry} \\
        & = H_{\Gamma(f)}(p) && \text{by definition.}
    \end{align*}
    We can thus extend $H_{\Gamma(f)}$ to a function $H: \Omega \times \mathbb R \to \mathbb R$ such that
    \begin{equation*}
        H(x,f(x) + s) = H_{\Gamma(f + s)}(x,f(x) + s) = H_{\Gamma(f)}(x,f(x)) \quad \text{for $x \in \Omega$ and $s \in \mathbb R$}. 
    \end{equation*}
    The proof of the fact that $\LB \psi(\Gamma(f))\RB$ has the $C$-almost minimizing property is very similar to that of Eichmair \cite[Example A.$1$]{EichmairPlateau}. We define the $n$-form $\sigma := dV^h \mres \nu := dV^h(\nu, \cdot, \dots, \cdot)$. For all $s \in \mathbb R$, $\sigma$ is the Riemannian volume form of $\Gamma(f + s)$ and consequently on $\Omega \times \mathbb R$ we have
    \begin{align}
        \label{eq:Sigma1}
        |\sigma(v_1, \dots, v_n)| & \leq 1 && \parbox{15em}{for any orthonormal collection of \\ vectorfields $\{v_1,\dots, v_n\}$ on $\Omega \times \mathbb R$,} \\
        \label{eq:Sigma2}
        d\sigma = (\div \nu)dV^h
        & = H dV^h && \text{on $\Omega \times \mathbb R$}
    \end{align}
    We now set $N := \psi(M \times \mathbb R)$ and let $U \subset \mathbb R^{n+l}$ be any open set such that $\psi(\Omega \times \mathbb R) = \psi(M\times \mathbb R) \cap U$. In this way $N \subset \mathbb R^{n+l}$ is an oriented and embedded $C^2$ submanifold and letting $E := \{(x,y) \in \Omega \times \mathbb R: y > f(x)\}$ it follows that $\psi(E) \subset N \cap U$ is $\mathcal H^{n+1}$-measurable and that $\psi(E)$ has locally finite perimeter in $N$. Setting
    \begin{equation*}
        T := \LB\psi(\Gamma(f))\RB = \LB\partial\psi(E) \RB = \partial\LB \psi(E) \RB, 
    \end{equation*}
    we see that $T \in D_n(U)$ is of integer multiplicity and that $\text{spt}(T) \subset N\cap U$. 

    Now let $W$ and $X$ be as in \autoref{CalmostMinimizer}, so that $W \ssubset U$ is a compactly contained open set and $X \in D_{n+1}(U)$ is of integer multiplicity and has compact support in $N \cap W$. We denote by $D(W,X)$ the collection of $C^2$ functions on $U$ with compact support in $W$ that are identically equal to $1$ in a neighborhood of $\text{spt}(X)$. Then we have
    \begin{align*}
        M_W(\partial\LB\psi(E)\RB + \partial X) 
        & = \sup_{\substack{w \in D^n(W) \\ |w| \leq 1}} (\partial \LB\psi(E)\RB + \partial X)(w) 
        \geq \sup_{\substack{\phi \in D(W,X) \\ |\phi| \leq 1}} (\partial \LB\psi(E)\RB + \partial X)(\phi \psi_*\sigma) \\
        & \geq \sup_{\substack{\phi \in D(W,X) \\ |\phi| \leq 1}} \partial \LB\psi(E)\RB(\phi \psi_*\sigma) - \sup_{\substack{\phi \in D(W,X) \\ |\phi| \leq 1}}(-\partial X)(\phi \psi_*\sigma).
    \end{align*} 
    As for the first term in the last line, we note that
    \begin{align*}
        \sup_{\substack{\phi \in D(W,X) \\ |\phi| \leq 1}} \partial \LB\psi(E)\RB(\phi \cdot \psi_*\sigma) 
        = \sup_{\substack{\phi \in D(W,X) \\ |\phi| \leq 1}} \int_{\partial\psi(E)} \phi\psi_*\sigma 
        = \int_{\partial\psi(E)} \psi_*\sigma = M_W(\partial [\psi(E)]).
    \end{align*}
    Furthermore, since $\phi \equiv 1$ in a neighborhood of $\text{spt}(X)$ and $d\sigma = HdV^h$, the second term can be estimated as follows
    \begin{align*}
        \sup_{\substack{\phi \in D^n(W,X) \\ |\phi| \leq 1}}(-\partial X)(\phi\psi_*\sigma) 
        = \sup_{\substack{\phi \in D^n(W,X) \\ |\phi| \leq 1}} (-X)(\phi\psi_*d\sigma) 
        \leq C\sup_{\substack{\phi \in D^n(W,X) \\ |\phi| \leq 1}} |X(\phi\psi_*dV^g)|
        \leq C M_W(X).
    \end{align*}
    In summary, we have shown that $M_W(\partial \LB\psi(E)\RB + \partial X) \geq M_W(\partial \LB\psi(E)\RB) - C M_W(X)$, as claimed.
\end{proof}
We conclude by outlining the proof of \autoref{prop:LimitSubmanifolds}, for which we will heavily rely on the results of \cite[Appendix A]{EichmairPlateau}. These results are formulated for currents of codimension $1$ in $\mathbb R^{n+1}$ however as explained in \cite[Remark A$.3$]{EichmairPlateau}, they carry over to Riemannian manifolds by isometrically embedding them into a higher dimensional Euclidean space. This is very similar to the related theory for area minimizing currents in \cite[Chapter $7$, Section $5$]{SimonGMT}, see also \cite[Section $37$]{SimonGMTOld}.

\begin{proof}[Proof of \autoref{prop:LimitSubmanifolds}]
    By the Nash embedding theorem, see \cite[Chapter $3.1$, Imbedding Theorem]{Gromov86} and \cite[Section $1.2$, footnote $7$]{GromovOverview}, there is a $C^{2,\alpha}$ isometry $\psi: M\times_u \mathbb R \to \mathbb R^{n+l}$ and we may choose an open set $U \subset \mathbb R^{n+l}$ such that $\psi(\Omega \times \mathbb R) = \psi(M \times \mathbb R) \cap U$. Recalling the notation $\widetilde M = M\times \mathbb R$, we define 
    \begin{equation}
        \label{eq:CminSetup}
        N := \psi(\widetilde M), \quad E_i := \{(x,y): y > f_i(x), x \in \Omega\} \subset \widetilde M, \quad \overline E_i := \psi(E_i) \subset N \subset \mathbb R^{n+l}.
    \end{equation}
    \autoref{lem:CMinExistence} implies that the currents $\partial \LB \overline E_i\RB$ are $C$-almost minimizing in $N \cap U$. The $C$-almost minimizing property can be used to show the locally uniform bound
    \begin{equation*}
        \sup_i M_W(\LB \overline E_i\RB) + \sup_i M_W(\partial\LB \overline E_i\RB) \leq C_W < \infty.
    \end{equation*}
    The compactness theorem for integer multiplicity currents with locally bounded mass (see \cite[Chapter $6$, Theorem $3.11$]{SimonGMT}) implies that upon passing to a subsequence we have $\partial\LB \overline E_i\RB \rightharpoonup T$ for some $T \in D_n(U)$ of integer multiplicity. Since each $\partial\LB \overline E_i\RB$ is supported in $N \cap U$, so is $T$. By \cite[Lemma A$.2$]{EichmairPlateau} it follows that there is a relatively open set $\overline E \subset N$ of locally finite perimeter in $N$ such that $T = \partial \LB \overline E\RB$ is $C$-almost minimizing in $N \cap U$ and furthermore, after passing to another subsequence, we have $\partial \LB\overline E_i\RB \rightharpoonup \partial \LB \overline E\RB$, $\chi_{\overline E_i} \to \chi_{\overline E}$ in $\text{BV}_\text{loc}(N)$ and the underlying varifolds of $\partial \LB \overline E_i \RB$ converge to the underlying varifold of $T = \partial\LB \overline E\RB$ in the sense of Radon measures.

    By \cite[Theorem A$.1$]{EichmairPlateau} we know that $\partial \overline E \subset \mathbb R^{n+l}$ is $C^{1,\gamma}_\text{loc}$ for some $\gamma \in (0,1)$, except when $n = 7$ in which case $\partial \overline E$ may have isolated singularities. Let $p$ be an isolated singularity and write $\partial \overline E$ as a graph over its tangent space close to $p$ with $\varphi: B_r(0)\to N$ the graphing function. The completion in $C^0(T_p(\partial \overline E))$ of the family $\mathcal F = \{\varphi_{x,\lambda}(y) = \varphi(x+\lambda y)\}$ can be checked to satisfy \cite[Assumption A.$1$]{SimonGMT}, \cite[Assumption A.$2$]{SimonGMT} and \cite[Assumption A.$3$]{SimonGMT}. Since the singularity of $\varphi$ is isolated it follows that we may let $\phi = \lim_{k \to \infty}\varphi_{x_0,\lambda_k} \in \mathcal F$ for some $\lambda_k \to 0$ so that $\phi$ graphs a minimal surface contained in $T_pN$ with codomension $1$ in $T_pN$ and with an isolated singularity at the origin. After identifying $T_pN$ with $\mathbb R^{8}$ we obtain a graphical, conical minimal surface in $\mathbb R^8$ with an isolated singularity at the origin. The rest of the argument can now be carried out as in Eichmair \cite[Remark $4.1$]{EichmairPlateau} to show that the singular point $p$ did not exist in the first place. Hence $\partial \overline E$ is $C^{1,\gamma}_\text{loc}$.

    Fixing $p \in \partial \overline E$, the approximate monotonicity formula in \cite[Chapter $4$, Theorem $3.17$]{SimonGMT} can be used together with the uniform bound on $\sup_i|F_i| \leq C < \infty$ to show that there exists an index $i(p) > 0$ such that the conditions of Allard's regularity theorem \cite[Chapter $5$, Theorem $5.2$]{SimonGMT} hold with all constants independent of $i \geq i(p)$. Hence, there is a fixed radius $r = r(p) > 0$ such that for $i \geq i(p)$, $\partial \overline E \cap B_r(p)$ and each $\partial \overline E_i \cap B_r(p)$ can be written as a  $C^{1,\gamma}$ graph over the tangent space $T_p(\partial \overline E)$. Moreover, $C^{1,\gamma}$ norms of the graphing functions are uniformly bounded by a constant independent of $i$. 

    Since $\psi^{-1}$ is $C^2$ we may define $E := \psi^{-1}(\overline E)$ and by the above we know that near each point $p \in \partial E$, both $\partial E$ and $\partial E_i$, for all $i$ large enough, can be represented as graphs of functions with uniformly bounded $C^{1,\gamma}$ norms over the tangent space $T_p(\partial E)$. These facts combined with standard elliptic estimates for the prescribed mean curvature equations, cf. \cite[Proposition $5.4$]{SakovichJang} and \cite[Appendix B]{CederbaumSakovich}, imply the desired $C^{3,\alpha}_\text{loc}$ regularity and convergence of the graphing functions.
\end{proof}







\newpage
\bibliographystyle{abbrv}
\bibliography{ref}

@article {AgostinianiMazzieriOronzio,
    AUTHOR = {Agostiniani, Virginia and Mazzieri, Lorenzo and Oronzio, Francesca},
     TITLE = {A {G}reen's function proof of the positive mass theorem},
   JOURNAL = {Comm. Math. Phys.},
  FJOURNAL = {Communications in Mathematical Physics},
    VOLUME = {405},
      YEAR = {2024},
    NUMBER = {2},
     PAGES = {Paper No. 54, 23},
      ISSN = {0010-3616,1432-0916},
   MRCLASS = {53C20 (31C12)},
  MRNUMBER = {4707045},
       DOI = {10.1007/s00220-024-04941-8},
       URL = {https://doi.org/10.1007/s00220-024-04941-8},
}

@article {AnderssonCaiGalloway,
    AUTHOR = {Andersson, Lars and Cai, Mingliang and Galloway, Gregory J.},
     TITLE = {Rigidity and positivity of mass for asymptotically hyperbolic manifolds},
   JOURNAL = {Ann. Henri Poincar\'{e}},
  FJOURNAL = {Annales Henri Poincar\'{e}. A Journal of Theoretical and Mathematical Physics},
    VOLUME = {9},
      YEAR = {2008},
    NUMBER = {1},
     PAGES = {1--33},
      ISSN = {1424-0637,1424-0661},
   MRCLASS = {53C24 (53C20 53C21 53C80)},
  MRNUMBER = {2389888},
MRREVIEWER = {Alan\ D.\ Rendall},
       DOI = {10.1007/s00023-007-0348-2},
       URL = {https://doi.org/10.1007/s00023-007-0348-2},
}

@incollection {AnderssonEichmairMetzger,
    AUTHOR = {Andersson, Lars and Eichmair, Michael and Metzger, Jan},
    TITLE = {Jang's equation and its applications to marginally trapped surfaces},
    BOOKTITLE = {Complex analysis and dynamical systems {IV}. {P}art 2},
    SERIES = {Contemporary Mathematics},
    VOLUME = {554},
     PAGES = {13--45},
 PUBLISHER = {American Mathematical Sociecy, Providence, RI},
      YEAR = {2011},
      ISBN = {978-0-8218-5197-5},
   MRCLASS = {53C21 (35R01 53A10 53C80)},
  MRNUMBER = {2884392},
MRREVIEWER = {Gabjin\ Yun},
       DOI = {10.1090/conm/554/10958},
       URL = {https://doi.org/10.1090/conm/554/10958},
}

@article {BrayKhuri,
    AUTHOR = {Bray, Hubert L. and Khuri, Marcus A.},
     TITLE = {A {J}ang equation approach to the {P}enrose inequality},
   JOURNAL = {Discrete Contin. Dyn. Syst.},
  FJOURNAL = {Discrete and Continuous Dynamical Systems. Series A},
    VOLUME = {27},
      YEAR = {2010},
    NUMBER = {2},
     PAGES = {741--766},
      ISSN = {1078-0947,1553-5231},
   MRCLASS = {53C21 (35Q76 53C80 83C40 83C57 83C75)},
  MRNUMBER = {2600688},
MRREVIEWER = {Jos\'{e}\ Nat\'{a}rio},
       DOI = {10.3934/dcds.2010.27.741},
       URL = {https://doi.org/10.3934/dcds.2010.27.741},
}

@article {BrayKhuriKazarasStern,
    AUTHOR = {Bray, Hubert L. and Kazaras, Demetre P. and Khuri, Marcus A. and Stern, Daniel L.},
     TITLE = {Harmonic functions and the mass of 3-dimensional asymptotically flat {R}iemannian manifolds},
   JOURNAL = {J. Geom. Anal.},
  FJOURNAL = {Journal of Geometric Analysis},
    VOLUME = {32},
      YEAR = {2022},
    NUMBER = {6},
     PAGES = {Paper No. 184, 29},
      ISSN = {1050-6926,1559-002X},
   MRCLASS = {53C20 (53C21 58J90 58Z05 83C40 83C57)},
  MRNUMBER = {4411747},
MRREVIEWER = {Alfonso\ Garc\'{\i}a-Parrado},
       DOI = {10.1007/s12220-022-00924-0},
       URL = {https://doi.org/10.1007/s12220-022-00924-0},
}

@article {CederbaumSakovich,
    AUTHOR = {Cederbaum, Carla and Sakovich, Anna},
     TITLE = {On center of mass and foliations by constant spacetime mean curvature surfaces for isolated systems in general relativity},
   JOURNAL = {Calc. Var. Partial Differential Equations},
  FJOURNAL = {Calculus of Variations and Partial Differential Equations},
    VOLUME = {60},
      YEAR = {2021},
    NUMBER = {6},
     PAGES = {Paper No. 214, 57},
      ISSN = {0944-2669,1432-0835},
   MRCLASS = {53C21 (58J37 83C05)},
  MRNUMBER = {4305436},
MRREVIEWER = {Ettore\ Minguzzi},
       DOI = {10.1007/s00526-021-02060-z},
       URL = {https://doi.org/10.1007/s00526-021-02060-z},
}

@article {ChaKhuri15,
    AUTHOR = {Cha, Ye Sle and Khuri, Marcus A.},
     TITLE = {Deformations of charged axially symmetric initial data and the mass-angular momentum-charge inequality},
   JOURNAL = {Ann. Henri Poincar\'e},
  FJOURNAL = {Annales Henri Poincar\'e. A Journal of Theoretical and Mathematical Physics},
    VOLUME = {16},
      YEAR = {2015},
    NUMBER = {12},
     PAGES = {2881--2918},
      ISSN = {1424-0637,1424-0661},
   MRCLASS = {83C22},
  MRNUMBER = {3416872},
MRREVIEWER = {Yong\ Wang},
       DOI = {10.1007/s00023-014-0378-5},
       URL = {https://doi.org/10.1007/s00023-014-0378-5},
}

@article {ChaKhuri18,
    AUTHOR = {Cha, Ye Sle and Khuri, Marcus},
     TITLE = {Transformations of asymptotically {A}d{S} hyperbolic initial data and associated geometric inequalities},
   JOURNAL = {Gen. Relativity Gravitation},
  FJOURNAL = {General Relativity and Gravitation},
    VOLUME = {50},
      YEAR = {2018},
    NUMBER = {1},
     PAGES = {Paper No. 3, 48},
      ISSN = {0001-7701,1572-9532},
   MRCLASS = {83C22},
  MRNUMBER = {3736107},
MRREVIEWER = {Yong\ Wang},
       DOI = {10.1007/s10714-017-2323-7},
       URL = {https://doi.org/10.1007/s10714-017-2323-7},
}

@article {ChaKhuriSakovich,
    AUTHOR = {Cha, Ye Sle and Khuri, Marcus and Sakovich, Anna},
     TITLE = {Reduction arguments for geometric inequalities associated with asymptotically hyperboloidal slices},
   JOURNAL = {Classical and Quantum Gravity},
  FJOURNAL = {Classical and Quantum Gravity},
    VOLUME = {33},
      YEAR = {2016},
    NUMBER = {3},
     PAGES = {035009, 33},
      ISSN = {0264-9381,1361-6382},
   MRCLASS = {83C05 (53C80 83C22 83C57)},
  MRNUMBER = {3529561},
MRREVIEWER = {Janusz\ Garecki},
       DOI = {10.1088/0264-9381/33/3/035009},
       URL = {https://doi.org/10.1088/0264-9381/33/3/035009},
}

@misc{ChruscielDelayHyp,
      title={The hyperbolic positive energy theorem}, 
      author={Piotr T. Chruściel and Erwann Delay},
      year={2019},
      eprint={1901.05263},
      archivePrefix={arXiv},
      primaryClass={math.DG},
      note = {https://arxiv.org/abs/1901.05263}
}

@article {ChruscielGalloway,
    AUTHOR = {Chru\'{s}ciel, Piotr T. and Galloway, Gregory J.},
     TITLE = {Positive mass theorems for asymptotically hyperbolic
              {R}iemannian manifolds with boundary},
   JOURNAL = {Classical and Quantum Gravity},
    VOLUME = {38},
      YEAR = {2021},
    NUMBER = {23},
     PAGES = {Paper No. 237001, 6},
      ISSN = {0264-9381,1361-6382},
   MRCLASS = {83C40 (53C20 83C05 83C30)},
  MRNUMBER = {4353388},
MRREVIEWER = {Veselin\ T.\ Videv},
       DOI = {10.1088/1361-6382/ac1fd1},
       URL = {https://doi.org/10.1088/1361-6382/ac1fd1},
}

@article {ChruscielGallowayNguyenPaetz,
    AUTHOR = {Chru\'{s}ciel, Piotr T. and Galloway, Gregory J. and Nguyen,
              Luc and Paetz, Tim-Torben},
     TITLE = {On the mass aspect function and positive energy theorems for
              asymptotically hyperbolic manifolds},
   JOURNAL = {Classical Quantum Gravity},
  FJOURNAL = {Classical and Quantum Gravity},
    VOLUME = {35},
      YEAR = {2018},
    NUMBER = {11},
     PAGES = {115015, 38},
      ISSN = {0264-9381,1361-6382},
   MRCLASS = {53C20 (53C21)},
MRREVIEWER = {Man\ Chun\ Leung},
       DOI = {10.1088/1361-6382/aabed1},
       URL = {https://doi.org/10.1088/1361-6382/aabed1},
}

@article {ChruscielHerzlich,
    AUTHOR = {Chru\'{s}ciel, Piotr T. and Herzlich, Marc},
     TITLE = {The mass of asymptotically hyperbolic {R}iemannian manifolds},
   JOURNAL = {Pacific J. Math.},
  FJOURNAL = {Pacific Journal of Mathematics},
    VOLUME = {212},
      YEAR = {2003},
    NUMBER = {2},
     PAGES = {231--264},
      ISSN = {0030-8730,1945-5844},
   MRCLASS = {53C21 (83C05 83C30)},
  MRNUMBER = {2038048},
MRREVIEWER = {Simonetta\ Frittelli},
       DOI = {10.2140/pjm.2003.212.231},
       URL = {https://doi.org/10.2140/pjm.2003.212.231},
}

@article {ChruscielJezierskiLeski,
    AUTHOR = {Chru\'{s}ciel, Piotr T. and Jezierski, Jacek and \L\c{e}ski, Szymon},
     TITLE = {The {T}rautman-{B}ondi mass of hyperboloidal initial data sets},
   JOURNAL = {Adv. Theor. Math. Phys.},
  FJOURNAL = {Advances in Theoretical and Mathematical Physics},
    VOLUME = {8},
      YEAR = {2004},
    NUMBER = {1},
     PAGES = {83--139},
      ISSN = {1095-0761,1095-0753},
   MRCLASS = {83C30 (83C05)},
  MRNUMBER = {2086675},
MRREVIEWER = {Alexander\ N.\ Petrov},
       URL = {http://projecteuclid.org/euclid.atmp/1091475314},
}

@article {ChruscielNagy,
    AUTHOR = {Chru\'{s}ciel, Piotr T. and Nagy, Gabriel},
     TITLE = {The mass of spacelike hypersurfaces in asymptotically anti-de {S}itter space-times},
   JOURNAL = {Adv. Theor. Math. Phys.},
  FJOURNAL = {Advances in Theoretical and Mathematical Physics},
    VOLUME = {5},
      YEAR = {2001},
    NUMBER = {4},
     PAGES = {697--754},
      ISSN = {1095-0761,1095-0753},
   MRCLASS = {53C50 (53C40 83C05 83C30)},
  MRNUMBER = {1926293},
MRREVIEWER = {Lars\ \AA ke\ Andersson},
       DOI = {10.4310/ATMP.2001.v5.n4.a3},
       URL = {https://doi.org/10.4310/ATMP.2001.v5.n4.a3},
}

@misc{CortierDahlGicquaud,
  doi = {10.48550/ARXIV.1603.07952},
  note = {https://arxiv.org/abs/1603.07952},
  author = {Cortier, Julien and Dahl, Mattias and Gicquaud, Romain},
  keywords = {Differential Geometry (math.DG), General Relativity and Quantum Cosmology (gr-qc), Representation Theory (math.RT), FOS: Mathematics, FOS: Mathematics, FOS: Physical sciences, FOS: Physical sciences, 53C21 (Primary) 83C05, 83C30 (Secondary)},
  title = {Mass-like invariants for asymptotically hyperbolic metrics},
  publisher = {arXiv},
  year = {2016},
  copyright = {arXiv.org perpetual, non-exclusive license}
}

@article {DuzaarSteffen,
    AUTHOR = {Duzaar, Frank and Steffen, Klaus},
     TITLE = {{$\lambda$} minimizing currents},
   JOURNAL = {Manuscripta Math.},
  FJOURNAL = {Manuscripta Mathematica},
    VOLUME = {80},
      YEAR = {1993},
    NUMBER = {4},
     PAGES = {403--447},
      ISSN = {0025-2611,1432-1785},
   MRCLASS = {49Q15},
  MRNUMBER = {1243155},
       DOI = {10.1007/BF03026561},
       URL = {https://doi.org/10.1007/BF03026561},
}

@article {EichmairReduction,
    AUTHOR = {Eichmair, Michael},
     TITLE = {The {J}ang equation reduction of the spacetime positive energy theorem in dimensions less than eight},
   JOURNAL = {Comm. Math. Phys.},
  FJOURNAL = {Communications in Mathematical Physics},
    VOLUME = {319},
      YEAR = {2013},
    NUMBER = {3},
     PAGES = {575--593},
      ISSN = {0010-3616,1432-0916},
   MRCLASS = {58J99 (53A99 53C42 58A25)},
  MRNUMBER = {3040369},
MRREVIEWER = {Mircea\ Crasmareanu},
       DOI = {10.1007/s00220-013-1700-7},
       URL = {https://doi.org/10.1007/s00220-013-1700-7},
}

@article {EichmairHuangLeeSchoen,
    AUTHOR = {Eichmair, Michael and Huang, Lan-Hsuan and Lee, Dan A. and Schoen, Richard},
     TITLE = {The spacetime positive mass theorem in dimensions less than eight},
   JOURNAL = {J. Eur. Math. Soc. (JEMS)},
  FJOURNAL = {Journal of the European Mathematical Society (JEMS)},
    VOLUME = {18},
      YEAR = {2016},
    NUMBER = {1},
     PAGES = {83--121},
      ISSN = {1435-9855,1435-9863},
   MRCLASS = {53C21 (53C20 83C05)},
  MRNUMBER = {3438380},
MRREVIEWER = {Seungsu\ Hwang},
       DOI = {10.4171/JEMS/584},
       URL = {https://doi.org/10.4171/JEMS/584},
}

@article {EichmairPlateau,
    AUTHOR = {Eichmair, Michael},
     TITLE = {The {P}lateau problem for marginally outer trapped surfaces},
   JOURNAL = {J. Differential Geom.},
  FJOURNAL = {Journal of Differential Geometry},
    VOLUME = {83},
      YEAR = {2009},
    NUMBER = {3},
     PAGES = {551--583},
      ISSN = {0022-040X,1945-743X},
   MRCLASS = {53C21 (49Q05 53A10 53C50)},
  MRNUMBER = {2581357},
MRREVIEWER = {Gabjin\ Yun},
       URL = {http://projecteuclid.org/euclid.jdg/1264601035},
}

@article {EichmairMetzger,
    AUTHOR = {Eichmair, Michael and Metzger, Jan},
     TITLE = {Jenkins-{S}errin-type results for the {J}ang equation},
   JOURNAL = {J. Differential Geom.},
  FJOURNAL = {Journal of Differential Geometry},
    VOLUME = {102},
      YEAR = {2016},
    NUMBER = {2},
     PAGES = {207--242},
      ISSN = {0022-040X,1945-743X},
   MRCLASS = {53C42 (83C05)},
  MRNUMBER = {3454546},
MRREVIEWER = {Xin\ Zhou},
       URL = {http://projecteuclid.org/euclid.jdg/1453910454},
}

@article {Gicquaud,
    AUTHOR = {Gicquaud, Romain},
     TITLE = {De l'\'{e}quation de prescription de courbure scalaire aux \'{e}quations de contrainte en relativit\'{e} g\'{e}n\'{e}rale sur une vari\'{e}t\'{e} asymptotiquement hyperbolique},
   JOURNAL = {J. Math. Pures Appl. (9)},
  FJOURNAL = {Journal de Math\'{e}matiques Pures et Appliqu\'{e}es. Neuvi\`eme S\'{e}rie},
    VOLUME = {94},
      YEAR = {2010},
    NUMBER = {2},
     PAGES = {200--227},
      ISSN = {0021-7824,1776-3371},
   MRCLASS = {53C21 (35J60 35J61 35R01 58J60)},
  MRNUMBER = {2665418},
MRREVIEWER = {David\ L.\ Finn},
       DOI = {10.1016/j.matpur.2010.03.011},
       URL = {https://doi.org/10.1016/j.matpur.2010.03.011},
}

@book {GilbargTrudinger,
    AUTHOR = {Gilbarg, David and Trudinger, Neil S.},
     TITLE = {Elliptic partial differential equations of second order},
    SERIES = {Classics in Mathematics},
      NOTE = {Reprint of the 1998 edition},
 PUBLISHER = {Springer-Verlag, Berlin},
      YEAR = {2001},
     PAGES = {xiv+517},
      ISBN = {3-540-41160-7},
   MRCLASS = {35-02 (35Jxx)},
  MRNUMBER = {1814364},
}

@book {Gromov86,
    AUTHOR = {Gromov, Mikhael},
     TITLE = {Partial differential relations},
    SERIES = {Ergebnisse der Mathematik und ihrer Grenzgebiete (3) [Results in Mathematics and Related Areas (3)]},
    VOLUME = {9},
 PUBLISHER = {Springer-Verlag, Berlin},
      YEAR = {1986},
     PAGES = {x+363},
      ISBN = {3-540-12177-3},
   MRCLASS = {58G99 (35A99 35B99 53C42 58-02)},
  MRNUMBER = {864505},
MRREVIEWER = {Hung-Hsi\ Wu},
       DOI = {10.1007/978-3-662-02267-2},
       URL = {https://doi.org/10.1007/978-3-662-02267-2},
}

@article {GromovOverview,
    AUTHOR = {Gromov, Misha},
     TITLE = {Geometric, algebraic, and analytic descendants of {N}ash isometric embedding theorems},
   JOURNAL = {Bull. Amer. Math. Soc. (N.S.)},
  FJOURNAL = {American Mathematical Society. Bulletin. New Series},
    VOLUME = {54},
      YEAR = {2017},
    NUMBER = {2},
     PAGES = {173--245},
      ISSN = {0273-0979,1088-9485},
   MRCLASS = {58D10 (58-02 58Jxx)},
  MRNUMBER = {3619725},
MRREVIEWER = {Fr\'ed\'eric\ Robert},
       DOI = {10.1090/bull/1551},
       URL = {https://doi.org/10.1090/bull/1551},
}

@article {HanKhuri,
    AUTHOR = {Han, Qing and Khuri, Marcus},
     TITLE = {Existence and blow-up behavior for solutions of the
              generalized {J}ang equation},
   JOURNAL = {Comm. Partial Differential Equations},
  FJOURNAL = {Communications in Partial Differential Equations},
    VOLUME = {38},
      YEAR = {2013},
    NUMBER = {12},
     PAGES = {2199--2237},
      ISSN = {0360-5302,1532-4133},
   MRCLASS = {35Q75 (35B44 35B45 35J70 58J32)},
  MRNUMBER = {3169775},
MRREVIEWER = {Willie\ W.\ Wong},
       DOI = {10.1080/03605302.2013.837919},
       URL = {https://doi.org/10.1080/03605302.2013.837919},
}

@article {HirschKazarasKhuri,
    AUTHOR = {Hirsch, Sven and Kazaras, Demetre and Khuri, Marcus},
     TITLE = {Spacetime harmonic functions and the mass of 3-dimensional asymptotically flat initial data for the {E}instein equations},
   JOURNAL = {J. Differential Geom.},
  FJOURNAL = {Journal of Differential Geometry},
    VOLUME = {122},
      YEAR = {2022},
    NUMBER = {2},
     PAGES = {223--258},
      ISSN = {0022-040X,1945-743X},
   MRCLASS = {53C20 (53C21 58E20 83C40)},
  MRNUMBER = {4516940},
       DOI = {10.4310/jdg/1669998184},
       URL = {https://doi.org/10.4310/jdg/1669998184},
}

@article {HuangLeeMartin,
    AUTHOR = {Huang, Lan-Hsuan and Jang, Hyun Chul and Martin, Daniel},
     TITLE = {Mass rigidity for hyperbolic manifolds},
   JOURNAL = {Comm. Math. Phys.},
  FJOURNAL = {Communications in Mathematical Physics},
    VOLUME = {376},
      YEAR = {2020},
    NUMBER = {3},
     PAGES = {2329--2349},
      ISSN = {0010-3616,1432-0916},
   MRCLASS = {53C20 (53C24)},
  MRNUMBER = {4104551},
MRREVIEWER = {Theodoros\ Vlachos},
       DOI = {10.1007/s00220-019-03623-0},
       URL = {https://doi.org/10.1007/s00220-019-03623-0},
}

@article {Jang,
    AUTHOR = {Jang, Pong Soo},
     TITLE = {On the positivity of energy in general relativity},
   JOURNAL = {J. Math. Phys.},
  FJOURNAL = {Journal of Mathematical Physics},
    VOLUME = {19},
      YEAR = {1978},
    NUMBER = {5},
     PAGES = {1152--1155},
      ISSN = {0022-2488,1089-7658},
   MRCLASS = {83C20 (58D99)},
  MRNUMBER = {488515},
MRREVIEWER = {W.\ Israel},
       DOI = {10.1063/1.523776},
       URL = {https://doi.org/10.1063/1.523776},
}

@misc{LesourdUngerYau,
      title={The {P}ositive {M}ass {T}heorem with {A}rbitrary {E}nds}, 
      note={https://arxiv.org/abs/2103.02744},
      author={Martin Lesourd and Ryan Unger and Shing-Tung Yau},
      year={2021},
      eprint={2103.02744},
      archivePrefix={arXiv},
      primaryClass={math.DG}
}

@misc{Lundberg,
      title={On {J}ang's equation and the {P}ositive {M}ass {T}heorem for asymptotically hyperbolic initial data sets with dimensions above three and below eight}, 
      author={Lundberg, David},
      year={2023},
      eprint={2309.11330},
      archivePrefix={arXiv},
      primaryClass={math.DG},
      note = {https://arxiv.org/abs/2309.11330}
}

@article {MalecMurchadha2004,
    AUTHOR = {Malec, Edward and \'{O} Murchadha, Niall},
     TITLE = {The {J}ang equation, apparent horizons and the {P}enrose inequality},
   JOURNAL = {Classical Quantum Gravity},
  FJOURNAL = {Classical and Quantum Gravity},
    VOLUME = {21},
      YEAR = {2004},
    NUMBER = {24},
     PAGES = {5777--5787},
      ISSN = {0264-9381,1361-6382},
   MRCLASS = {83C30 (83C05)},
  MRNUMBER = {2107339},
MRREVIEWER = {Yakov\ Itin},
       DOI = {10.1088/0264-9381/21/24/007},
       URL = {https://doi.org/10.1088/0264-9381/21/24/007},
}

@article {MetzgerBlowUp,
    AUTHOR = {Metzger, Jan},
     TITLE = {Blowup of {J}ang's equation at outermost marginally trapped surfaces},
   JOURNAL = {Comm. Math. Phys.},
  FJOURNAL = {Communications in Mathematical Physics},
    VOLUME = {294},
      YEAR = {2010},
    NUMBER = {1},
     PAGES = {61--72},
      ISSN = {0010-3616,1432-0916},
   MRCLASS = {53C50 (53C80 83C05)},
  MRNUMBER = {2575475},
MRREVIEWER = {Robert\ J.\ Low},
       DOI = {10.1007/s00220-009-0934-x},
       URL = {https://doi.org/10.1007/s00220-009-0934-x},
}

@article {Michel,
    AUTHOR = {Michel, Benoît.},
     TITLE = {Geometric invariance of mass-like asymptotic invariants},
   JOURNAL = {J. Math. Phys.},
  FJOURNAL = {Journal of Mathematical Physics},
    VOLUME = {52},
      YEAR = {2011},
    NUMBER = {5},
     PAGES = {052504, 14},
      ISSN = {0022-2488,1089-7658},
   MRCLASS = {83C30 (58D15)},
  MRNUMBER = {2839077},
MRREVIEWER = {Hans-Peter\ K\"{u}nzle},
       DOI = {10.1063/1.3579137},
       URL = {https://doi.org/10.1063/1.3579137},
}

@article {SakovichJang,
    AUTHOR = {Sakovich, Anna},
     TITLE = {The {J}ang equation and the positive mass theorem in the
              asymptotically hyperbolic setting},
   JOURNAL = {Comm. Math. Phys.},
  FJOURNAL = {Communications in Mathematical Physics},
    VOLUME = {386},
      YEAR = {2021},
    NUMBER = {2},
     PAGES = {903--973},
      ISSN = {0010-3616,1432-0916},
   MRCLASS = {53C20},
  MRNUMBER = {4294283},
MRREVIEWER = {Xiao\ Zhang},
       DOI = {10.1007/s00220-021-04083-1},
       URL = {https://doi.org/10.1007/s00220-021-04083-1},
}

@article{SimonGMT,
  title={Introduction to geometric measure theory},
  author={Simon, Leon},
  journal={Tsinghua Lectures},
  volume={2},
  number={2},
  pages={3--1},
  year={2014}
}

@article{SasakiRef,
    author = {Shigeo Sasaki},
    title = {{On the differential geometry of tangent bundles of Riemannian manifolds}},
    volume = {10},
    journal = {Tohoku Mathematical Journal},
    number = {3},
    publisher = {Tohoku University, Mathematical Institute},
    pages = {338 -- 354},
    year = {1958},
    doi = {10.2748/tmj/1178244668},
    URL = {https://doi.org/10.2748/tmj/1178244668}
}

@book {SimonGMTOld,
    AUTHOR = {Simon, Leon},
     TITLE = {Lectures on geometric measure theory},
    SERIES = {Proceedings of the Centre for Mathematical Analysis, Australian National University},
    VOLUME = {3},
 PUBLISHER = {Australian National University, Centre for Mathematical Analysis, Canberra},
      YEAR = {1983},
     PAGES = {vii+272},
      ISBN = {0-86784-429-9},
   MRCLASS = {49-01 (28A75 49F20)},
  MRNUMBER = {756417},
MRREVIEWER = {J.\ S.\ Joel},
}

@article {Spruck,
    AUTHOR = {Spruck, Joel},
     TITLE = {Interior gradient estimates and existence theorems for constant mean curvature graphs in {$M^n\times\mathbb R$}},
   JOURNAL = {Pure Appl. Math. Q.},
  FJOURNAL = {Pure and Applied Mathematics Quarterly},
    VOLUME = {3},
      YEAR = {2007},
    NUMBER = {3},
     PAGES = {785--800},
      ISSN = {1558-8599,1558-8602},
   MRCLASS = {58E12 (53C42)},
  MRNUMBER = {2351645},
MRREVIEWER = {Jos\'{e}\ Antonio\ G\'{a}lvez},
       DOI = {10.4310/PAMQ.2007.v3.n3.a6},
       URL = {https://doi.org/10.4310/PAMQ.2007.v3.n3.a6},
}

@article {SchoenYau1,
    AUTHOR = {Schoen, Richard and Yau, Shing Tung},
     TITLE = {On the proof of the positive mass conjecture in general relativity},
   JOURNAL = {Comm. Math. Phys.},
  FJOURNAL = {Communications in Mathematical Physics},
    VOLUME = {65},
      YEAR = {1979},
    NUMBER = {1},
     PAGES = {45--76},
      ISSN = {0010-3616,1432-0916},
   MRCLASS = {83C99 (53C20 58E20)},
  MRNUMBER = {526976},
MRREVIEWER = {J.\ L.\ Kazdan},
       URL = {http://projecteuclid.org/euclid.cmp/1103904790},
}

@article {SchoenYau2,
    AUTHOR = {Schoen, Richard and Yau, Shing Tung},
     TITLE = {Proof of the positive mass theorem. {II}},
   JOURNAL = {Comm. Math. Phys.},
  FJOURNAL = {Communications in Mathematical Physics},
    VOLUME = {79},
      YEAR = {1981},
    NUMBER = {2},
     PAGES = {231--260},
      ISSN = {0010-3616,1432-0916},
   MRCLASS = {83C99 (35J60 53A10 53C50 58G40 83C05)},
  MRNUMBER = {612249},
MRREVIEWER = {J.\ L.\ Kazdan},
       URL = {http://projecteuclid.org/euclid.cmp/1103908964},
}

@article {Wang,
    AUTHOR = {Wang, Xiaodong},
     TITLE = {The mass of asymptotically hyperbolic manifolds},
   JOURNAL = {J. Differential Geom.},
  FJOURNAL = {Journal of Differential Geometry},
    VOLUME = {57},
      YEAR = {2001},
    NUMBER = {2},
     PAGES = {273--299},
      ISSN = {0022-040X,1945-743X},
   MRCLASS = {53C20 (53C21)},
  MRNUMBER = {1879228},
MRREVIEWER = {Lars\ \AA ke\ Andersson},
       URL = {http://projecteuclid.org/euclid.jdg/1090348112},
}

@article {Witten,
    AUTHOR = {Witten, Edward},
     TITLE = {A new proof of the positive energy theorem},
   JOURNAL = {Comm. Math. Phys.},
  FJOURNAL = {Communications in Mathematical Physics},
    VOLUME = {80},
      YEAR = {1981},
    NUMBER = {3},
     PAGES = {381--402},
      ISSN = {0010-3616,1432-0916},
   MRCLASS = {83C99 (58G25)},
  MRNUMBER = {626707},
MRREVIEWER = {Andrzej\ Trautman},
       URL = {http://projecteuclid.org/euclid.cmp/1103919981},
}

@misc {YuBlowup,
    doi = {10.7916/D8-AVNQ-G588},
    url = {https://academiccommons.columbia.edu/doi/10.7916/d8-avnq-g588},
    author = {Yu, Wenhua},
    keywords = {Mathematics, FOS: Mathematics, Inequalities (Mathematics), Blowing up (Algebraic geometry)},
    title = {Blowup rate control for solution of Jang's equation and its application on Penrose inequality},
    publisher = {Columbia University},
    year = {2019},
    note = {https://arxiv.org/abs/1906.08841}
}

@article {Zhang,
    AUTHOR = {Zhang, Xiao},
     TITLE = {A definition of total energy-momenta and the positive mass theorem on asymptotically hyperbolic 3-manifolds. {I}},
   JOURNAL = {Comm. Math. Phys.},
  FJOURNAL = {Communications in Mathematical Physics},
    VOLUME = {249},
      YEAR = {2004},
    NUMBER = {3},
     PAGES = {529--548},
      ISSN = {0010-3616,1432-0916},
   MRCLASS = {83C30 (83C05)},
  MRNUMBER = {2084006},
       DOI = {10.1007/s00220-004-1056-0},
       URL = {https://doi.org/10.1007/s00220-004-1056-0},
}

\end{document}